\let\oldtocsection=\tocsection
\let\oldtocsubsection=\tocsubsection
\let\oldtocsubsubsection=\tocsubsubsection
\renewcommand{\tocsection}[2]{\hspace{0em}\oldtocsection{#1}{#2}}
\renewcommand{\tocsubsection}[2]{\hspace{2em}\oldtocsubsection{#1}{#2}}
\renewcommand{\tocsubsubsection}[2]{\hspace{4em}\oldtocsubsubsection{#1}{#2}}
\newcommand\ul{\underline}
\def\k{\kappa}
\def\s{\sigma}
\def\t{\tau}
\def\i{\iota}
\def\G{\Gamma}
\def\cX{{\mathcal X}}
\def\cY{{\mathcal Y}}
\newtheorem{dfn}{Definition}[section]
\newtheorem{lem}[dfn]{Lemma}
\newtheorem{prp}[dfn]{Proposition}
\newtheorem{thm}[dfn]{Theorem}
\newtheorem{rmk}[dfn]{Remark}
\newtheorem{ex}[dfn]{Example}
\author{Gunnar Carlsson, Benjamin Filippenko}
\title{Persistent homology of the sum metric}
\def\blfootnote{\xdef\@thefnmark{}\@footnotetext}
\begin{document}

\begin{abstract}
Given finite metric spaces $(X, d_X)$ and $(Y, d_Y)$, we investigate the persistent homology $PH_*(X \times Y)$ of the Cartesian product $X \times Y$ equipped with the sum metric $d_X + d_Y$. Interpreting persistent homology as a module over a polynomial ring, one might expect the usual K\"unneth short exact sequence to hold. We prove that it holds for $PH_0$ and $PH_1$, and we illustrate with the Hamming cube $\{0,1\}^k$ that it fails for $PH_n,\,\, n \geq 2$. For $n = 2$, the prediction for $PH_2(X \times Y)$ from the expected K\"unneth short exact sequence has a natural surjection onto $PH_2(X \times Y)$. We compute the nontrivial kernel of this surjection for the splitting of Hamming cubes $\{0,1\}^k = \{0,1\}^{k-1} \times \{0,1\}$. For all $n \geq 0$, the interleaving distance between the prediction for $PH_n(X \times Y)$ and the true persistent homology is bounded above by the minimum of the diameters of $X$ and $Y$. As preliminary results of independent interest, we establish an algebraic K\"unneth formula for simplicial modules over the ring $\k[\mathbb{R}_+]$ of polynomials with coefficients in a field $\k$ and exponents in $\mathbb{R}_+ = [0,\infty)$, as well as a K\"unneth formula for the persistent homology of $\mathbb{R}_+$-filtered simplicial sets -- both of these K\"unneth formulas hold in all homological dimensions $n \geq 0$.
\end{abstract}

\maketitle

\tableofcontents

\blfootnote{2010 Mathematics Subject Classification. Primary 55N99, 55U25.}
\blfootnote{This material is based upon work supported by the National Science Foundation under Award No.\ 1708916 and No.\ 1903023.}
\newpage

\section{Introduction} \label{sec:kunnethintroduction}

Topological Data Analysis (TDA) aims to understand the topology of an ambient space from a finite sample; see \cite{carlsson2014topological} for an overview of the field. We assume that the topology of the ambient space is induced by a metric $d$, making the finite sample $X$ into a finite metric space $(X,d)$. In applications, the ambient space is often embedded in $\mathbb{R}^n$ and various metrics $d$ on $\mathbb{R}^n$ can be useful.

A central tool in TDA is the \emph{persistent homology} $PH_*(X)$ of the Vietoris-Rips complex associated to a finite metric space $(X,d)$. Persistent homology provides approximations of the homology of the ambient space from which $X$ was sampled, and it does so at all scales $t \geq 0$. The scale parameter $t$ takes all values $t \in \mathbb{R}_+ = [0,\infty)$, and for each fixed value it upper bounds the distances in $X$ that are `seen at scale $t$.'

Many applications now use persistent homology for feature generation. This idea is used, for example, in the study of databases of molecules \cite{fullrene}.  The results of the present paper give relationships among such features, which can be expected to be useful as the applications of the method become more sophisticated. 

In this paper, given two finite metric spaces $(X, d_X)$ and $(Y, d_Y)$, we equip the Cartesian product $X \times Y$ with the sum metric $d_X + d_Y$. We are interested in computing $PH_*(X \times Y)$ in terms of $PH_*(X)$ and $PH_*(Y)$. A K\"unneth formula of this type would allow us to compute persistent homology of interesting spaces, such as the Hamming cube $I^k$ for $k \geq 1,\, I = \{0,1\}$ with the Hamming distance; see Example~\ref{ex:intro-hamming} and Section~\ref{sec:hammingcube} for a discussion of the Hamming cube. In \cite{GP}, the authors derive a K\"unneth formula for the maximum metric $d_{X \times Y} := \max\{d_X,d_Y\}$ on $X \times Y$ which holds in all homological dimensions.

Our main result (Theorem~\ref{thm:intro-maintheorem}) is that the familiar K\"unneth short exact sequence holds for the sum metric $d_{X \times Y} := d_X + d_Y$ in homological dimensions $0$ and $1$, and in dimension~$2$ it computes a module that admits a surjection onto $PH_2(X \times Y)$; see Theorem~\ref{thm:kunnethmain} for a more refined statement and a proof. Example~\ref{ex:intro-hamming} below shows that the short exact sequence fails in homological dimension $n \geq 2$. We discuss the underlying reason for this failure below (both in this introduction and in the beginning of Section~\ref{sec:metricspaces}). Here we view persistent homology $PH_*(X)$ as a module over a ring $\k[\mathbb{R}_+]$ of polynomials in a single variable with exponents in $\mathbb{R}_+ = [0,\infty)$ and coefficients in a field $\k$; see Section~\ref{sec:modules} for a study of $\k[\mathbb{R}_+]$-modules, and in particular see Remark~\ref{rmk:artinrees} for a discussion of their relationship with persistence vector spaces.

\begin{thm} \label{thm:intro-maintheorem}
Consider finite metric spaces $(X,d_X)$ and $(Y,d_Y)$ and the product space equipped with the sum metric $(X \times Y, d_X + d_Y)$. Then for $n = 0,1$ there is a short exact sequence
\begin{align*}
0 &\rightarrow \bigoplus_{i+j = n} PH_i(X) \otimes_{\k[\mathbb{R}_+]} PH_j(Y) \rightarrow PH_n(X \times Y)\\
&\rightarrow \bigoplus_{i+j =n-1} Tor_1(PH_i(X),PH_j(Y)) \rightarrow 0
\end{align*}
which is natural with respect to distance non-increasing maps $(X,d_X) \rightarrow (X',d_{X'})$ and $(Y,d_Y) \rightarrow (Y',d_{Y'})$. This sequence splits, but not naturally. Moreover, in dimension $n = 2$, the direct sum of the term in the left position (tensor product terms) and the term in the right position (Tor terms) admits a surjection onto $PH_2(X \times Y)$.
\hfill$\square$
\end{thm}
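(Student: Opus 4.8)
The plan is to prove this by passing to the filtered simplicial set level and then applying the two preliminary Künneth formulas advertised in the abstract: an algebraic Künneth formula for simplicial $\k[\R_+]$-modules, and a Künneth formula for the persistent homology of $\R_+$-filtered simplicial sets (both valid in all dimensions $n \geq 0$). The key geometric input is a comparison between the Vietoris–Rips filtration of $(X \times Y, d_X + d_Y)$ and the product filtration built from the Vietoris–Rips filtrations of $X$ and $Y$. Concretely, for each scale $t$ one has two natural simplicial sets: the Vietoris–Rips complex $VR_t(X \times Y)$, and the "join/product" complex $\bigcup_{s+u = t} VR_s(X) \times VR_u(Y)$ (interpreted via the diagonal/product of simplicial sets). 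I would first observe that because the sum metric satisfies $d_{X\times Y}((x,y),(x',y')) = d_X(x,x') + d_Y(y,y')$, a simplex $\{(x_0,y_0),\dots,(x_p,y_p)\}$ lies in $VR_t(X \times Y)$ if and only if \emph{all} pairwise sums $d_X(x_i,x_j) + d_Y(y_i,y_j) \leq t$; this is strictly weaker than asking that the $X$-coordinates form a $VR_s(X)$-simplex and the $Y$-coordinates a $VR_u(Y)$-simplex for some fixed split $s + u = t$, which is exactly the source of the discrepancy in higher dimensions.

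The heart of the argument is therefore the \emph{comparison map}. There is a natural map of $\R_+$-filtered simplicial sets from the product filtration into $VR_*(X \times Y)$, inducing on homology (via the geometric Künneth formula for filtered simplicial sets) the natural map from the predicted module $\bigoplus_{i+j=n} PH_i(X) \otimes PH_j(Y) \oplus \bigoplus_{i+j=n-1} Tor_1(PH_i(X),PH_j(Y))$ to $PH_n(X \times Y)$. For $n = 0, 1$ one shows this comparison is a filtered homotopy equivalence in the relevant range, or at least induces an isomorphism on $PH_0$ and $PH_1$, giving the short exact sequence; the point is that in low dimensions the "mixed" simplices allowed by the sum metric but not by any single product split can be filled in without changing $H_0$ or $H_1$. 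For $n = 2$ one only gets a surjection: I would argue that the comparison map on simplicial chains is surjective on $2$-chains up to boundaries — every $2$-cycle in $VR_t(X\times Y)$ is homologous to one supported on triangles coming from a product decomposition — but that it need not be injective on $H_2$, because a product $2$-cycle can become a boundary in $VR_*(X \times Y)$ once the extra sum-metric simplices are available. Surjectivity in degree $2$ would follow from a direct chain-level argument: given a triangle $\{(x_0,y_0),(x_1,y_1),(x_2,y_2)\}$ in $VR_t(X\times Y)$, subdivide it using auxiliary vertices (e.g. $(x_i, y_j)$) that are guaranteed to be within scale $t$ by the triangle inequality for $d_X + d_Y$, expressing the original triangle as a sum of triangles each of which is a product of an edge in some $VR_s(X)$ with an edge in some $VR_u(Y)$.

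The **main obstacle** I anticipate is making the degree-$2$ surjectivity argument precise and functorial: one must exhibit, naturally in $X$ and $Y$ and compatibly with the filtration (i.e. the subdividing vertices must appear at scale $\leq t$, not merely at some larger scale), a chain homotopy or explicit formula realizing every $VR_t(X \times Y)$ triangle as a combination of product triangles. The triangle inequality gives $d_X(x_i,x_j) + d_Y(y_i, y_k) \leq d_X(x_i,x_j) + d_Y(y_i,y_j) + d_Y(y_j,y_k) \le \dots$, but bounding such mixed expressions by $t$ rather than $2t$ requires care and is exactly where dimension $2$ is special (in dimension $\geq 3$ the analogous subdivision genuinely costs a factor and the surjection fails, consistent with the Hamming cube example). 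A clean way to organize this is to first prove the statement on the level of the associated graded / at each fixed scale $t$ using the ordinary topological Künneth theorem for the simplicial sets $VR_t(X), VR_t(Y), VR_t(X\times Y)$ together with an explicit $2$-dimensional "shuffle" decomposition, and only afterward assemble the scales into the $\k[\R_+]$-module statement; the functoriality in distance-nonincreasing maps is then automatic since every construction used is built from the metrics via inequalities that are preserved under such maps. Once surjectivity in degree $2$ is established, the final clause of the theorem follows immediately by combining it with the algebraic Künneth formula, which identifies the domain of the surjection with the asserted direct sum of $\mathrm{Tor}$ and tensor terms.
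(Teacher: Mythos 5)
Your proposal follows the same strategy as the paper: pass to $\mathbb{R}_+$-filtered simplicial sets, apply the filtered-simplicial-set K\"unneth theorem to compute the persistent homology of $(\mathbf{X}\times\mathbf{Y}, l_X+l_Y)$, and analyze the comparison map induced by the filtration inequality $l_{X\times Y}\le l_X+l_Y$ via a long exact sequence, showing it is an isomorphism in degrees $0,1$ and a surjection in degree $2$. Two points of difference in execution: for $n=0,1$ the paper's observation is simpler than the ``filled in without changing $H_0$ or $H_1$'' mechanism you describe --- the two filtration functions \emph{literally agree} on all $0$- and $1$-simplices, so the relative chain complex $\overline{PC}_n$ is zero there outright --- and for $n=2$ the paper realizes your ``subdivision by auxiliary vertices $(x_i,y_j)$'' idea via degenerate $3$-simplices $(\sigma',\tau')$ with repeated entries in the simplicial-set framework, together with a four-case analysis on the coincidence pattern of vertices, which is precisely the careful filtration-preserving argument you flagged as the main obstacle.
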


\begin{rmk}
To compute the bars in $PH_n(X \times Y)$ for $n = 0,1$ using the split short exact sequence in Theorem~\ref{thm:intro-maintheorem}, it suffices to use the formulas for tensor product and Tor of bars computed in Proposition~\ref{prp:tensortorbars}. See Section~\ref{subsec:bars} for an explanation of bars.
\end{rmk}

\begin{rmk}
Theorem~\ref{thm:intro-maintheorem} and all other results about metric spaces $(X,d)$ in this paper hold for a more general class of spaces: Precisely, it is required that the pairing $d : X \times X \rightarrow \mathbb{R}_+$ is symmetric $d(x,y) = d(y,x)$, but $d$ is not required to be reflexive $d(x,y) = 0 \iff x = y$ or satisfy the triangle inequality $d(x,z) \leq d(x,y) + d(y,z).$
\end{rmk}

For all $n \geq 0$, denote the prediction for $PH_n(X \times Y)$ from the short exact sequence in Theorem~\ref{thm:intro-maintheorem} by $PH_n(X,Y)$. That is, $PH_n(X,Y)$ is isomorphic to the direct sum of the tensor product terms on the left and the Tor terms on the right. The following result bounds the interleaving distance between the homology $PH_n(X \times Y)$ and the prediction  $PH_n(X,Y)$ from above by the minimum of the diameters of $X$ and $Y$; see Theorem~\ref{thm:interleavingdistance} for the proof. We discuss the interleaving distance in Section~\ref{sec:interleavingdistance}. The idea to look for a bound on this distance was suggested by Leonid Polterovich.

\begin{thm} \label{thm:interleavingdistance-intro}
The interleaving distance between $PH_*(X,Y)$ and $PH_*(X \times Y)$ is less than or equal to the minimum of the diameters
$$\min(\text{\rm diameter}(X),\text{\rm diameter}(Y)),$$
where the diameter is the maximum distance
$$\text{diameter}(X) = \max \{d_X(x_0,x_1) \,\, | \,\, x_0,x_1 \in X\}.$$
\hfill$\square$
\end{thm}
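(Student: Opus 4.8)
The plan is to build an explicit interleaving between the two persistence modules by exhibiting, for every $\ep > 0$, a pair of morphisms $PH_n(X,Y) \to PH_n(X \times Y)[\ep]$ and $PH_n(X \times Y) \to PH_n(X,Y)[\ep]$ whose composites (in either order) are the structure maps of the respective modules by $\ep$ shift, where $\ep$ may be taken to be $\min(\mathrm{diam}(X),\mathrm{diam}(Y))$. By symmetry assume $\mathrm{diam}(Y) \le \mathrm{diam}(X)$ and set $\ep = \mathrm{diam}(Y)$. The natural surjection already provided by Theorem~\ref{thm:intro-maintheorem} (and the full strength of Theorem~\ref{thm:kunnethmain}, which I am allowed to invoke) gives one of the two maps essentially for free: there is a morphism $q\co PH_n(X,Y) \to PH_n(X \times Y)$ which is an isomorphism for $n \le 1$ and a surjection for $n = 2$; one needs the analogous comparison map in all degrees, which should come from the K\"unneth formula for the maximum metric of \cite{GP} together with the chain-level comparison between the Vietoris-Rips complexes for $d_X + d_Y$ and $\max(d_X,d_Y)$. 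The key geometric input is the pair of inclusions of simplicial filtrations
\begin{equation*}
VR_t^{\max}(X \times Y) \;\subseteq\; VR_t^{+}(X \times Y) \;\subseteq\; VR_{t+\ep}^{\max}(X \times Y),
\end{equation*}
valid because $\max(d_X,d_Y) \le d_X + d_Y \le \max(d_X,d_Y) + \min(d_X,d_Y) \le \max(d_X,d_Y) + \ep$, the last inequality since any realized value of $\min(d_X,d_Y)$ is bounded by $\mathrm{diam}(Y) = \ep$.

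Carrying this out, the first step is to record the two filtration inclusions above and observe that they induce, on persistent homology, maps $PH_n^{\max} \to PH_n^{+} \to PH_n^{\max}[\ep]$ composing to the $\ep$-shift structure map of $PH_n^{\max}$, and maps $PH_n^{+} \to PH_n^{\max}[\ep] \to PH_n^{+}[\ep]$ composing (up to the shift) to the $\ep$-shift structure map of $PH_n^{+} = PH_n(X \times Y)$; this already shows $PH_n(X \times Y)$ and $PH_n^{\max}(X \times Y)$ are $\ep$-interleaved. The second step is to invoke the K\"unneth theorem of \cite{GP} for the maximum metric, which identifies $PH_n^{\max}(X \times Y)$ with the full K\"unneth prediction, i.e.\ with $PH_n(X,Y)$ (this is exactly the point: for the max metric the short exact sequence holds on the nose in every degree, so the prediction is realized). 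Composing the interleaving from step one with this isomorphism yields an $\ep$-interleaving between $PH_n(X,Y)$ and $PH_n(X \times Y)$, which is the claim since $\ep = \min(\mathrm{diam}(X),\mathrm{diam}(Y))$.

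I would then need to check the compatibility/naturality bookkeeping: that the comparison isomorphism $PH_n^{\max}(X \times Y) \cong PH_n(X,Y)$ is compatible with the internal $\k[\mathbb{R}_+]$-module structure (so that the composed maps are genuine module morphisms of the required shift), and that the map $q$ produced here agrees with the surjection of Theorem~\ref{thm:intro-maintheorem} in degrees $\le 2$ — this consistency check is largely formal but worth stating. One should also be slightly careful that the definition of interleaving distance used in Section~\ref{sec:interleavingdistance} is the module-theoretic one over $\k[\mathbb{R}_+]$ and that shifting a module by $\ep$ corresponds to the filtration shift used above; with the conventions of Section~\ref{sec:modules} this is immediate.

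The main obstacle I anticipate is not the interleaving construction itself but pinning down precisely the relationship between the max-metric K\"unneth prediction and "our" prediction $PH_n(X,Y)$: \cite{GP} may phrase their K\"unneth formula for a possibly different (but chain-homotopy equivalent) model of the Vietoris-Rips complex, or with a tensor/Tor decomposition over a differently normalized polynomial ring, so the real work is a dictionary lemma showing that the $\k[\mathbb{R}_+]$-modules $PH_n^{\max}(X \times Y)$ and $PH_n(X,Y)$ coincide (including their grading). If instead one prefers to avoid \cite{GP}, the alternative is to directly produce the interleaving maps between $PH_n(X,Y)$ and $PH_n^{+}(X \times Y)$ at the chain level using the algebraic K\"unneth formula over $\k[\mathbb{R}_+]$ established earlier in the paper, feeding in the filtration inclusions above — this is more self-contained but requires tracking the Eilenberg-Zilber / shuffle maps through the $\ep$-shift, which is where the computational care concentrates.
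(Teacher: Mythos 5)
Your argument hinges on the identification $PH_n^{\max}(X \times Y) \cong PH_n(X,Y)$, and this is false. The paper's $PH_n(X,Y)$ is by definition $PH_n(\mathbf{X}\times\mathbf{Y}, l_X + l_Y)$, the persistent homology of the product simplicial set with the \emph{sum} filtration $l_X + l_Y$; its K\"unneth description (Proposition~\ref{prp:kunnethdiagonal}) comes from the $\k[\mathbb{R}_+]$-module tensor product, which shifts degrees additively. The max-metric Rips complex has filtration $\max(l_X,l_Y)$ on each product simplex, which is a genuinely different $\k[\mathbb{R}_+]$-module; the K\"unneth formula for the max metric in \cite{GP} is a pointwise-in-$t$ statement over $\k$, not the module-level split SES with $Tor$ terms that defines $PH_n(X,Y)$. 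Concretely, take $X = Y = I = \{0,1\}$. Then $PH_1(I,I) = Tor_1\big((0,1),(0,1)\big) = (1,2) \ne 0$ by Proposition~\ref{prp:tensortorbars}, but the max metric on $I \times I$ assigns distance $1$ to every pair of distinct points, so $V^{\max}(I\times I,1)$ is the full $3$-simplex and $PH_1^{\max}(I\times I)=0$. (For the sum metric, $PH_1(I^2)=(1,2)$, agreeing with $PH_1(I,I)$ as Theorem~\ref{thm:kunnethmain} promises in degree $1$ and disagreeing with the max-metric answer.) So your second step fails. Without that identification, the best your route gives is the chain of two $\ep$-interleavings $PH^{\max}\sim PH^{+}$ and $PH^{\max}\sim PH(X,Y)$ (both of which can indeed be set up from the filtration comparisons), yielding only a $2\ep$ bound.

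Two further remarks. First, your displayed filtration inclusions point the wrong way: since $\max(d_X,d_Y)\le d_X+d_Y$, the \emph{larger} metric yields the \emph{smaller} Rips complex at each scale, so the correct chain is $VR_t^{+}\subseteq VR_t^{\max}\subseteq VR_{t+\ep}^{+}$. Second, the paper's proof (Theorem~\ref{thm:interleavingdistance}) never invokes the max metric or \cite{GP}: it compares $PC_*(X,Y)$ (filtered by $l_X+l_Y$) with $PC_*(X\times Y)$ (filtered by $l_{X\times Y}$, the max-length for the sum metric) directly. Both have generators indexed by the same simplices $(\s,\t)$, and the key estimate is $\max(l_X(\s),l_Y(\t))\le l_{X\times Y}(\s,\t)\le l_X(\s)+l_Y(\t)$, which gives $l_X+l_Y-l_{X\times Y}\le\min(l_X,l_Y)\le\delta$; the identity-on-generators maps are then the interleaving maps $F^{(t)},G^{(t)}$. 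No Eilenberg--Zilber bookkeeping is needed for the interleaving itself --- that machinery is only used to establish the K\"unneth SES describing $PH_*(X,Y)$, which is a logically independent fact.
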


We now outline the structure of the paper, including preliminary K\"unneth type results used in the proof of Theorem~\ref{thm:intro-maintheorem} which are of independent interest. These other K\"unneth formulas hold in all homological dimensions $n \geq 0$. We also summarize why Theorem~\ref{thm:intro-maintheorem} holds, and why it fails in homological dimensions $n \geq 2$.

In Section~\ref{sec:modules} we study finitely presented $\k[\mathbb{R}_+]$-modules and simplicial modules (Defintion~\ref{dfn:simplicialmodule}). Given a simplicial $\k[\mathbb{R}_+]$-module $M$, there is an associated chain complex $C_*(M)$; see Definition~\ref{dfn:alternatingfacemapscomplex}. The main result (Theorem~\ref{thm:intro-kunnethformodules}) proved in Section~\ref{sec:algebraickunneth} is an algebraic K\"unneth formula for the homology $H_*(C_*(M \otimes_{\k[\mathbb{R}_+]} N))$ for free finitely generated simplicial $\k[\mathbb{R}_+]$-modules $M$ and $N$. See Theorem~\ref{thm:kunnethformodules} for the proof. We remark that, if $\k[\mathbb{R}_+]$ were a principal ideal domain (PID), then this theorem would be immediate from the usual K\"unneth short exact sequence for simplicial modules over a PID. However, unlike the usual polynomial ring $\k[\mathbb{Z}_+]$ with exponents in $\mathbb{Z}_+= \{0,1,2,\ldots \}$, the ring $\k[\mathbb{R}_+]$ is not a PID. Related K\"unneth formulas have been proven in \cite{MR3734662} and \cite{BMpersistentkunneth}.

\begin{thm} \label{thm:intro-kunnethformodules} {\bf (Algebraic Persistent K\"unneth Theorem)}
Let $M,N$ be $\mathbb{R}_+$-graded free finitely generated simplicial $\k[\mathbb{R}_+]$-modules. Then for $n \geq 0$ there is a short exact sequence
\begin{align*}
0 &\rightarrow \bigoplus_{i + j = n} H_i(C_*(M)) \otimes_{\k[\mathbb{R}_+]} H_j(C_*(N)) \rightarrow H_n(C_*(M \otimes_{\k[\mathbb{R}_+]} N))\\
&\rightarrow \bigoplus_{i+j = n-1} Tor_1(H_i(C_*(M)), H_j(C_*(N))) \rightarrow 0
\end{align*}
which is natural in both $M$ and $N$. Moreover, the sequence splits, but not naturally.
\hfill$\square$
\end{thm}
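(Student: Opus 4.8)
The plan is to reduce the statement to an application of a Künneth spectral sequence argument, but carried out by hand at the chain level because $\k[\mathbb{R}_+]$ is not a PID. First I would establish the homological algebra of $\k[\mathbb{R}_+]$-modules that makes this possible: the key structural fact (proved in Section~\ref{sec:modules}, which I am assuming) is that every finitely presented $\k[\mathbb{R}_+]$-module decomposes as a finite direct sum of \emph{interval modules} (``bars''), and that such a module has projective (indeed free) dimension at most $1$, since a bar $\k[\mathbb{R}_+]$-module of the form $(a,b)$ or $[a,b)$ sits in a length-one free resolution $0 \to \k[\mathbb{R}_+] \to \k[\mathbb{R}_+] \to (\text{bar}) \to 0$ given by multiplication by the appropriate monomial. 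Hence $Tor_p = 0$ for $p \geq 2$ on finitely presented modules, which is precisely what makes a \emph{short} exact sequence (rather than a longer spectral-sequence tower) the correct statement.

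Next I would set up the double complex. Since $M$ and $N$ are free finitely generated simplicial $\k[\mathbb{R}_+]$-modules, $C_*(M)$ and $C_*(N)$ are chain complexes of free (hence flat) finitely generated $\k[\mathbb{R}_+]$-modules. Form the tensor product double complex $C_*(M) \otimes_{\k[\mathbb{R}_+]} C_*(N)$; by the Eilenberg--Zilber theorem (which holds for simplicial modules over any commutative ring, with the Alexander--Whitney and shuffle maps providing a chain homotopy equivalence), the total complex of this double complex is chain homotopy equivalent to $C_*(M \otimes_{\k[\mathbb{R}_+]} N)$, so it computes the right-hand side $H_n(C_*(M \otimes_{\k[\mathbb{R}_+]} N))$. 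Now run the standard argument: because each $C_j(N)$ is flat, taking homology of the double complex first in the $M$-direction gives $\bigoplus_{i} H_i(C_*(M)) \otimes_{\k[\mathbb{R}_+]} C_j(N)$; then one analyzes the second-page differential. Equivalently, and more cleanly, I would choose a free resolution of the complex $C_*(M)$ by the complexes of cycles and boundaries: write the short exact sequences $0 \to Z_i \to C_i(M) \to B_{i-1} \to 0$ and $0 \to B_i \to Z_i \to H_i(C_*(M)) \to 0$. Since $C_i(M)$ is free finitely generated and $\k[\mathbb{R}_+]$-submodules of free finitely generated modules need not be free in general — but here, crucially, $B_{i-1}$ is finitely presented, so $Z_i$ has projective dimension $\le$ that of $C_i(M)$ minus corrections, and in fact the sequences split into short free resolutions — I can splice these to build a length-$\le 1$ free resolution of each $H_i(C_*(M))$ and tensor through.

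Concretely, I would argue as follows. Tensoring the complex $0 \to Z_* \to C_*(M) \to B_*[-1] \to 0$ of $\k[\mathbb{R}_+]$-complexes with $C_*(N)$ and using flatness of $C_*(N)$ gives a short exact sequence of total complexes, hence a long exact sequence in homology. Because $B_{i-1}$ is finitely presented with free dimension $\le 1$, the subcomplex $Z_*$ is itself a complex of free modules (this is where finite presentation of the simplicial modules, and the classification of f.p.\ modules over $\k[\mathbb{R}_+]$, does the real work), so $Z_*$ and $B_*$ are free resolutions of $H_*(C_*(M))$ in the derived sense: $H_*(Z_* \otimes C_*(N)) = \bigoplus H_i(C_*(M)) \otimes H_j(C_*(N))$ up to the boundary corrections, and the connecting homomorphism in the long exact sequence is identified with the $Tor_1$ term. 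Splicing the two long exact sequences and using $Tor_{\geq 2} = 0$ collapses everything to the claimed short exact sequence. Naturality in $M$ and $N$ is automatic because every construction — Eilenberg--Zilber maps, the cycle/boundary filtration, the connecting maps — is functorial; the splitting comes from the fact that the right-hand term is built from f.p.\ modules of free dimension $\le 1$, so the extension class lives in an $Ext^1$ group into which one can split after choosing (non-canonical) lifts of generators, exactly as in the classical PID case.

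The main obstacle, and the only place where genuinely new input over the textbook PID proof is needed, is justifying that the relevant submodules (cycles $Z_i$, boundaries $B_i$) of the free finitely generated modules $C_i(M)$ are again free, or at least have free dimension $\le 1$, so that the $Tor$'s vanish above degree one and the cycle/boundary complexes function as honest free resolutions. This is false for arbitrary submodules of free $\k[\mathbb{R}_+]$-modules, so one must use that we are in the \emph{finitely presented} world and invoke the interval-decomposition classification from Section~\ref{sec:modules}: a finitely presented $\k[\mathbb{R}_+]$-module is a finite sum of bars, each bar has a two-term free resolution, and an extension of such modules still has free dimension $\le 1$ (since $Ext^2$ vanishes). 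Once this homological finiteness is in hand, the rest of the proof is the standard double-complex bookkeeping, and I would present it compactly by citing Eilenberg--Zilber and then running the two-step connecting-homomorphism argument to the conclusion.
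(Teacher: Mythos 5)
Your overall strategy shares two ingredients with the paper's proof (Eilenberg--Zilber to reduce to the tensor product of chain complexes, and then establishing enough homological finiteness over $\k[\mathbb{R}_+]$ so that the textbook PID-style argument applies), but the route you take through the bar classification has two gaps, and it is also more roundabout than the paper's.

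First, the circularity/finiteness gap. You repeatedly invoke the classification of finitely presented $\k[\mathbb{R}_+]$-modules as sums of bars to control $B_{i-1}$ and $Z_i$, but to apply that classification you must first know that $B_{i-1}$ is finitely \emph{presented}, i.e.\ that the kernel $Z_i$ of the surjection $C_i(M) \twoheadrightarrow B_{i-1}$ is finitely generated. Over $\k[\mathbb{R}_+]$ this is not automatic (the ring is not Noetherian), and you never establish it. In the paper this is exactly the content of Lemma~\ref{lem:finitelygeneratedfree}: by a graded row/column-reduction argument one shows directly that the kernel and image of any map of graded free finitely generated $\k[\mathbb{R}_+]$-modules are again graded free and finitely generated. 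Without that lemma (or an explicit coherence statement for graded $\k[\mathbb{R}_+]$-modules, which would need its own proof), your deduction that $B_{i-1}$ is a finite sum of bars is unsupported; in the paper, the classification Proposition~\ref{prp:classification} is itself a \emph{consequence} of Lemma~\ref{lem:finitelygeneratedfree}, so you are implicitly relying on the thing you are trying to avoid.

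Second, the freeness-versus-$\mathrm{pd}\le 1$ gap. Your argument via Schanuel from ``$B_{i-1}$ has free dimension $\le 1$'' shows $Z_i$ is projective, but the standard K\"unneth argument also needs the short exact sequence $0 \to Z_i \to C_i(M) \to B_{i-1} \to 0$ to split and needs $0 \to B_i \to Z_i \to H_i \to 0$ to be a \emph{free} resolution; both require $B_i$ to be free, not merely of free dimension $\le 1$. The classification only tells you $B_i$ is a finite sum of bars, and bars $(a,b)$ with $b<\infty$ are torsion, hence not free. The missing step is: $B_i$ embeds in the free module $C_i(M)$, hence is torsion-free (since $\k[\mathbb{R}_+]$ is a domain), hence by the classification is a sum of bars $(a,\infty)$, i.e.\ free. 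You assert the conclusion (``$Z_*$ and $B_*$ are free resolutions'') but never make this torsion-free observation, which is the only thing that could close the argument along your route. The paper avoids this entirely because Lemma~\ref{lem:finitelygeneratedfree} delivers freeness of $B_n$ and $Z_n$ directly, after which the proof is one paragraph: cite the PID K\"unneth argument (e.g.\ Hatcher, Theorem~3B.5), which only needs freeness of cycles and boundaries, then apply Eilenberg--Zilber. If you fill the two gaps above your argument can be made to work, but you will have essentially reproven Lemma~\ref{lem:finitelygeneratedfree} along the way, so the classification-based detour buys nothing.
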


Building on this algebraic K\"unneth formula over $\k[\mathbb{R}_+]$, in Section~\ref{sec:filteredsimplicialsets} we establish a K\"unneth formula (Theorem~\ref{thm:kunnethfilteredsimplicialsets}) for persistent homology of $\mathbb{R}_+$-filtered simplicial sets $({\bf X}, l)$ (Definitions~\ref{dfn:filteredsimplicialset},\,\ref{dfn:persistenthomologyfilteredsimplicialset}). These are simplicial sets ${\bf X}$ (see Section~\ref{sec:simplicialsets} for a review of simplicial sets) together with a map $l : {\bf X}_n \rightarrow \mathbb{R}_+$ on the $n$-simplices ${\bf X}_n$ of ${\bf X}$ for each $n \geq 0$ such that $l$ is non-increasing along the structure maps (face and degeneracy) in the simplicial set. For any $t \in \mathbb{R}_+$, the collection of simplices $\s \in {\bf X}$ such that $l(\s) \leq t$ forms a simplicial set ${\bf X}^{(t)}$, and if $t \leq t'$ then ${\bf X}^{(t)} \subseteq {\bf X}^{(t')}$. The persistent homology $PH_*({\bf X},l)$ is a $\mathbb{R}_+$-graded $\k[\mathbb{R}_+]$-module with homogeneous degree-$t$ part equal to the homology of the simplicial set ${\bf X}^{(t)}$, i.e. $PH_*^{(t)}({\bf X},l) = H_*({\bf X}^{(t)};\k).$

The product of filtered simplicial sets $({\bf X}, l_X)$ and $({\bf Y}, l_Y)$ is the pair $({\bf X} \times {\bf Y}, l_X + l_Y)$ where ${\bf X} \times {\bf Y}$ is the usual product of simplicial sets and $l_X + l_Y$ is the pointwise sum of the filtration functions. In the notation of the above paragraph, this corresponds to the filtration $({\bf X} \times {\bf Y})^{(t)} = \bigcup_{t_X + t_Y \leq t} {\bf X}^{(t_X)} \times {\bf Y}^{(t_Y)}$ for $t \in \mathbb{R}_+$. The following K\"unneth theorem follows from Theorem~\ref{thm:intro-kunnethformodules}; see Section~\ref{sec:filteredsimplicialsets} for the proof. In \cite{GP}, the authors derive K\"unneth formulas for the persistent homology of the product $\cX \times \cY$ of $\mathbb{Z}_+$-filtered topological spaces $\cX = (\cX^{(0)} \subset \cX^{(1)} \subset \cX^{(2)} \subset \cdots )$ for various choices of filtration on $\cX \times \cY$, including $(\cX \times \cY)^{(t)} = \bigcup_{t_X + t_Y \leq t} \cX^{(t_X)} \times \cY^{(t_Y)}$ for $t \in \mathbb{Z}_+$. 

\begin{thm} \label{thm:kunnethfilteredsimplicialsets} {\bf (K\"unneth Theorem for $\mathbb{R}_+$-filtered simplicial sets)}
Given $\mathbb{R}_+$-filtered simplicial sets $({\bf X},l_X)$ and $({\bf Y},l_Y)$ such that ${\bf X}_n$ and ${\bf Y}_n$ are finite sets for all $n \geq 0$, then for $n \geq 0$ there is a short exact sequence
\begin{align*}
0 &\rightarrow \bigoplus_{i+j = n} PH_i({\bf X},l_X) \otimes_{\k[\mathbb{R}_+]} PH_j({\bf Y},l_Y) \rightarrow PH_n({\bf X} \times {\bf Y}, l_X + l_Y)\\
&\rightarrow \bigoplus_{i+j =n-1} Tor_1(PH_i({\bf X},l_X),PH_j({\bf Y},l_Y)) \rightarrow 0
\end{align*}
which is natural with respect to maps $({\bf X},l_X) \rightarrow ({\bf X}',l_{X'})$ and $({\bf Y},l_Y) \rightarrow ({\bf Y}',l_{Y'})$. Moreover, the sequence splits, but not naturally.
\hfill$\square$
\end{thm}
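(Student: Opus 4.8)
The plan is to deduce Theorem~\ref{thm:kunnethfilteredsimplicialsets} from the Algebraic Persistent K\"unneth Theorem (Theorem~\ref{thm:intro-kunnethformodules}) by translating the topological input into the algebraic setting. The key observation is that, for an $\mathbb{R}_+$-filtered simplicial set $({\bf X},l_X)$ with each ${\bf X}_n$ finite, one can build a simplicial $\k[\mathbb{R}_+]$-module $M$ whose module of $n$-simplices $M_n$ is the free $\k[\mathbb{R}_+]$-module on the set ${\bf X}_n$, but with each generator corresponding to a simplex $\s$ placed in graded degree $l_X(\s)$; that is, $M_n = \bigoplus_{\s \in {\bf X}_n} x^{l_X(\s)}\k[\mathbb{R}_+]$, a free finitely generated graded $\k[\mathbb{R}_+]$-module. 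The face and degeneracy maps of ${\bf X}$ induce $\k[\mathbb{R}_+]$-module maps on the $M_n$: since $l_X$ is non-increasing along the structure maps, a structure map sending $\s$ to $\t$ has $l_X(\t) \leq l_X(\s)$, so multiplication by $x^{l_X(\s) - l_X(\t)}$ is a legitimate homogeneous map sending the generator $x^{l_X(\s)}$ to $x^{l_X(\t)} \cdot x^{l_X(\s)-l_X(\t)}$ landing in the right graded degree. First I would carefully set up this functor $({\bf X},l_X) \mapsto M({\bf X},l_X)$ from $\mathbb{R}_+$-filtered simplicial sets with finite $n$-simplices to free finitely generated simplicial $\k[\mathbb{R}_+]$-modules, and check it is indeed functorial with respect to filtered simplicial maps (which are required to be non-increasing on filtration values, so the same multiplication-by-a-power trick makes the induced maps well-defined).

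Next I would verify the two compatibilities that make the reduction work. The first is that $H_n(C_*(M({\bf X},l_X)))$, computed with the alternating-face-maps complex of Definition~\ref{dfn:alternatingfacemapscomplex}, is isomorphic as a $\k[\mathbb{R}_+]$-module to $PH_n({\bf X},l_X)$. This should follow by checking degreewise: the degree-$t$ part of $M_n$ is the $\k$-vector space on $\{\s \in {\bf X}_n : l_X(\s) \leq t\} = {\bf X}^{(t)}_n$, the boundary map restricts correctly, and hence the degree-$t$ homology is $H_n({\bf X}^{(t)};\k)$, which is the definition of $PH_n^{(t)}({\bf X},l_X)$; one should also note the standard fact that the homology of the alternating-face-maps complex agrees with the (normalized) simplicial homology. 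The second compatibility is the monoidal one: $M({\bf X} \times {\bf Y}, l_X + l_Y) \cong M({\bf X},l_X) \otimes_{\k[\mathbb{R}_+]} M({\bf Y},l_Y)$ as simplicial $\k[\mathbb{R}_+]$-modules. Degreewise, $({\bf X} \times {\bf Y})_n = {\bf X}_n \times {\bf Y}_n$, so the generators match up as pairs $(\s,\t)$, and the grading matches because the generator $(\s,\t)$ sits in degree $l_X(\s) + l_Y(\t)$ on the left, while $x^{l_X(\s)} \otimes x^{l_Y(\t)}$ sits in degree $l_X(\s)+l_Y(\t)$ in the tensor product; the simplicial structure maps on a product of simplicial sets act coordinatewise, matching the tensor product of simplicial modules. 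One subtlety to handle here is that the chain-level Eilenberg--Zilber/Alexander--Whitney comparison is needed to pass from $C_*(M \otimes N)$ versus $C_*(M) \otimes C_*(N)$ — but Theorem~\ref{thm:intro-kunnethformodules} is already phrased in terms of $H_n(C_*(M \otimes_{\k[\mathbb{R}_+]} N))$, so this is absorbed into the cited result and I only need the simplicial-module isomorphism just described.

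With these two compatibilities in place, the proof is a one-line invocation: apply Theorem~\ref{thm:intro-kunnethformodules} to $M = M({\bf X},l_X)$ and $N = M({\bf Y},l_Y)$, substitute $H_i(C_*(M)) \cong PH_i({\bf X},l_X)$, $H_j(C_*(N)) \cong PH_j({\bf Y},l_Y)$, and $H_n(C_*(M \otimes N)) \cong H_n(C_*(M({\bf X}\times{\bf Y}, l_X+l_Y))) \cong PH_n({\bf X}\times{\bf Y}, l_X+l_Y)$, and transport the splitting and the naturality statements through the functor $M(-)$. Naturality with respect to filtered simplicial maps $({\bf X},l_X) \to ({\bf X}',l_{X'})$ follows from functoriality of $M(-)$ together with the naturality clause in Theorem~\ref{thm:intro-kunnethformodules}; the non-naturality of the splitting is inherited verbatim.

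The main obstacle, I expect, is not any deep step but the bookkeeping around the grading conventions and the normalized-versus-unnormalized chain complex: one must be scrupulous that the functor $M(-)$ lands in \emph{free} finitely generated simplicial modules (the degeneracies do produce genuinely new free generators, so finiteness in each simplicial degree is what is used, not finiteness overall — which is exactly the hypothesis "${\bf X}_n$ and ${\bf Y}_n$ are finite sets for all $n\geq 0$"), and that the identification $H_n(C_*(M({\bf X},l_X))) \cong PH_n({\bf X},l_X)$ genuinely holds degreewise including for the degree-$t$ truncation. A secondary point requiring care is confirming that the product simplicial set ${\bf X}\times{\bf Y}$ with filtration $l_X+l_Y$ realizes the stated filtration $({\bf X}\times{\bf Y})^{(t)} = \bigcup_{t_X+t_Y\leq t}{\bf X}^{(t_X)}\times{\bf Y}^{(t_Y)}$, which is a direct unwinding of definitions but worth stating explicitly since it is the geometric content being matched to the algebra.
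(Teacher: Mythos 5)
Your proposal follows exactly the paper's route: the functor $M(-)$ you construct is precisely the functor $F$ of \eqref{eq:persistentfunctor}--\eqref{eq:persistentchainsset} (including the $T^{l(\s)-l({\bf X}_\varphi(\s))}$ trick on structure maps), your ``first compatibility'' $H_n(C_*(M({\bf X},l_X))) \cong PH_n({\bf X},l_X)$ is in fact the definition of $PH_*$ in Definition~\ref{dfn:persistenthomologyfilteredsimplicialset} and so requires no verification, and your ``second compatibility'' is the canonical isomorphism $F({\bf X}\times{\bf Y},l_X+l_Y)\cong F({\bf X},l_X)\otimes_{\k[\mathbb{R}_+]}F({\bf Y},l_Y)$ cited in the paper's proof before invoking Theorem~\ref{thm:kunnethformodules}. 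The argument is correct and matches the paper's.
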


In Section~\ref{sec:metricspaces} we study the persistent homolgy $PH_*(X)$ of metric spaces $(X, d_X)$. We define $PH_*(X)$ to be the persistent homology $PH_*({\bf X}, l_X)$ of a $\mathbb{R}_+$-filtered simplicial set $({\bf X},l_X)$ associated to the metric space. The filtration function $l_X$ is defined using the metric $d_X$ (see \eqref{eq:maxlength}). The homogeneous degree-$t$ part $PH_*^{(t)}(X)$ is equal to homology $H_*({\bf X}^{(t)};\k)$, which in turn is isomorphic to homology of the time-$t$ Vietoris-Rips simplicial complex $V(X,t)$ since $V(X,t)$ and ${\bf X}^{(t)}$ are homotopy equivalent; see Remark~\ref{rmk:vietorisrips} for Vietoris-Rips $V(X,t)$. That is, we have $PH_*^{(t)}(X) \cong H_*(V(X,t);\k)$
for all $t \in \mathbb{R}_+$.

The product of two metric spaces $X \times Y$ equipped with the sum metric $d_X + d_Y$ has associated $\mathbb{R}_+$-filtered simplicial set $({\bf X} \times {\bf Y}, l_{X \times Y})$, where the filtration function $l_{X \times Y}$ is defined using the sum metric $d_X + d_Y$. The underlying reason for the failure of Theorem~\ref{thm:intro-maintheorem} in dimensions $n \geq 2$ is that the filtrations functions satisfy the inequality (see \eqref{eq:filtrationinequality})
$$l_{X \times Y} \leq l_X + l_Y,$$
but equality does not necessarily hold. The inequality provides a graded module homomorphism
$$PH_n({\bf X} \times {\bf Y}, l_X + l_Y) \rightarrow PH_n({\bf X} \times {\bf Y}, l_{X \times Y}) = PH_n(X \times Y),$$
but since equality does not hold, it is not necessarily an isomorphism. If it were an isomorphism, then Theorem~\ref{thm:intro-maintheorem} would follow immediately from Theorem~\ref{thm:kunnethfilteredsimplicialsets} in all dimensions $n \geq 0$. We show in Section~\ref{sec:les} that this homomorphism fits into a long exact sequence, and in Section~\ref{sec:kunnethinlowdims} we show that the correction term in the long exact sequence vanishes in homological dimensions $n = 0,1,2$. Hence the homomorphism is an isomorphism for $n = 0,1$ and a surjection for $n = 2$. Theorem~\ref{thm:intro-maintheorem} then follows from Theorem~\ref{thm:kunnethfilteredsimplicialsets}.

We remark that, in the notation of Theorem~\ref{thm:interleavingdistance-intro}, we have
$$PH_n(X,Y) = PH_n({\bf X} \times {\bf Y}, l_X + l_Y)$$
by definition; see \eqref{eq:predictedmodule} and Proposition~\ref{prp:kunnethdiagonal}.

In Section~\ref{sec:hammingcube} and Example~\ref{ex:intro-hamming}, we investigate $PH_*(I^k)$ where $I^k = \{0,1\}^k$ is the Hamming cube with the Hamming metric. We use the splitting $I^k = I^{k-1} \times I$ of the Hamming metric to apply Theorem~\ref{thm:intro-maintheorem} inductively to compute $PH_n(I^k)$ for $n = 0,1,2,$ and for all $k \geq 1$. This involves a technical computation of the nontrivial kernel of the surjection $PH_2(I \times I^{k-1}, l_I + l_{I^{k-1}}) \rightarrow PH_2(I^k)$; see Propositions~\ref{prp:ph1lemma},\,\ref{prp:ph2lemma}.

\begin{ex} \label{ex:intro-hamming}
Consider the metric space $I = \{0,1\}$ with distance $1$ between its two points. The $k$-dimensional {\bf Hamming Cube} is the $k$-fold Cartesian product $I^k$ consisting of $k$-tuples of zeros and ones equipped with the Hamming metric: The distance between two $k$-tuples in $I^k$ is the number of coordinates in which they are not equal. The Hamming metric is equal to the sum metric on $I^k$ given by summing the metric on each of the $k$ factors of $I$, which is also equal to the sum metric on the two factor splitting $I^k = I^{k-1} \times I$. So, from a full K\"unneth formula for the sum metric on a product $X \times Y$, one hopes to inductively compute $PH_n(I^k)$ for all $n$ and $k$.

In Section~\ref{sec:hammingcube} we prove that $PH_2(I^k) = 0$ for all $k$. This shows that Theorem~\ref{thm:intro-maintheorem} does not hold in dimension $n = 2$. Indeed, for $X = I$ and $Y = I^2$, the product is $X \times Y = I^3$, and the short exact sequence in Theorem~\ref{thm:intro-maintheorem} for $n = 2$ would predict that $PH_2(I^3)$ has $1$ bar; see Section~\ref{subsec:bars} for a review of bars and how to compute tensor product and Tor of bars.

The table below displays the number of bars in $PH_n(I^k)$ for low values of $n$ and $k$. This data shows that Theorem~\ref{thm:intro-maintheorem} does not hold in higher dimensions $n > 2$. Indeed, again for $X = I$ and $Y = I^2$ with product $X \times Y = I^3$, the short exact sequence in Theorem~\ref{thm:intro-maintheorem} for $n = 3$ would predict that $PH_3(I^3) = 0$, but in fact $PH_3(I^3)$ has $1$ bar.
\\

\begin{center}
\begin{tabular}{lclclclclclclcl}
\hline
\multicolumn{8}{|c|}{Number of bars in $PH_n(I^k)$} \\
\hline\\
                            & $k=1$ & $k=2$ & $k=3$ & $k=4$ & $k=5$ & $k=6$ & $k=7$ \\
\hline
$PH_0(I^k)$ & 2   & 4   & 8   & 16  & 32  & 64  & 128 \\
$PH_1(I^k)$ & 0   & 1   & 5   & 17  & 49  & 129 & 321 \\
$PH_2(I^k)$ & 0   & 0   & 0   & 0   & 0   & 0   & 0   \\
$PH_3(I^k)$ & 0   & 0   & 1   & 9   & 49  & 209 & 769 \\
$PH_4(I^k)$ & 0   & 0   & 0   & 0   & 1   & 11  & 71  \\
$PH_5(I^k)$ & 0   & 0   & 0   & 0   & 0   & --- & --- \\
$PH_6(I^k)$ & 0   & 0   & 0   & 0   & 0   & --- & --- \\
$PH_7(I^k)$ & 0   & 0   & 0   & 1   & 10  & --- & ---
\end{tabular}
\end{center}
We discuss the Hamming cube further in Section~\ref{sec:hammingcube}.
\hfill$\square$
\end{ex}

\section{$\k[\mathbb{R}_+]$-modules} \label{sec:modules}

The ring $\k[\mathbb{R}_+]$ is the monoid ring of $\mathbb{R}_+ = [0,\infty)$ over a field $\k$. Concretely, it is the polynomial ring in a single variable $T$ with exponents in $\mathbb{R}_+$ and coefficients in $\k$. There is a natural $\mathbb{R}_+$-grading on $\k[\mathbb{R}_+$] where the homogeneous degree $t \in \mathbb{R}_+$ elements are the monomials with exponent $t$. The usual polynomial ring in a single variable is the monoid ring $\k[\mathbb{Z}_+]$ over the nonnegative integers $\mathbb{Z}_+ = \{0,1,2,\ldots\}$, i.e., the exponents on the variable are in $\mathbb{Z}_+$.

We are interested in the ring $\k[\mathbb{R}_+]$ because the persistent homology $PH_*(X)$ of a metric space $(X,d)$ is a $\k[\mathbb{R}_+]$-module. Indeed, $PH_*(X)$ is often viewed as a persistence vector space -- or in other words a functor from the poset $\mathbb{R}_+$ to the category of $\k$-vector spaces -- and a persistence vector space is equivalent to a $\mathbb{R}_+$-graded $\k[\mathbb{R}_+]$-module; see Remark~\ref{rmk:artinrees}. For the purposes of this paper, it is more convenient to work with $\k[\mathbb{R_+}]$-modules.

Our main result in this section in an algebraic K\"unneth formula (Theorem~\ref{thm:kunnethformodules}) for simplicial modules over $\k[\mathbb{R}_+]$ (Section~\ref{sec:simplicialmodules}). Theorem~\ref{thm:kunnethformodules} would be immediate from the usual K\"unneth short exact sequence for simplicial modules over a principal ideal domain -- but $\k[\mathbb{R}_+]$ is not a principal ideal domain, unlike the usual polynomial ring $\k[\mathbb{Z}_+]$ with integer exponents. Indeed, $\k[\mathbb{R}_+]$ is not even Noetherian: Consider the ascending chain of principle ideals $(T) \subset (T^{1/2}) \subset (T^{1/3}) \subset \cdots.$ We resolve these issues in Section~\ref{sec:finiteness}: The ring $\k[\mathbb{R}_+]$ has the crucial property that the kernel and the image of a homomorphism between free finitely generated modules are both free and finitely generated; see Lemma~\ref{lem:finitelygeneratedfree}. This property of $\k[\mathbb{R}_+]$ is enough to establish Theorem~\ref{thm:kunnethformodules}.

In Section~\ref{subsec:bars}, we discuss the barcode classification of finitely presented persistence vector spaces in the language of finitely presented $\k[\mathbb{R}_+]$-modules (Proposition~\ref{prp:classification}), and we compute the tensor product and Tor of bars in Proposition~\ref{prp:tensortorbars}. This allows for efficient computation with the algebraic K\"unneth theorem (Theorem~\ref{thm:kunnethformodules}).

\begin{rmk} \label{rmk:artinrees} {\bf (Persistence vector spaces and Artin-Rees)}
The Artin-Rees construction provides an equivalence between finitely generated $\mathbb{R}_+$-graded $\k[\mathbb{R}_+]$-modules and finitely generated persistence vector spaces over $\k$. We describe this equivalence in this remark. See \cite[Theorem~3.1]{computingpersistenthomologyZC} for a discussion over $\mathbb{Z}_+$ instead of $\mathbb{R}_+$.

Recall the definition of a persistence vector space $V = \{V_t\}_{t \in \mathbb{R}_+}$ over $\k$ from \cite[Def.~3.3]{carlsson2014topological}. This is a functor $V : \mathbb{R}_+ \rightarrow Vec_{\k}$, where here $\mathbb{R}_+$ denotes the partially ordered set $\mathbb{R}_+$ viewed as a category and $Vec_{\k}$ is the category of vector spaces over $\k$. Let $PVec_{\k}$ denote the category of persistence vector spaces over $\k$.

The Artin-Rees construction provides an equivalence of categories $A : PVec_{\k} \rightarrow \k[\mathbb{R}_+]\text{-Mod}$, where $\k[\mathbb{R}_+]\text{-Mod}$ is the category of graded $\k[\mathbb{R}_+]$-modules. Explicitly, $A(V)$ is the $\k$-vector space $\oplus_{t \in \mathbb{R}_+} V_t$. The $\k[\mathbb{R}_+]$-module structure on $A(V)$ is given as follows. Let $T$ denote the variable in $\k[\mathbb{R}_+]$, and for $t \leq t'$ let $l_{t,t'} : V_t \rightarrow V_{t'}$ denote the corresponding map in $V$. Then for a homogeneous element $v \in V_t$ we define $T^a \cdot v = l_{t,t+a}(v) \in V_{t+a}$ for $a \in \mathbb{R}_+$. This gives $A(V)$ the structure of a $\mathbb{R}_+$-graded $\k[\mathbb{R}_+]$-module.

The inverse functor $A^{-1} : \k[\mathbb{R}_+]\text{-Mod} \rightarrow PVec_{\k}$ is as follows. For $M \in \k[\mathbb{R}_+]\text{-Mod}$ and $t \in \mathbb{R}_+$, the $\k$-vector space $A^{-1}(M)_t$ is the homogeneous degree $t$ part of $M$. For $t \leq t'$ the linear map $l_{t,t'} : A^{-1}(M)_t \rightarrow A^{-1}(M)_{t'}$ is multiplication by $T^{t' - t}$.
\end{rmk}

\subsection{Finiteness and Freeness} \label{sec:finiteness}
Over the ring $\k[\mathbb{R}_+]$, submodules of finitely generated free modules are not necessarily finitely generated free: Indeed, the ideal in $\k[\mathbb{R}_+]$ consisting of all polynomials with zero constant term is neither free nor finitely generated. This is in stark contrast to the polynomial ring with integer exponents $\k[\mathbb{Z}_+]$ which is a principal ideal domain and hence does have the property that submodules of finitely generated free modules are always finitely generated free.

To prove the algebraic K\"unneth formula over $\k[\mathbb{R}_+]$ (Theorem~\ref{thm:kunnethformodules}) and to describe homology using bars via the classification theorem for finitely presented $\k[\mathbb{R}_+]$-modules (Proposition~\ref{prp:classification}), it is essential that, given a homomorphism of finitely generated graded free $\k[\mathbb{R}_+]$-modules, the kernel and the image are finitely generated and graded free. This is the content of Lemma~\ref{lem:finitelygeneratedfree}.

\begin{rmk} \label{rmk:equivtoacta}
The following results, mostly in this subsection, are a rephrasing of results in \cite[Sec.~3.4]{carlsson2014topological} from the language of persistence vector spaces to the language of $\k[\mathbb{R}_+]$-modules (see Remark~\ref{rmk:artinrees} for a discussion of the equivalence between these notions): Lemma~\ref{lem:matrixofmap} corresponds to \cite[Prop.~3.7]{carlsson2014topological}. Lemma~\ref{lem:finitelygeneratedfree} corresponds to the essential step in the proof of \cite[Prop.~3.12]{carlsson2014topological}. And in Proposition~\ref{prp:classification}, we reinterpret the classification of finitely presented persistence vector spaces \cite[Prop.~3.12,~3.13]{carlsson2014topological} as a classification of finitely presented $\mathbb{R}_+$-graded $\k[\mathbb{R}_+]$-modules.
\end{rmk}

To prepare for Lemma~\ref{lem:finitelygeneratedfree}, we first understand the matrix of a $\mathbb{R}_+$-graded $\k[\mathbb{R}_+]$-module homomorphism with respect to a basis.

\begin{lem} \label{lem:matrixofmap}
Consider free and finitely generated $\mathbb{R}_+$-graded $\k[\mathbb{R}_+]$-modules $P$ and $Q$ with bases $\{p_1,\ldots,p_n\}$ and $\{q_1,\ldots,q_m\}$, respectively, such that each $p_j$ and $q_i$ is homogeneous.

Then, given a $\mathbb{R}_+$-graded $\k[\mathbb{R}_+]$-module homomorphism $\varphi : P \rightarrow Q$, its matrix $A^{\varphi}$ with respect to the given bases has homogeneous entries satisfying
\begin{equation} \label{eq:mapmatrixcondition}
\deg(A^{\varphi}_{i,j}) = \deg(p_j) - \deg(q_i) \geq 0 \text{ or } A^{\varphi}_{i,j} = 0 \text{ for all } i,j.
\end{equation}

Conversely, any $(m \times n)$-matrix $A$ with homogeneous entries in $\k[\mathbb{R}_+]$ satisfying \eqref{eq:mapmatrixcondition} determines a $\mathbb{R}_+$-graded $\k[\mathbb{R}_+]$-module homomorphism $\varphi^A : P \rightarrow Q$ by $\varphi^A(p_j) = \sum_{i=1}^m A_{i,j} q_i$.

Moreover, the correspondences $\varphi \mapsto A^{\varphi}$ and $A \mapsto \varphi^A$ are inverses of each other.
\end{lem}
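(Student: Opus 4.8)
The plan is to unwind the definitions on both sides. A homogeneous basis element $p_j$ has some degree $d_j := \deg(p_j)$, meaning $p_j$ lives in the degree-$d_j$ part of $P$; similarly put $e_i := \deg(q_i)$. First I would observe that since $\varphi$ is a graded homomorphism, $\varphi(p_j)$ must be homogeneous of degree $d_j$ in $Q$. Writing $\varphi(p_j) = \sum_i A^\varphi_{i,j} q_i$ in the given basis, the degree-$d_j$ part of $Q$ decomposes as $\bigoplus_i (\k[\mathbb{R}_+])_{d_j - e_i} \cdot q_i$, where $(\k[\mathbb{R}_+])_s$ denotes the degree-$s$ part of the ring — which is the one-dimensional $\k$-span of $T^s$ if $s \geq 0$ and is $0$ if $s < 0$. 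Hence each coefficient $A^\varphi_{i,j}$ must be a homogeneous element of degree $d_j - e_i$, which forces either $d_j - e_i \geq 0$ (and $A^\varphi_{i,j}$ a scalar multiple of $T^{d_j - e_i}$) or $A^\varphi_{i,j} = 0$. This is precisely \eqref{eq:mapmatrixcondition}, and uniqueness of the coefficients follows from freeness of $Q$.

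Next, for the converse, given a matrix $A$ satisfying \eqref{eq:mapmatrixcondition}, I would define $\varphi^A$ on the basis by $\varphi^A(p_j) = \sum_i A_{i,j} q_i$ and extend $\k[\mathbb{R}_+]$-linearly; this is well-defined and unique because $P$ is free on $\{p_1,\dots,p_n\}$. It remains to check $\varphi^A$ is graded, i.e. sends the degree-$t$ part of $P$ into the degree-$t$ part of $Q$. A general homogeneous element of degree $t$ in $P$ is a $\k$-linear combination of terms $T^{t - d_j} p_j$ (over those $j$ with $t \geq d_j$), so it suffices to check $\varphi^A(T^{t-d_j} p_j) = T^{t-d_j} \sum_i A_{i,j} q_i$ is homogeneous of degree $t$; each summand $T^{t-d_j} A_{i,j} q_i$ has degree $(t - d_j) + (d_j - e_i) + e_i = t$ when $A_{i,j} \neq 0$, using \eqref{eq:mapmatrixcondition}. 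Finally, the two constructions are mutually inverse: starting from $\varphi$, the matrix $A^\varphi$ recovers $\varphi$ on basis elements hence everywhere by linearity, and starting from $A$, reading off the matrix of $\varphi^A$ in the given bases returns $A$ by construction and the uniqueness of basis expansions in $Q$.

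I do not expect a serious obstacle here; this is essentially a bookkeeping lemma about graded modules over a graded ring whose graded pieces are at most one-dimensional. The only point requiring a little care is the degree convention: the matrix entry $A^\varphi_{i,j}$ is the coefficient of $q_i$ in $\varphi(p_j)$, so its degree is $\deg(p_j) - \deg(q_i)$ rather than the other way around — getting the sign/order right in \eqref{eq:mapmatrixcondition} is the one place to be careful. I would state explicitly at the outset the fact that the degree-$s$ homogeneous part of $\k[\mathbb{R}_+]$ is $\k \cdot T^s$ for $s \geq 0$ and $0$ for $s < 0$, since every step of the argument rests on it.
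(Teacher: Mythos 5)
Your proof is correct and follows essentially the same approach as the paper's: read off the matrix entries from $\varphi(p_j) = \sum_i A^\varphi_{i,j} q_i$, use the gradedness of $\varphi$ and the basis property of the $q_i$ to pin down $\deg(A^\varphi_{i,j})$, and then run the construction in reverse. The paper's proof is terser (it dismisses the converse as "clear"), whereas you spell out the gradedness check for $\varphi^A$ and the mutual-inverse verification, but there is no substantive difference in method.
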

\begin{proof}
Consider a $\mathbb{R}_+$-graded $\k[\mathbb{R}_+]$-module homomorphism $\varphi : P \rightarrow Q$. Then, the entries of the matrix $A^{\varphi}$ are defined by $\varphi(p_j) = \sum_{i = 1}^m A^{\varphi}_{i,j} q_i$. Since $\varphi$ is $\mathbb{R}_+$-graded, the image $\varphi(p_j)$ is homogeneous of degree $\deg(p_j)$. Since the $q_i$ form a basis of $Q$, this implies that either $A^{\varphi}_{i,j}q_i$ is $0$ or it is homogeneous of degree $\deg(p_j)$ for all $i$, which implies that either $A^{\varphi}_{i,j} = 0$ or $\deg(A^{\varphi}_{i,j}) = \deg(p_j) - \deg(q_i)$ for all $i$, as claimed. The converse statement is clear.
\end{proof}

\begin{lem} \label{lem:finitelygeneratedfree}
Suppose $\varphi : P \rightarrow Q$ is a homomorphism of $\mathbb{R}_+$-graded free\footnote{Graded free means that there exists a basis of homogeneous elements.} finitely generated $\k[\mathbb{R}_+]$-modules.

Then, there exist bases $\{p_1,\ldots,p_n\}$ of $P$ and $\{q_1,\ldots,q_m\}$ of $Q$ consisting of homogeneous elements such that the matrix of $\varphi$ with respect to these bases has the property that every row and every column has at most one nonzero entry.

In particular, the kernel and the image of $\varphi$ are graded free and finitely generated.
\end{lem}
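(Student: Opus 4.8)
The plan is to perform a Smith-normal-form–style reduction on the matrix $A^\varphi$ of $\varphi$, using that the homogeneity condition \eqref{eq:mapmatrixcondition} severely restricts which elementary operations are available, but also makes the key divisibility step automatic. By Lemma~\ref{lem:matrixofmap}, after choosing homogeneous bases $\{p_j\}$ of $P$ and $\{q_i\}$ of $Q$, every nonzero entry $A^\varphi_{i,j}$ is a monomial $c_{i,j}T^{\deg(p_j)-\deg(q_i)}$ with $c_{i,j}\in\k^\times$ (nonzero scalar), since homogeneous elements of $\k[\mathbb{R}_+]$ are exactly scalar multiples of monomials. The allowed changes of homogeneous basis correspond to the graded-automorphism groups of $P$ and $Q$: replacing $q_i$ by $q_i + \lambda T^a q_{i'}$ when $\deg(q_{i'}) + a = \deg(q_i)$ (and scalar rescalings, permutations), and similarly for the $p_j$. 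Under such a column/row operation the matrix transforms by the corresponding elementary operation, and homogeneity is preserved throughout.

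The main step is the following reduction, carried out by induction on the number of nonzero entries. If $A^\varphi$ has a row or column with two or more nonzero entries, pick a nonzero entry $A^\varphi_{i,j}$ whose degree $\deg(p_j)-\deg(q_i)$ is \emph{minimal} among all nonzero entries lying in row $i$ or column $j$ (this minimality is what makes the needed monomials have nonnegative exponent, hence actually lie in $\k[\mathbb{R}_+]$). Then for any other nonzero entry $A^\varphi_{i,j'}$ in row $i$, we have $\deg(p_{j'})-\deg(q_i)\ge \deg(p_j)-\deg(q_i)$, so $A^\varphi_{i,j'}$ is a scalar multiple of $T^{\deg(p_{j'})-\deg(p_j)}\cdot A^\varphi_{i,j}$; subtracting the appropriate $\k[\mathbb{R}_+]$-multiple of column $j$ from column $j'$ (a legal graded basis change on $P$) clears that entry. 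Doing this for all such $j'$ clears the rest of row $i$; symmetrically clear the rest of column $j$ using row operations. One must check that clearing row $i$ does not reintroduce nonzero entries into column $j$ (it doesn't: column operations only touch columns $\ne j$ in rows that already had the relevant pattern — here is the one place to be slightly careful, but the entries created in column $j'$, row $i'\ne i$, are of the form old entry minus scalar times $A^\varphi_{i',j}T^{\deg(p_{j'})-\deg(p_j)}$, and since we then separately clear column $j$, the bookkeeping closes; alternatively, argue that each operation strictly decreases a suitable well-ordered complexity measure such as the multiset of degrees or the count of nonzero entries in the chosen row/column). After finitely many such steps the matrix has at most one nonzero entry in each row and each column.

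Once the matrix is in this form, read off the conclusion directly. Reorder the bases so that $A^\varphi = \mathrm{diag}(u_1,\dots,u_r)$ padded with zero rows and columns, where each $u_s$ is a nonzero monomial. Then $\ker\varphi$ is the free module on the $p_j$ with a zero column together with the elements $p_j$ — wait, more precisely: $\ker\varphi$ is generated by those basis vectors $p_j$ whose column is identically zero (a direct summand, clearly free and finitely generated), and there is no further kernel since a monomial $u_s\ne 0$ is a non-zero-divisor in the domain $\k[\mathbb{R}_+]$. Likewise $\im\varphi = \bigoplus_s u_s\cdot \k[\mathbb{R}_+] q_{i_s}$, and each $u_s\k[\mathbb{R}_+]\cong \k[\mathbb{R}_+]$ as a graded module (shifted in degree), so $\im\varphi$ is graded free on $\{u_s q_{i_s}\}_{s=1}^r$, finitely generated. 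I expect the only real obstacle to be the careful ordering of the clearing operations in the inductive step so that the complexity measure genuinely decreases and no infinite regress occurs; the minimality choice of the pivot degree, forced by the nonnegativity of exponents in $\mathbb{R}_+$, is exactly the device that prevents this.
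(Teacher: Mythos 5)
Your approach is essentially the same as the paper's: a Smith-normal-form-style reduction that exploits homogeneity, with a pivot chosen so that the required clearing operations use only nonnegative exponents. (The paper picks the pivot by first maximizing $\deg(q_{i_0})$ over nonzero rows and then minimizing $\deg(p_{j_0})$ within that row; you pick a pivot whose entry degree is a local minimum in its row and column, e.g.\ a global minimum. Both choices deliver the needed nonnegativity of exponents, so this difference is cosmetic.) Two small points deserve correction, though.

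First, your worry that clearing row $i$ might ``reintroduce nonzero entries into column $j$'' is unfounded: the column operation $C_{j,j'}$ modifies only column $j'\neq j$, so the pivot column is untouched by the column clearing, and symmetrically the pivot row is untouched by the row clearing. No bookkeeping is needed for this step.

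Second, and more substantively, the termination argument as stated is not quite right. The total count of nonzero entries in the matrix is not monotone under these operations: clearing $A_{i,j'}$ via a column operation adds a multiple of column $j$ to column $j'$, which can create new nonzero entries at positions $(i',j')$ with $i'\neq i$ when $A_{i',j}\neq 0$. So ``induction on the number of nonzero entries'' fails as a global measure, and ``multiset of degrees'' is not obviously monotone either. The correct termination argument, as the paper gives, is structural rather than numeric: after the first pass, row $i_0$ and column $j_0$ are zero away from the pivot, and one then checks that the operations used in subsequent passes (which pivot on $i_1\neq i_0$, $j_1\neq j_0$) never modify row $i_0$ or column $j_0$ again, precisely because $A_{i_0,j_1}=0$ and $A_{i_1,j_0}=0$. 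Thus one can induct on the number of rows not yet designated as pivot rows, which strictly decreases. Your final paragraph reading off the kernel and image from the normal form is correct as written.
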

\begin{proof}
Since $P$ and $Q$ are graded free and finitely generated, there exists a basis $\{p_1,\ldots,p_n\}$ of $P$ and a basis $\{q_1,\ldots,q_m\}$ of $Q$, both consisting of homogeneous elements. Let $A$ be the matrix of $\varphi$ with respect to these bases. We must show that there exist invertible matrices $B$ and $C$ satisfying \eqref{eq:mapmatrixcondition} such that the matrix $BAC$ has at most one nonzero entry in each row and column; precisely, \eqref{eq:mapmatrixcondition} requires that $B$ satisfies $\deg(B_{i,j}) = \deg(q_j) - \deg(q_i) \geq 0$ or $B_{i,j} = 0$ for all $1 \leq i,j \leq m$ and that $C$ satisfies the same condition with respect to the $p_i$.

Let $q_{i_0}$ be the basis element such that $\deg(q_{i_0})$ is maximal among the degrees of the $q_i$ such that the $i$-th row of $A$ is nonzero, and let $p_{j_0}$ be the basis element such that $\deg(p_{j_0})$ is minimal among the degrees of the $p_j$ such that $A_{i_0,j} \neq 0$. By Lemma~\ref{lem:matrixofmap}, we have 
$$A_{i_0,j_0} = l_0 \cdot T^{\deg(p_{j_0}) - \deg(q_{i_0})}$$
for some $l_0 \in \k$. For any $j \neq j_0$ such that $A_{i_0,j} \neq 0$, we have $A_{i_0,j} = l \cdot T^{\deg(p_j) - \deg(q_{i_0})}$ for some $l \in \k$. There is a column operation given by a matrix $C$ with $C_{j_0,j} = -(l / l_0) \cdot T^{\deg(p_j) - \deg(p_{j_0})}$, ones on the diagonal, and zeros elsewhere, and we have $(AC)_{i_0,j} = 0$. Applying these column operations for all $j \neq j_0$ such that $A_{i_0,j} \neq 0$ arrives at a matrix with $i_0$-th row equal to $0$ except for the $(i_0,j_0)$-entry. Rename $C$ to be the product of all these column operations, and set $A' = AC$. Now, for any $i \neq i_0$ such that $A'_{i,j_0} \neq 0$, we have $A'_{i,j_0} = l \cdot T^{\deg(p_{j_0}) - \deg(q_i)}$ for some $l \in \k$. There is a row operation given by a matrix $B$ with $B_{i,i_0} = -(l/l_0) \cdot T^{\deg(q_{i_0}) - \deg(q_i)}$, ones on the diagonal, and zeros elsewhere, and we have $(BA')_{i,j_0} = 0.$ Applying these row operations for all $i \neq i_0$ such that $A'_{i,j_0} \neq 0$ arrives at a matrix with $j_0$-th column equal to $0$ except for the $(i_0,j_0)$-entry. Rename $B$ to be the product of all these row operations. Then the matrix $BAC$ has the property that the $i_0$-th row and the $j_0$-th column are zero except for the $(i_0,j_0)$-entry.

We may now iterate this process to produce the desired result. Indeed, relabel $BAC$ by $A$, let $q_{i_1}$ denote the basis element such that $\deg(q_{i_1})$ is maximal among the degrees of the $q_i \neq q_{i_0}$ such that the $i$-th row of $A$ is nonzero, and let $p_{j_1}$ denote the basis element such that $\deg(p_{j_1})$ is minimal among the degrees of the $p_j \neq p_{j_0}$ such that $A_{i_1,j} \neq 0$. Then the row and column operations as described above that zero out all entries in the $i_1$-row and $j_1$-column (except for the $(i_1,j_1)$-entry) do not affect the $i_0$-row and the $j_0$-column. Repeating this process completes the construction.
\end{proof}

\subsection{Bars} \label{subsec:bars}

A common notation for finitely presented $\k[\mathbb{R}_+]$-modules uses bars $(a,b)$ for $0 \leq a \leq b \leq \infty$, where $(a,b)$ is shorthand for the module generated by a single generator $\s$ of degree $a$ such that $T^{b-a}\cdot \s = 0$; precisely,
$$(a,b) = \G^a\k[\mathbb{R}_+]/T^{b-a},$$
where $\G^a$ denotes a degree shift by $a$. We allow $b = \infty$, in which case
$$(a,\infty) = \G^a\k[\mathbb{R}_+].$$

The following classification result is equivalent to the classification of finitely presented persistence vector spaces in \cite[Prop.~3.12,~3.13]{carlsson2014topological}; see Remark~\ref{rmk:equivtoacta}. The essential step in the setting of $\k[\mathbb{R}_+]$-modules is Lemma~\ref{lem:finitelygeneratedfree}.

\begin{prp} \label{prp:classification}
Every $\mathbb{R}_+$-graded finitely presented $\k[\mathbb{R}_+]$-module $M$ is isomorphic to a direct sum of the form
$$M \cong \oplus_{i=1}^n (a_i,b_i),$$
where $0 \leq a_i < b_i \leq \infty$. Moreover, this decomposition is unique, i.e. if $M$ is isomorphic to another such direct sum then the number $n$ of bars is the same and the set of pairs $(a_i,b_i)$ that occur, with multiplicities, is the same.
\hfill$\square$
\end{prp}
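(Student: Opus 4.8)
The plan is to prove Proposition~\ref{prp:classification} in two stages: first existence of a bar decomposition, then uniqueness.

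\textbf{Existence.} Since $M$ is finitely presented, there is a homomorphism of graded free finitely generated $\k[\mathbb{R}_+]$-modules $\varphi : P \rightarrow Q$ with $M \cong \coker(\varphi) = Q/\im(\varphi)$. (Here I use that a finite presentation of a graded module may be taken with homogeneous generators and relations, so $P$ and $Q$ have homogeneous bases.) Now apply Lemma~\ref{lem:finitelygeneratedfree} to $\varphi$: there exist homogeneous bases $\{p_1,\dots,p_n\}$ of $P$ and $\{q_1,\dots,q_m\}$ of $Q$ so that the matrix of $\varphi$ has at most one nonzero entry in each row and each column. After reindexing, this means $\varphi(p_j) = l_j T^{e_j} q_j$ for $j = 1,\dots,r$ (with $l_j \in \k^\times$, $e_j = \deg(p_j) - \deg(q_j) \geq 0$), and $\varphi(p_j) = 0$ for $j > r$, while $q_{r+1},\dots,q_m$ do not appear in the image. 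Rescaling the $p_j$ we may take $l_j = 1$. Then $Q = \bigoplus_{i=1}^m \G^{\deg(q_i)}\k[\mathbb{R}_+]$ and $\im(\varphi) = \bigoplus_{j=1}^r T^{e_j}\cdot \G^{\deg(q_j)}\k[\mathbb{R}_+]$, so the quotient splits as a direct sum over $i$:
$$M \cong Q/\im(\varphi) \cong \bigoplus_{j=1}^r \G^{\deg(q_j)}\k[\mathbb{R}_+]/T^{e_j} \,\oplus\, \bigoplus_{i=r+1}^m \G^{\deg(q_i)}\k[\mathbb{R}_+].$$
In bar notation, the first $r$ summands are $(\deg(q_j), \deg(q_j) + e_j)$ and the remaining ones are $(\deg(q_i), \infty)$. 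Discarding any summands with $e_j = 0$ (which are zero modules, i.e. $a_j = b_j$), we obtain the claimed decomposition with all $a_i < b_i$.

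\textbf{Uniqueness.} The cleanest route is to read off the bars $(a_i,b_i)$ from functorial invariants of $M$, via the equivalence $A^{-1}$ with persistence vector spaces (Remark~\ref{rmk:artinrees}). For $s \leq t$ in $\mathbb{R}_+$, let $\rk(s,t)$ denote the rank of the structure map $M_s \rightarrow M_t$ (multiplication by $T^{t-s}$) on homogeneous parts; these numbers depend only on the isomorphism class of $M$. A single bar $(a,b)$ contributes $1$ to $\rk(s,t)$ exactly when $a \leq s \leq t < b$, and $0$ otherwise. Hence, writing $N_{a,b}$ for the multiplicity of the bar $(a,b)$ in a given decomposition, we get $\rk(s,t) = \sum_{a \leq s,\, t < b} N_{a,b}$. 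Standard inclusion–exclusion (the "rank invariant" / M\"obius inversion argument, taking appropriate one-sided limits in $s$ from below and in $t$ from above to handle the half-open condition, and treating $b = \infty$ separately) recovers each $N_{a,b}$ from the function $\rk(\cdot,\cdot)$. Since $\rk$ is an invariant of $M$, the multiset $\{(a_i,b_i)\}$ is independent of the chosen decomposition, and in particular $n = \sum N_{a,b}$ is determined. Alternatively, one may simply cite the known classification of finitely presented persistence vector spaces \cite[Prop.~3.12,~3.13]{carlsson2014topological} and transport uniqueness across the equivalence $A$ of Remark~\ref{rmk:artinrees}, as indicated in Remark~\ref{rmk:equivtoacta}.

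The main obstacle is existence, and it has essentially already been absorbed into Lemma~\ref{lem:finitelygeneratedfree}: the nontrivial point is that $\k[\mathbb{R}_+]$ is not a PID, so one cannot invoke a Smith normal form directly, but the row/column reduction in that lemma is exactly the replacement. Given that lemma, existence is bookkeeping. For uniqueness, the only mild subtlety is the half-open nature of the intervals $a \leq s \leq t < b$, which forces the M\"obius-inversion step to use left/right limits rather than evaluation at single parameter values; but since finitely presented modules have only finitely many "breakpoints," the relevant functions are locally constant and these limits are unproblematic.
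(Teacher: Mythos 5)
Your proof is correct and follows the same route the paper intends: the paper leaves the proposition as a citation to \cite[Prop.~3.12,~3.13]{carlsson2014topological} transported across the Artin--Rees equivalence, noting only that Lemma~\ref{lem:finitelygeneratedfree} is the essential ingredient, and your existence argument is exactly the spelling-out of that (normal form for the presentation matrix, read off the bars). Your uniqueness via the rank invariant is a clean self-contained alternative to the citation, and the care you take with the half-open intervals and one-sided limits is the right thing to flag, but both it and the citation route amount to the same standard classification.
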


Due to the above classification, for many computations it suffices to understand how single bars interact with each other. In this paper we use the following computations of the tensor product and $Tor$ of bars; equivalent computations appear in \cite{BMpersistentkunneth}\cite{MR3734662}.

\begin{prp} \label{prp:tensortorbars}
Consider the bars $(a,b)$ and $(c,d)$. Then there are isomorphisms of $\mathbb{R}_+$-graded $\k[\mathbb{R}_+]$-modules
\begin{equation} \label{eq:tensorbars}
(a,b) \otimes_{\k[\mathbb{R}_+]} (c,d) \cong (a+c, \min\{a+d,b+c\})\\
\end{equation}
and
\begin{equation} \label{eq:torbars}
Tor_1((a,b),(c,d)) \cong (\max \{ a+d, b+c\}, b+d).
\end{equation}
Moreover, for any $\mathbb{R}_+$-graded finitely presented $\k[\mathbb{R}_+]$-modules $M$ and $N$, we have $Tor_n(M,N) = 0$ for $n \geq 2$.
\end{prp}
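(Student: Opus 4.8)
The plan is to derive all three assertions from a single length-one free resolution of a bar together with right-exactness of $\otimes$ and the description of $Tor$ via such a resolution. The starting observation is that, for $d < \infty$, multiplication by $T^{d-c}$ is injective (because $\k[\mathbb{R}_+]$ is an integral domain), so
\[
0 \to \G^d\k[\mathbb{R}_+] \xrightarrow{\;\cdot T^{d-c}\;} \G^c\k[\mathbb{R}_+] \to (c,d) \to 0
\]
is a free resolution of $(c,d)$ of length one; for $d = \infty$ the bar $(c,\infty) = \G^c\k[\mathbb{R}_+]$ is already free, and likewise $(a,\infty) = \G^a\k[\mathbb{R}_+]$ is free. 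I would dispatch these degenerate cases first: when $b = \infty$ or $d = \infty$ one of the modules is free, so $Tor_n$ vanishes for $n \geq 1$ and $Tor_0$ is a degree shift of a free module, and one checks by hand that the right-hand sides of \eqref{eq:tensorbars} and \eqref{eq:torbars} give the right answers under the convention $(\infty,\infty) = 0$. So from then on assume $b, d < \infty$.

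Next I would tensor the displayed resolution of $(c,d)$ with $(a,b)$. Using the identification $(a,b)\otimes_{\k[\mathbb{R}_+]}\G^s\k[\mathbb{R}_+] \cong \G^s(a,b) = (a+s,\,b+s)$, this yields the two-term complex
\[
(a+d,\, b+d) \xrightarrow{\;\cdot T^{d-c}\;} (a+c,\, b+c)
\]
in homological degrees $1$ and $0$, so $(a,b)\otimes_{\k[\mathbb{R}_+]}(c,d)$ is its cokernel and $Tor_1((a,b),(c,d))$ its kernel. The cokernel is $\G^{a+c}\k[\mathbb{R}_+]$ modulo the ideal generated by $T^{b-a}$ (from the presentation of $(a+c,b+c)$) and $T^{d-c}$ (from the image of the generator of $(a+d,b+d)$); since $(T^{b-a},T^{d-c}) = (T^{\min\{b-a,d-c\}})$, this is the bar $(a+c,\,\min\{a+d,\,b+c\})$, which is \eqref{eq:tensorbars}.

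For \eqref{eq:torbars} I would compute the kernel directly: writing a general element of $(a+d,b+d)$ as $x\s$ with $x \in \k[\mathbb{R}_+]$ modulo $T^{b-a}\s$, it maps to $0$ precisely when $xT^{d-c} \in (T^{b-a})$, i.e. (cancelling $T$'s in the domain $\k[\mathbb{R}_+]$) for all $x$ when $d-c \geq b-a$, and when $x \in (T^{(b-a)-(d-c)})$ otherwise. In the first case the kernel is all of $(a+d,b+d)$; in the second it is the submodule generated by $T^{(b-a)-(d-c)}\s$, an element of degree $b+c$ annihilated by exactly $T^{d-c}$, hence the bar $(b+c,\,b+d)$. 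Both cases combine into $(\max\{a+d,\,b+c\},\,b+d)$. Finally, for the vanishing of higher $Tor$: given a finitely presented $\mathbb{R}_+$-graded $\k[\mathbb{R}_+]$-module $M$, choose a presentation $F_1 \xrightarrow{\psi} F_0 \to M \to 0$ with $F_0,F_1$ graded free and finitely generated; by Lemma~\ref{lem:finitelygeneratedfree} the submodule $\im\psi \subseteq F_0$ is graded free and finitely generated, so $0 \to \im\psi \to F_0 \to M \to 0$ is a free resolution of length one, whence $M$ has projective dimension at most one and $Tor_n(M,N) = 0$ for all $n \geq 2$ and every $N$.

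Almost all of this is bookkeeping of degree shifts; the only genuine subtlety is the short case distinction in the $Tor_1$ kernel computation, together with handling the degenerate $b = \infty$ or $d = \infty$ cases consistently. The one real conceptual input — that bars, and hence all finitely presented modules, have projective dimension $\leq 1$ over the non-Noetherian ring $\k[\mathbb{R}_+]$ — is exactly what Lemma~\ref{lem:finitelygeneratedfree} supplies.
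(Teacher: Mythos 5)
Your proof is correct and follows essentially the same route as the paper: both hinge on the length-one free resolution $0 \to (d,\infty) \to (c,\infty) \to (c,d) \to 0$ (you resolve the second argument where the paper resolves the first, which is symmetric), and both carry out the same two-case kernel computation for $Tor_1$. The two minor organizational differences are worth noting: you read off the tensor product as the cokernel of the two-term complex produced by the resolution, whereas the paper computes it separately via the direct chain of isomorphisms $\G^a\k[\mathbb{R}_+]/T^{b-a}\otimes\G^c\k[\mathbb{R}_+]/T^{d-c}\cong\G^{a+c}\k[\mathbb{R}_+]/(T^{b-a},T^{d-c})$; and for the vanishing of $Tor_n$, $n\geq 2$, the paper first decomposes $M,N$ into bars via Proposition~\ref{prp:classification} and then appeals to the bar resolution, while you apply Lemma~\ref{lem:finitelygeneratedfree} directly to a finite presentation of $M$ to produce a length-one free resolution without invoking the classification. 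Your version of that last step is slightly more self-contained and makes the underlying structural fact (global dimension $\leq 1$ on finitely presented graded modules) explicit, but both arguments ultimately rest on the same Lemma~\ref{lem:finitelygeneratedfree}.
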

\begin{proof}
We compute $(a,b) \otimes_{\k[\mathbb{R}_+]} (c,d) = \G^a\k[\mathbb{R}_+]/T^{b-a} \otimes \G^c\k[\mathbb{R}_+]/T^{d-c} \cong\\ \G^{a+c}\k[\mathbb{R}_+]/ \langle T^{b-a}, T^{d-c} \rangle = \G^{a+c}\k[\mathbb{R}_+]/ T^{\min\{b-a, d-c\}} = (a+c, a+c + \min\{b-a, d-c\}) = (a+c, \min\{a+d, b+c\}),$ as claimed.

Next, we compute \eqref{eq:torbars}. A projective resolution of $(a,b)$ is given by the exact sequence
\begin{equation} \label{eq:resolutionbar}
0 \rightarrow (b,\infty) \rightarrow (a,\infty) \rightarrow (a,b) \rightarrow 0.
\end{equation}
Removing the $(a,b)$ term and tensoring with $(c,d)$ yields the complex
$$0 \rightarrow (b+c,b+d) \rightarrow (a+c,a+d) \rightarrow 0.$$
The homology at the $(b+c,b+d)$ term is the kernel of the map to $(a+c,a+d)$. If $b+c \geq a+d$ then the map is $0$ and hence the homology is $(b+c,b+d)$. If $b+c \leq a+d$ then the kernel is $(a+d,b+d)$. So, in both cases, the homology is $(\max\{a+d,b+c\},b+d)$, proving \eqref{eq:torbars}.

To prove the final statement of the proposition, first apply Proposition~\ref{prp:classification} to both $M$ and $N$ to write them as direct sums of bars. Then the statement follows since the projective resolution \eqref{eq:resolutionbar} is zero to the left of the $(b,\infty)$ term.
\end{proof}

\subsection{Simplicial $\k[\mathbb{R}_+]$-modules} \label{sec:simplicialmodules}

Let $\k[\mathbb{R}_+]$-Mod denote the category with objects the $\mathbb{R}_+$-graded $\k[\mathbb{R}_+]$-modules and maps the graded module homomorphisms.

\begin{dfn} \label{dfn:simplicialmodule}
A {\bf $\mathbb{R}_+$-graded simplicial $\k[\mathbb{R}_+]$-module} $M$ is a simplicial object in $\k[\mathbb{R}_+]$-Mod, i.e. a contravariant functor $M : \ul{\Delta} \rightarrow \k[\mathbb{R}_+]$-Mod, where $\ul{\Delta}$ is the simplex category (see Section~\ref{sec:simplicialsets}).
\end{dfn}

Concretely, $M$ is a collection of $\mathbb{R}_+$-graded $\k[\mathbb{R}_+]$-modules $M_n = M([n])$ for $n \geq 0$ together with $\mathbb{R}_+$-graded $\k[\mathbb{R}_+]$-module maps $M_{\varphi} : M_n \rightarrow M_m$ for every map $\varphi : [m] \rightarrow [n]$ in $\ul{\Delta}$ that together satisfy the usual functorial properties. We call $M_n$ the \emph{$n$-simplices} of $M$. The images $M_{d^{\Delta}_i}$ and $M_{s^{\Delta}_i}$ of the coface maps $d^{\Delta}_i$ and the codegeneracy maps $s^{\Delta}_i$ are called face maps and degeneracy maps, respectively, and for convenience we often denote them by $d_i$ and $s_i$.

There is a natural chain complex associated to a simplicial module.

\begin{dfn} \label{dfn:alternatingfacemapscomplex}
The {\bf alternating face maps complex} $C_*(M)$ associated to a graded simplicial $\k[\mathbb{R}_+]$-module $M$ is a chain complex of $\mathbb{R}_+$-graded $\k[\mathbb{R}_+]$-modules which in level $n \geq 0$ is the module
$$C_n(M) = M_n$$
and where the differential is given by the alternating sum of the face maps, i.e.
\begin{align*}
d : C_{n+1}(M) &\rightarrow C_n(M)\\
x &\mapsto \sum_{i=0}^{n+1} (-1)^id_i(x).
\end{align*}
\end{dfn}

\subsection{The Algebraic Persistent K\"unneth Theorem} \label{sec:algebraickunneth}

For graded simplicial $\k[\mathbb{R}_+]$-modules $M$ and $N$ (Definition~\ref{dfn:simplicialmodule}), the tensor product $M \otimes N$ is defined level-wise, i.e.
$$(M \otimes N)_n := M_n \otimes_{\k[\mathbb{R}_+]} N_n$$
for $n \geq 0$. For a morphism $\varphi : [m] \rightarrow [n]$ in $\ul{\Delta}$, the corresponding map in $M \otimes N$ is given by
\begin{align*}
(M \otimes N)_{\varphi} : M_n \otimes N_n &\rightarrow M_m \otimes N_m\\
x \otimes y &\mapsto M_{\varphi}(x) \otimes N_{\varphi}(y).
\end{align*}
We establish in Theorem~\ref{thm:kunnethformodules} a K\"unneth formula for the homology of $C_*(M \otimes N)$ in terms of $C_*(M)$ and $C_*(N)$, where $C_*(-)$ is the alternating face maps complex (Definition~\ref{dfn:alternatingfacemapscomplex}).

In level $n \geq 0$, we have
$$C_n(M \otimes N) = C_n(M) \otimes_{\k[\mathbb{R}_+]} C_n(N).$$
There is another chain complex of $\k[\mathbb{R}_+]$-modules associated to the pair $M,N,$ given by the usual tensor product of chain complexes ${C_*(M) \otimes C_*(N)}$, which in level $n$ is the $\k[\mathbb{R}_+]$-module
$$(C_*(M) \otimes C_*(N))_n = \bigoplus_{i+j=n} C_i(M) \otimes_{\k[\mathbb{R}_+]} C_j(N).$$
The Eilenberg-Zilber theorem (see \cite{eilenberg1953products}, \cite[Thm.~8.5.1]{weibel1995introduction}) says that there is a natural isomorphism on homology
\begin{equation} \label{eq:eilenbergzilberiso}
H_*(C_*(M) \otimes C_*(N)) \cong H_*(C_*(M \otimes N)).
\end{equation}

\begin{dfn}
A simplicial module $M$ is {\bf graded free} (resp. {\bf finitely generated}) if, for all $n \geq 0$, the module $M_n$ is graded free (resp. finitely generated).
\end{dfn}

We now prove Theorem~\ref{thm:intro-kunnethformodules} from the introduction.

\begin{thm} {\bf (Algebraic Persistent K\"unneth Theorem)} \label{thm:kunnethformodules}
Let $M,N$ be $\mathbb{R}_+$-graded free finitely generated simplicial $\k[\mathbb{R}_+]$-modules. Then for $n \geq 0$ there is a short exact sequence
\begin{align*}
0 &\rightarrow \bigoplus_{i + j = n} H_i(C_*(M)) \otimes_{\k[\mathbb{R}_+]} H_j(C_*(N)) \rightarrow H_n(C_*(M \otimes_{\k[\mathbb{R}_+]} N))\\
&\rightarrow \bigoplus_{i+j = n-1} Tor_1(H_i(C_*(M)), H_j(C_*(N))) \rightarrow 0
\end{align*}
which is natural in both $M$ and $N$. Moreover, the sequence splits, but not naturally.
\end{thm}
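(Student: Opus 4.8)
The strategy is the classical one for the Künneth short exact sequence over a PID, adapted to the fact that $\k[\mathbb{R}_+]$ is not a PID but does have the key finiteness/freeness property of Lemma~\ref{lem:finitelygeneratedfree}. By the Eilenberg--Zilber isomorphism \eqref{eq:eilenbergzilberiso} it suffices to prove the short exact sequence with $H_n(C_*(M \otimes N))$ replaced by $H_n(C_*(M) \otimes C_*(N))$, so from now on I work with the ordinary tensor product of chain complexes $C_* := C_*(M)$ and $D_* := C_*(N)$. Each $C_n = M_n$ and $D_n = N_n$ is graded free and finitely generated. The plan is: (i) split each $C_n$ as $Z_n \oplus B'_n$ where $Z_n = \ker(d : C_n \to C_{n-1})$ and $B'_n$ maps isomorphically onto $B_{n-1} = \operatorname{im}(d : C_n \to C_{n-1})$; by Lemma~\ref{lem:finitelygeneratedfree} both $Z_n$ and $B_{n-1}$ are graded free and finitely generated, so this splitting exists as a splitting of graded $\k[\mathbb{R}_+]$-modules; (ii) regard $Z_*$ (with zero differential) and $B_*$ (shifted, with zero differential) as subcomplexes/quotients fitting into the short exact sequence of chain complexes $0 \to Z_* \to C_* \to B_*[-1] \to 0$, which is degreewise split; (iii) tensor with $D_*$ — because the sequence is degreewise split it stays exact — getting $0 \to Z_* \otimes D_* \to C_* \otimes D_* \to B_*[-1] \otimes D_* \to 0$; (iv) write the associated long exact sequence in homology. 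Since $Z_*$ and $B_*$ have zero differential and consist of graded free modules, $H_n(Z_* \otimes D_*) = \bigoplus_{i+j=n} Z_i \otimes H_j(D)$ and similarly for $B_*$ (flatness of free modules lets the tensor pass through homology in the $D$ variable). The connecting homomorphism $\partial : H_n(B_*[-1] \otimes D_*) = \bigoplus_{i+j=n-1} B_i \otimes H_j(D) \to H_{n-1}(Z_* \otimes D_*) = \bigoplus_{i+j=n-1} Z_i \otimes H_j(D)$ is induced by the inclusion $B_i \hookrightarrow Z_i$ tensored with $H_j(D)$.

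From this long exact sequence one extracts, for each $n$, the short exact sequence
$$0 \to \operatorname{coker}(\partial_{n}) \to H_n(C_* \otimes D_*) \to \ker(\partial_{n-1}) \to 0.$$
It remains to identify $\operatorname{coker}(\partial_n) = \bigoplus_{i+j=n} H_i(C) \otimes H_j(D)$ and $\ker(\partial_{n-1}) = \bigoplus_{i+j=n-1} Tor_1(H_i(C), H_j(D))$. For this, fix $i$ and $j$ and use the short exact sequence $0 \to B_i \to Z_i \to H_i(C) \to 0$; since $B_i$ is graded free, this is in fact a free resolution of $H_i(C)$ of length one. Tensoring with $H_j(D)$ and taking homology gives exactly $\operatorname{coker}(B_i \otimes H_j(D) \to Z_i \otimes H_j(D)) = H_i(C) \otimes H_j(D)$ and $\ker(B_i \otimes H_j(D) \to Z_i \otimes H_j(D)) = Tor_1(H_i(C), H_j(D))$ (and $Tor_n = 0$ for $n \geq 2$, consistent with Proposition~\ref{prp:tensortorbars}). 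Assembling over all $i+j = n$ (resp. $i+j = n-1$) gives the two outer terms. Naturality in $M$ and $N$ is automatic: every construction — the Eilenberg--Zilber map, the long exact sequence, the identification of $Tor$ via the length-one free resolution — is functorial, the only non-canonical choice being the splitting $C_n \cong Z_n \oplus B'_n$, which does not enter the statement of the sequence itself.

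For the splitting: because $B_{n-1}$ is graded free (Lemma~\ref{lem:finitelygeneratedfree}) and the surjection $d : C_n \twoheadrightarrow B_{n-1}$ is a graded module map, it admits a graded section, so $C_* \otimes D_* \cong (Z_* \otimes D_*) \oplus (B_*[-1] \otimes D_*)$ as graded modules (not as complexes), and one argues as in the classical case (e.g.\ \cite[Thm.~3.6.3]{weibel1995introduction}) that this induces a splitting of the Künneth sequence; alternatively, lift a homogeneous basis of each $H_i(C)$ to $Z_i$, lift a homogeneous basis of $\operatorname{im}(B_i \to Z_i \otimes \cdots)$, etc. — the point is only that all the relevant modules are graded free, so sections exist in the graded category. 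Non-naturality of the splitting is inherited verbatim from the classical situation and is witnessed by the same standard examples.

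I expect the main obstacle to be purely bookkeeping rather than conceptual: everything rests on Lemma~\ref{lem:finitelygeneratedfree} guaranteeing that $Z_n$, $B_n$ and hence the kernels/images appearing throughout are graded free and finitely generated, which is precisely what replaces the PID hypothesis and makes $0 \to B_i \to Z_i \to H_i(C) \to 0$ a genuine length-one free resolution; granting that, the argument is the textbook Künneth proof. The one point requiring a little care is that all splittings and sections must be taken in the category of $\mathbb{R}_+$-graded modules (using homogeneous bases, as in Lemma~\ref{lem:matrixofmap}), so that the resulting isomorphisms respect the grading; this is where one must be slightly more attentive than in the ungraded PID case, but it causes no real difficulty since graded freeness is available at every stage.
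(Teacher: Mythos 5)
Your proof is correct and follows the same route as the paper: both reduce via Eilenberg--Zilber to the tensor product of chain complexes, then observe that Lemma~\ref{lem:finitelygeneratedfree} supplies the graded-freeness of $Z_n$ and $B_n$ that the textbook PID proof actually needs, at which point the standard argument (which you spell out and the paper delegates to the proof of Hatcher's Theorem~3B.5) goes through unchanged.
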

\begin{proof}
For $n \geq 0$, the submodules $B_n(M)$ and $Z_n(M)$ of $C_n(M) = M_n$ consisting of boundaries and cycles, respectively, are graded free by Lemma~\ref{lem:finitelygeneratedfree}, and similarly for the boundaries and cycles in $C_n(N)$. Hence, although $\k[\mathbb{R}_+]$ is not a principal ideal domain, the proof of the  K\"unneth short exact sequence for the homology of the tensor product of chain complexes $C_*(M) \otimes C_*(N)$ still goes through; see for example the proof of \cite[Theorem~3B.5]{Hatcher}. That is, for $n \geq 0$ there is a natural short exact sequence
\begin{align*}
0 &\rightarrow \bigoplus_{i + j = n} H_i(C_*(M)) \otimes_{\k[\mathbb{R}_+]} H_j(C_*(N)) \rightarrow H_n(C_*(M) \otimes C_*(N))\\
&\rightarrow \bigoplus_{i+j = n-1} Tor_1(H_i(C_*(M)), H_j(C_*(N))) \rightarrow 0
\end{align*}
which splits (but the splitting is not natural). The theorem then follows from the natural Eilenberg-Zilber isomorphism \eqref{eq:eilenbergzilberiso}.
\end{proof}

We can compute $H_n(C_*(M \otimes_{\k[\mathbb{R}_+]} N))$ using the K\"unneth short exact sequence in Theorem~\ref{thm:kunnethformodules} together with the classification result Proposition~\ref{prp:classification} and the calculations of tensor product and Tor of bars in Proposition~\ref{prp:tensortorbars}.

\section{$\mathbb{R}_+$-filtered simplicial sets} \label{sec:filteredsimplicialsets}

We define a notion of a simplicial set filtered by $\mathbb{R}_+ = [0,\infty)$ (see Section~\ref{sec:simplicialsets} for a review of simplicial sets). Its homology is a $\mathbb{R}_+$-graded module over the ring $\k[\mathbb{R}_+]$, whose theory we discuss in Section~\ref{sec:modules}. In Section~\ref{sec:metricspaces} we associate a $\mathbb{R}_+$-filtered simplicial set to a metric space $(X,d)$ and we define persistent homology of the metric space $PH_*(X)$ to be the homology of the filtered simplicial set. This is a rephrasing of the Vietoris-Rips construction; see Remark~\ref{rmk:vietorisrips}.

In this section we prove the K\"unneth theorem for $\mathbb{R}_+$-filtered simplicial sets (Theorem~\ref{thm:kunnethfilteredsimplicialsets}), which follows easily from the algebraic K\"unneth theorem for simplicial $\k[\mathbb{R}_+]$-modules (Theorem~\ref{thm:kunnethformodules}).

\begin{dfn} \label{dfn:filteredsimplicialset}
A {\bf $\mathbb{R}_+$-filtered simplicial set} $({\bf X},l)$ is a simplicial set ${\bf X}$ together with maps $l : {\bf X}_n \rightarrow \mathbb{R}_+$ for all $n \geq 0$ satisfying $l({\bf X}_{\varphi}(\s)) \leq l(\s)$ for all simplices $\s \in {\bf X}_n$ and all morphisms $\varphi : [m] \rightarrow [n]$ in $\ul{\Delta}$.
\end{dfn}

\begin{rmk}
A natural notion of a $\mathbb{R}_+$-filtered simplicial set would be a collection of simplicial sets $\{{\bf X}^{(t)}\}_{t \in \mathbb{R}_+}$ and inclusion maps ${\bf X}^{(t)} \hookrightarrow {\bf X}^{(t')}$ for $t \leq t'$. Given a $\mathbb{R}_+$-filtered simplicial set $({\bf X},l)$ as in Definiton~\ref{dfn:filteredsimplicialset}, for any $t \in \mathbb{R}_+$, the collection of simplices $\s \in {\bf X}_*$ such that $l(\s) \leq t$ forms a simplicial set ${\bf X}^{(t)}$, and if $t \leq t'$ then there is an inclusion ${\bf X}^{(t)} \hookrightarrow {\bf X}^{(t')}$.

The persistent homology $PH_*({\bf X},l)$ (see Definition~\ref{dfn:persistenthomologyfilteredsimplicialset}) is a $\mathbb{R}_+$-graded $\k[\mathbb{R}_+]$-module with homogeneous degree-$t$ part equal to the homology of the simplicial set ${\bf X}^{(t)}$, i.e. $PH_*^{(t)}({\bf X},l) = H_*({\bf X}^{(t)};\k).$
\end{rmk}

Let $\mathbb{R}_+$-sSet denote the category\footnote{Alternatively, we may define $\mathbb{R}_+$-sSet as follows. Consider the category of pairs $(A,l)$ of sets $A$ and maps $l : A \rightarrow \mathbb{R}_+$, where the morphisms $f : (A,l) \rightarrow (A',l')$ are the set maps $f : A \rightarrow A'$ such that $l'(f(a)) \leq l(a)$ for all $a \in A$. Such a pair $(A,l)$ is called a $\mathbb{R}_+$-filtered set, as defined in \cite[Sec.~3.4]{carlsson2014topological}, and we denote the category of these pairs by $\mathbb{R}_+$-Set. Then a $\mathbb{R}_+$-filtered simplicial set is a simplicial object in $\mathbb{R}_+$-Set, i.e. a contravariant functor $\ul{\Delta} \rightarrow \mathbb{R}_+$-Set, and $\mathbb{R}_+$-sSet is the category of these functors.} with objects the $\mathbb{R}_+$-filtered simplicial sets $({\bf X},l)$ and the morphisms $f : ({\bf X},l) \rightarrow ({\bf X}',l')$ given by the simplicial maps ${\bf X} \rightarrow {\bf X'}$ satisfying $l'(f(\s)) \leq l(\s)$ for all $\s \in {\bf X}.$

Let $\k[\mathbb{R}_+]$-sMod denote the category of $\mathbb{R}_+$-graded simplicial  $\k[\mathbb{R}_+]$-modules (Definition~\ref{dfn:simplicialmodule}).

We now define a functor
\begin{equation} \label{eq:persistentfunctor}
F : \mathbb{R}_+\text{-sSet} \rightarrow \k[\mathbb{R}_+]\text{-sMod}
\end{equation}
and then define the persistent homology of $({\bf X},l)$ (Definition~\ref{dfn:persistenthomologyfilteredsimplicialset}) to be the homology of $F({\bf X},l)$. Let the $n$-simplices of $F({\bf X},l)$ be the free module
\begin{equation} \label{eq:persistentchainsset}
F_n({\bf X},l) := \bigoplus_{\s \in {\bf X}_n} \G^{l(\s)} \k[\mathbb{R}_+] \cdot  \langle \s \rangle,
\end{equation}
where the notation $\G^a$ means a degree shift of $a \in \mathbb{R}_+$. So, $F$ is essentially the free $\k[\mathbb{R}_+]$-module functor, with additional degree shifts of the generators by the filtration function $l$. For a morphism $\varphi : [m] \rightarrow [n]$ in $\ul{\Delta}$, we define the corresponding morphism in $F({\bf X},l)$ on generators by
\begin{align*}
F({\bf X},l)_{\varphi} : F_n({\bf X},l) &\rightarrow F_m({\bf X},l)\\
\langle \s \rangle &\mapsto T^{l(\s) - l({\bf X}_{\varphi}(\s))}\langle {\bf X}_{\varphi}(\s) \rangle.
\end{align*}
Extending $\k[\mathbb{R}_+]$-linearly defines $F({\bf X},l)_{\varphi}$ on all of $F_n({\bf X},l)$. Note that $F({\bf X},l)_{\varphi}$ is a graded map due to the degree shifts in \eqref{eq:persistentchainsset}. In particular, the face maps on $F({\bf X},l)$ are given on generators by
\begin{equation*} \label{eq:facemapsonF}
d_i(\langle \s \rangle) := T^{l(\s) - l(d_i(\s))}\langle d_i(\s) \rangle,
\end{equation*}
where on the left we are writing $d_i = F({\bf X},l)_{d_i^{\Delta}}$ and on the right we are writing $d_i = {\bf X}_{d_i^{\Delta}}.$

For a map $f : ({\bf X},l) \rightarrow ({\bf X}',l')$ in $\mathbb{R}_+$-sSet, we define $F(f)$ on generators by
\begin{align*}
F(f) : F_n({\bf X},l) &\rightarrow F_n({\bf X}',l')\\
\langle \s \rangle &\mapsto T^{l(\s) - l'(f(\s))}\langle f(\s) \rangle.
\end{align*}
Note that the property $l(\s) - l'(f(\s)) \geq 0$, which holds by definition of a map\\ $f \in \mathbb{R}_+$-sSet, is crucial in the definition of $F(f)$; indeed, the factor $T^{l(\s) - l'(f(\s))}$ is necessary to make $F(f)$ a $\mathbb{R}_+$-graded map due to the degree shifts in \eqref{eq:persistentchainsset}.

\begin{dfn} \label{dfn:persistenthomologyfilteredsimplicialset}
Given $({\bf X},l) \in \mathbb{R}_+$-sSet, its {\bf persistent chain complex} $PC_*({\bf X},l)$ is the alternating face maps complex (Defintion~\ref{dfn:alternatingfacemapscomplex}) of the module $F({\bf X},l)$,
$$PC_*({\bf X},l) := C_*(F({\bf X},l)),$$
and its {\bf persistent homology} is the homology $PH_*({\bf X},l) := H_*(PC_*({\bf X},l)).$ For $t \in \mathbb{R}_+$, the {\bf time-$t$ persistent homology} $PH_*^{(t)}({\bf X},l)$ is the homogeneous degree $t$ part of $PH_*({\bf X},l)$.
\end{dfn}

Consider $\mathbb{R}_+$-filtered simplicial sets $({\bf X},l_X)$ and $({\bf Y},l_Y)$. We define their Cartesian product by
$$({\bf X},l_X) \times ({\bf Y},l_Y) := ({\bf X} \times {\bf Y}, l_X+ l_Y),$$
where the simplicial set ${\bf X} \times {\bf Y}$ has $n$-simplices $({\bf X} \times {\bf Y})_n = {\bf X}_n \times {\bf Y}_n$ and structure maps $({\bf X} \times {\bf Y})_{\varphi} = {\bf X}_{\varphi} \times {\bf Y}_{\varphi}$ for $\varphi \in \ul{\Delta}$, and where $(l_X + l_Y)(\s,\t) = l_X(\s) + l_Y(\t).$

We now establish the K\"unneth formula (Theorem~\ref{thm:kunnethfilteredsimplicialsets}) that computes persistent homology of the Cartesian product $PH_*({\bf X} \times {\bf Y}, l_X \times l_Y)$ when ${\bf X}_n$ and ${\bf Y}_n$ are finite sets for $n \geq 0$.

\begin{proof}[Proof of Theorem~\ref{thm:kunnethfilteredsimplicialsets}]
Recall the functor $F : \mathbb{R}_+\text{-sSet} \rightarrow \k[\mathbb{R}_+]\text{-sMod}$ from \eqref{eq:persistentfunctor}. There is a canonical isomorphism
$$F({\bf X} \times {\bf Y}, l_X + l_Y) \cong F({\bf X},l_X) \otimes_{\k[\mathbb{R}_+]} F({\bf Y},l_Y).$$
Then the theorem follows from the definition of persistent homology (Definition~\ref{dfn:persistenthomologyfilteredsimplicialset}) and the K\"unneth formula in $\k[\mathbb{R}_+]$-sMod (Theorem~\ref{thm:kunnethformodules}).
\end{proof}

\section{Metric spaces} \label{sec:metricspaces}

The main purpose of this paper is to investigate the persistent homology of the sum metric $d_X + d_Y$ on the product $X \times Y$ of metric spaces $(X, d_X)$ and $(Y, d_Y)$.

In Section~\ref{sec:persistenthomologymetricspaces}, we define persistent homology $PH_*(X)$ of a metric space $(X, d_X)$ as the persistent homology $PH_*({\bf X},l_X)$ of a $\mathbb{R}_+$-filtered simplicial set $({\bf X}, l_X)$ (Definitions~\ref{dfn:filteredsimplicialset},\,\ref{dfn:persistenthomologyfilteredsimplicialset}) associated to $(X,d_X)$. See Remark~\ref{rmk:vietorisrips} for a discussion of the equivalence of this definition to the usual Vietoris-Rips construction.

The K\"unneth theorem for $\mathbb{R}_+$-filtered simplicial sets (Theorem~\ref{thm:kunnethfilteredsimplicialsets}) computes\\ $PH_*({\bf X} \times {\bf Y}, l_X + l_Y)$, however the filtration function $l_{X \times Y}$ associated to the sum metric $d_X + d_Y$ is not equal to $l_X + l_Y$; it only satisfies the inequality (see \eqref{eq:filtrationinequality})
$$l_{X \times Y} \leq l_X + l_Y.$$
We discuss this in Section~\ref{sec:les}. The inequality provides a graded module homomorphism
\begin{equation}\label{eq:comparisonmapmetricintro}
PH_*(X, Y) := PH_*({\bf X} \times {\bf Y}, l_X + l_Y) \rightarrow PH_*(X \times Y)
\end{equation}
which fits into a long exact sequence (Proposition~\ref{prp:les}). The third term in the triple that forms the long exact sequence is denoted $\overline{PH}_*(X,Y)$.

In Section~\ref{sec:kunnethinlowdims} we prove the vanishing $\overline{PH}_n(X,Y) = 0$ in dimensions $n = 0,1,2$. This implies that the mapping \eqref{eq:comparisonmapmetricintro} is an isomorphism in dimensions $0,1$ and a surjection in dimension $2$. Then from the K\"unneth Theorem~\ref{thm:kunnethfilteredsimplicialsets} for $PH_n({\bf X} \times {\bf Y}, l_X + l_Y)$, we obtain our K\"unneth results for $PH_n(X \times Y)$ in low dimensions $n = 0,1,2$ (Theorem~\ref{thm:kunnethmain}).

In Section~\ref{sec:interleavingdistance} we bound the interleaving distance between the computable module $PH_*(X,Y)$ and the homology $PH_*(X \times Y)$ that we are interested in  by the minimum of the diameters of $X$ and $Y$; see Theorem~\ref{thm:interleavingdistance}.

In Section~\ref{sec:hammingcube}, we use Theorem~\ref{thm:kunnethmain} to compute $PH_n$ of the Hamming cube $\{0,1\}^k$ for $n = 0,1,2$ and for all $k \geq 1$. This involves a technical computation of the nontrivial kernel of the surjection \eqref{eq:comparisonmapmetricintro} in dimension $2$  for $X = \{0,1\}$ and $Y = \{0,1\}^{k-1}$.

\subsection{Persistent homology} \label{sec:persistenthomologymetricspaces}

We associate a $\mathbb{R}_+$-filtered simplicial set (Definition~\ref{dfn:filteredsimplicialset}) to every metric space, and we define the persistent homology of the metric space (Definition~\ref{dfn:persistenthomologymetricspace}) to be the persistent homology of the associated filtered simplicial set (Definition~\ref{dfn:persistenthomologyfilteredsimplicialset}). This is a simplicial set version of the usual Vietoris-Rips construction; see Remark~\ref{rmk:vietorisrips} for further discussion.

Let $Met$ denote the category with metric spaces $(X,d_X)$ as objects and with morphisms the maps $f : (X,d_X) \rightarrow (Y,d_Y)$ that are distance non-increasing, i.e.
\begin{equation} \label{eq:distancenonincreasing}
d_Y(f(a),f(b)) \leq d_X(a,b) \text{ for all } a,b \in X.
\end{equation}
Recall from Section~\ref{sec:filteredsimplicialsets} the category $\mathbb{R}_+$-sSet of $\mathbb{R}_+$-filtered simplicial sets.

We now describe a functor
\begin{align*}
Met &\rightarrow \mathbb{R}_+\text{-sSet}\\
(X, d_X) &\mapsto ({\bf X}, l_X)
\end{align*}
and define persistent homology of the metric space $(X, d_X)$ (Definition~\ref{dfn:persistenthomologymetricspace}) to be the persistent homology of $({\bf X}, l_X)$.

Let $(X,d_X) \in Met$. There is a simplicial set ${\bf X}$ with $n$-simplices
$${\bf X}_n = \{ (x_0,\ldots,x_n) \,\, | \,\, x_i \in X \text{ for all } 0 \leq i \leq n\}$$
consisting of all ordered $(n+1)$-tuples of points in $X$. For a morphism $\varphi : [m] \rightarrow [n]$ in $\ul{\Delta}$, the map ${\bf X}_{\varphi} : {\bf X}_n \rightarrow {\bf X}_m$ is defined on a $n$-simplex $\s = (x_0,\ldots,x_n) \in {\bf X}_n$ by
$${\bf X}_{\varphi}(x_0,\ldots,x_n) = (x_{\varphi(0)},\ldots,x_{\varphi(m)}).$$
Note that, for the maps ${\bf X}_{\varphi}$ to be well-defined, it is essential that the simplices $\s \in {\bf X}_n$ are taken to be \emph{ordered} tuples.

There is also a \emph{max-length} map
\begin{align}
l_X : {\bf X}_n &\rightarrow \mathbb{R}_+ \label{eq:maxlength}\\
(x_0,\ldots,x_n) &\mapsto \max \{ d_X(x_i,x_j) \,\, | \,\, 0 \leq i,j \leq n \} \nonumber
\end{align}
given by the maximum of the pairwise distances between points in a simplex. We observe $l_X({\bf X}_{\varphi}(\s)) \leq l_X(\s)$ holds since the points in ${\bf X}_{\varphi}(\s)$ are a subset of the points in $\s$. Hence, the pair $({\bf X},l_X)$ is a $\mathbb{R}_+$-filtered simplicial set. This defines the functor on objects $(X, d_X) \mapsto ({\bf X}, l_X)$. For a morphism $f : (X,d_X) \rightarrow (Y,d_Y)$ in $Met$ there is an obvious induced map on simplicial sets ${\bf f} : {\bf X} \rightarrow {\bf Y}$ that satisfies $l_Y({\bf f}(\s)) \leq l_X(\s)$ by the distance non-increasing property \eqref{eq:distancenonincreasing} of $f$. So, ${\bf f}$ is indeed a morphism in $\mathbb{R}_+$-sSet. This completes the definition of the functor $Met \rightarrow \mathbb{R}_+\text{-sSet}$.

We now define the persistent homology of a metric space $(X,d_X)$.

\begin{dfn} \label{dfn:persistenthomologymetricspace}
Given a metric space $(X,d_X)$, its {\bf persistent chain complex} $PC_*(X)$ is the persistent chain complex of the associated $\mathbb{R}_+$-filtered simplicial set $({\bf X}, l_X)$ (see Definition~\ref{dfn:persistenthomologyfilteredsimplicialset}), i.e.
$$PC_*(X) := PC_*({\bf X},l_X).$$

The {\bf persistent homology} of $(X,d_X)$ is the homology $PH_*(X) := PH_*({\bf X},l_X)$ of this chain complex.

For $t \in \mathbb{R}_+$, the {\bf time-$t$ persistent homology} $PH_*^{(t)}(X)$ is the homogeneous degree $t$ part of $PH_*(X)$.
\end{dfn}

\begin{rmk} \label{rmk:vietorisrips} {\bf (Vietoris-Rips)}
The classical definition of persistent homology of a metric space $(X,d_X)$ uses the Vietoris-Rips construction, which we now recall. For $t \in \mathbb{R}_+$, the time-$t$ Vietoris-Rips complex of $X$ is the abstract simplicial complex $V(X,t)$ with $n$-simplices given by all subsets $\{x_0,\ldots,x_n\} \subset X$ satisfying $d_X(x_i,x_j) \leq t$ for all $0 \leq i,j \leq n$.  For $s \leq t$ there is an inclusion $V(X,s) \hookrightarrow V(X,t)$ which induces a map on homology $H_*(V(X,s)) \rightarrow H_*(V(X,t))$. The homology of Vietoris-Rips at all times $t \in \mathbb{R}_+$ together with the induced maps of the inclusions for $s \leq t$ forms the persistent homology of $X$.

Letting $({\bf X}, l_X)$ denote the $\mathbb{R}_+$-filtered simplicial set associated to $(X,d_X)$, for all $t \in \mathbb{R}_+$ there is a simplicial set
$${\bf X}^{(t)} := \{ \text{simplices } \s \text{ in } {\bf X} \,\, | \,\, l_X(\s) \leq t\}.$$

There is a homotopy equivalence $V(X,t) \rightarrow {\bf X}^{(t)}.$  The homogeneous degree-$t$ persistent homology of $(X,d_X)$ is the homology of ${\bf X}^{(t)}.$ In conclusion, for all $t \in \mathbb{R}_+$ there are isomorphisms of $\k$-vector spaces
$$PH^{(t)}_*(X) \cong H_*({\bf X}^{(t)}) \cong H_*(V(X,t))$$
which are natural in $t$, i.e. the map $H_*(V(X,s)) \rightarrow H_*(V(X,t))$ for $s \leq t$ is identified with the map $PH^{(s)}_*(X) \rightarrow PH^{(t)}_*(X)$ given by multiplication by $T^{t-s} \in \k[\mathbb{R}_+]$.
\end{rmk}

\subsection{A long exact sequence for the sum metric $d_X + d_Y$} \label{sec:les}

Consider metric spaces $X$ and $Y$ with metrics $d_X$ and $d_Y$. Ultimately, we are interested in a K\"unneth formula for $PH_*(X \times Y)$ in terms of $PH_*(X)$ and $PH_*(Y)$, where the product space $X \times Y$ is equipped with the sum metric $d_{X \times Y} = d_X + d_Y$.

In this section, we describe a long exact sequence in Proposition~\ref{prp:les} that relates the persistent homology of the product metric space $(X \times Y,d_X + d_Y)$ and the persistent homology
\begin{equation} \label{eq:predictedmodule}
PH_*(X, Y) := PH_*({\bf X} \times {\bf Y}, l_X + l_Y)
\end{equation}
of the product filtered simplicial set ${\bf X} \times {\bf Y}$ with filtration function $l_X + l_Y$, where $l_X$ and $l_Y$ are the max-length maps on the simplicial sets ${\bf X}$ and ${\bf Y}$, respectively; see Section~\ref{sec:persistenthomologymetricspaces}. This is useful because the K\"unneth theorem for filtered simplicial sets (Theorem~\ref{thm:kunnethfilteredsimplicialsets}) provides the following formula for $PH_*(X, Y)$ in terms of $PH_*(X)$ and $PH_*(Y)$.

\begin{prp} \label{prp:kunnethdiagonal}
Given finite metric spaces $(X,d_X)$ and $(Y,d_Y)$, for $n \geq 0$ there is a short exact sequence
\begin{align} \label{eq:classicalkunnethses}
0 &\rightarrow \bigoplus_{i+j = n} PH_i(X) \otimes_{\k[\mathbb{R}_+]} PH_j(Y) \rightarrow PH_n(X, Y)\\
&\rightarrow \bigoplus_{i+j =n-1} Tor_1(PH_i(X),PH_j(Y)) \rightarrow 0 \nonumber
\end{align}
which is natural with respect to distance non-increasing maps $(X,d_X) \rightarrow (X',d_{X'})$ and $(Y,d_Y) \rightarrow (Y',d_{Y'})$. Moreover, the sequence splits, but not naturally.
\end{prp}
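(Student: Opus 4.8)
The plan is to reduce Proposition~\ref{prp:kunnethdiagonal} to the K\"unneth theorem for $\mathbb{R}_+$-filtered simplicial sets (Theorem~\ref{thm:kunnethfilteredsimplicialsets}), which was proved in the previous section. First I would observe that, by the very definition of $PH_*(X,Y)$ in \eqref{eq:predictedmodule}, the module in the middle position of the desired sequence is $PH_n({\bf X} \times {\bf Y}, l_X + l_Y)$, where $({\bf X},l_X)$ and $({\bf Y},l_Y)$ are the $\mathbb{R}_+$-filtered simplicial sets associated to $(X,d_X)$ and $(Y,d_Y)$ in Section~\ref{sec:persistenthomologymetricspaces}. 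Likewise, $PH_i(X) = PH_i({\bf X},l_X)$ and $PH_j(Y) = PH_j({\bf Y},l_Y)$ by Definition~\ref{dfn:persistenthomologymetricspace}. Since $X$ and $Y$ are finite, the $n$-simplex sets ${\bf X}_n = X^{n+1}$ and ${\bf Y}_n = Y^{n+1}$ are finite for all $n \geq 0$, so the hypotheses of Theorem~\ref{thm:kunnethfilteredsimplicialsets} are satisfied.

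Next I would check that the Cartesian product of the filtered simplicial sets associated to $X$ and $Y$ is exactly $({\bf X} \times {\bf Y}, l_X + l_Y)$ with the product simplicial set structure described just before the statement of Theorem~\ref{thm:kunnethfilteredsimplicialsets}. The simplicial set underlying the product filtered simplicial set has $n$-simplices ${\bf X}_n \times {\bf Y}_n = X^{n+1} \times Y^{n+1}$ with the diagonal structure maps, and its filtration function is $(l_X + l_Y)(\s,\t) = l_X(\s) + l_Y(\t)$ — this matches \eqref{eq:predictedmodule} verbatim. Then Theorem~\ref{thm:kunnethfilteredsimplicialsets} applied to $({\bf X},l_X)$ and $({\bf Y},l_Y)$ yields precisely the split short exact sequence \eqref{eq:classicalkunnethses}, with naturality in $({\bf X},l_X)$ and $({\bf Y},l_Y)$.

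The only point requiring a small argument is the naturality statement, which in Theorem~\ref{thm:kunnethfilteredsimplicialsets} is phrased with respect to morphisms of $\mathbb{R}_+$-filtered simplicial sets, whereas here it is phrased with respect to distance non-increasing maps of metric spaces. For this I would invoke the functoriality of the assignment $(X,d_X) \mapsto ({\bf X},l_X)$ established in Section~\ref{sec:persistenthomologymetricspaces}: a distance non-increasing map $f : (X,d_X) \to (X',d_{X'})$ induces a morphism ${\bf f} : ({\bf X},l_X) \to ({\bf X}',l_{X'})$ in $\mathbb{R}_+$-sSet, and similarly on the $Y$ side. Composing this functor with the natural transformations furnished by Theorem~\ref{thm:kunnethfilteredsimplicialsets} gives naturality of \eqref{eq:classicalkunnethses} with respect to the pair of distance non-increasing maps. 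I do not anticipate a genuine obstacle here; the entire proof is an unwinding of definitions plus citation of Theorem~\ref{thm:kunnethfilteredsimplicialsets}, and the mildest care is needed only to confirm that the product construction on filtered simplicial sets corresponds to the sum of filtration functions — which is immediate — and that the splitting's non-naturality is inherited directly from the non-naturality in Theorem~\ref{thm:kunnethfilteredsimplicialsets}.
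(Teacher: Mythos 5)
Your proposal matches the paper's proof: the paper simply observes that $PH_*(X) = PH_*({\bf X},l_X)$, $PH_*(Y) = PH_*({\bf Y},l_Y)$, and $PH_*(X,Y) = PH_*({\bf X}\times{\bf Y}, l_X+l_Y)$ by definition, and then cites Theorem~\ref{thm:kunnethfilteredsimplicialsets}. Your additional spelling-out of the finiteness hypothesis, the identification of the product filtered simplicial set, and the naturality via the functor $Met \to \mathbb{R}_+\text{-sSet}$ is exactly the routine unwinding the paper leaves implicit.
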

\begin{proof}
By definition of persistent homology of a metric space (Definition~\ref{dfn:persistenthomologymetricspace}) we have $PH_*(X) = PH_*({\bf X},l_X)$ and $PH_*(Y) = PH_*({\bf Y},l_Y)$. Hence the proposition follows immediately from Theorem~\ref{thm:kunnethfilteredsimplicialsets} and the definition \eqref{eq:predictedmodule}.
\end{proof}

We now describe the long exact sequence which relates $PH_*(X \times Y)$ to $PH_*(X,Y)$. By definition, we have the chain complexes
\begin{align} 
PC_n(X, Y) &:= PC_n({\bf X} \times {\bf Y}, l_X + l_Y) \label{eq:sumofmaxeschains}\\
&= \bigoplus_{(\s,\t) \in {\bf X}_n \times {\bf Y}_n} \G^{l_X(\s) + l_Y(\t)} \k[\mathbb{R}_+] \cdot \langle \s,\t \rangle \nonumber
\end{align}
and
\begin{equation} \label{eq:maxofsumschains}
PC_n(X \times Y) = \bigoplus_{(\s,\t) \in {\bf X}_n \times {\bf Y}_n} \G^{l_{X \times Y}(\s,\t)} \k[\mathbb{R}_+] \cdot \langle \s,\t \rangle.
\end{equation}
Given simplices $\s = (x_0,\ldots,x_n) \in {\bf X}_n$ and $\t = (y_0,\ldots,y_n) \in {\bf Y}_n$, we claim that the max-length $l_{X \times Y}(\s,\t)$ of the $n$-simplex $(\s,\t)$ in $X \times Y$ must be less than or equal to the sum $l_X(\s) + l_Y(\t)$. Indeed, for some $0 \leq i \leq j \leq n$, the max-length of $(\s,\t)$ is equal to $l_{X \times Y}(\s,\t) = d_{X \times Y}((x_i,y_i),(x_j,y_j)) = d_X(x_i,x_j) + d_Y(y_i,y_j)$. Then since $d_X(x_i,x_j) \leq l_X(\s)$ and $d_Y(y_i,y_j) \leq l_Y(\t)$, the claim follows:
\begin{equation} \label{eq:filtrationinequality}
l_{X \times Y}(\s,\t) \leq l_X(\s) + l_Y(\t) \text{ for all } (\s,\t) \in {\bf X}_n \times {\bf Y}_n.
\end{equation}
So, due to the degree shifts $\G$ in \eqref{eq:sumofmaxeschains} and \eqref{eq:maxofsumschains}, for $n \geq 0$ there is a $\mathbb{R}_+$-graded $\k[\mathbb{R}_+]$-linear embedding
\begin{align} \label{eq:comparisonmap}
\iota_n : PC_n(X, Y) &\hookrightarrow PC_n(X \times Y)\\
\langle \s,\t \rangle &\mapsto T^{l_X(\s) + l_Y(\t) - l_{X \times Y}(\s,\t)} \cdot \langle \s,\t \rangle. \nonumber
\end{align}
Together these form a chain map $\i : PC_*(X,Y) \hookrightarrow PC_*(X \times Y)$ and hence induce maps on homology
\begin{equation} \label{eq:inducedcomparisonmap}
{\iota_n}_* : PH_n(X, Y) \rightarrow PH_n(X \times Y).
\end{equation}
Consider the relative chain complex
\begin{align} \label{eq:relativechaincomplex}
\overline{PC}_n(X, Y) &:= PC_n(X \times Y)/\iota_n(PC_n(X, Y))\\
&= \bigoplus_{(\s,\t) \in {\bf X}_n \times {\bf Y}_n} \G^{l_{X \times Y}(\s,\t)} \k[\mathbb{R}_+] / (T^{l_X(\s) + l_Y(\t) - l_{X \times Y}(\s,\t)}) \cdot \langle \s,\t \rangle \nonumber
\end{align}
with homology $\overline{PH}_*(X, Y).$

Let $q_n : PC_n(X \times Y) \rightarrow \overline{PC}_n(X, Y)$ denote the quotient map implicit in the definition \eqref{eq:relativechaincomplex} and ${q_n}_*$ the induced map on homology. There is a connecting homomorphism $\delta_n : \overline{PH}_n(X, Y) \rightarrow PH_{n-1}(X, Y)$ defined in the usual way, which we now recall. Consider a chain $\alpha \in PC_n(X \times Y)$ that represents a cycle in $\overline{PC}_n(X, Y)$ and hence a homology class $[\alpha] \in \overline{PH}_n(X, Y).$ This means that $d(\alpha) \in \i_{n-1}(PC_{n-1}(X, Y)).$ Then define $\delta_n([\alpha]) := [\i_{n-1}^{-1}(d(\alpha))] \in PH_{n-1}(X, Y).$ It is routine to check that $\delta_n$ is independent of the choice of representative $\alpha$ of the class $[\alpha]$, and that it fits into the following long exact sequence. Moreover, this sequence is natural in the arguments $X$ and $Y$. The result is the following.

\begin{prp} \label{prp:les}
For metric spaces $(X,d_X)$ and $(Y,d_Y)$, there is a long exact sequence
$$
\cdots \xrightarrow{\delta_{n+1}} PH_n(X, Y) \xrightarrow{{\i_n}_*} PH_n(X \times Y) \xrightarrow{{q_n}_*} \overline{PH}_n(X, Y) \xrightarrow{\delta_n} \cdots
$$
which is natural with respect to distance non-increasing maps $(X,d_X) \rightarrow (X',d_{X'})$ and $(Y,d_Y) \rightarrow (Y',d_{Y'})$.
\hfill$\square$
\end{prp}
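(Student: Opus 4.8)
The plan is to recognize that Proposition~\ref{prp:les} is nothing more than the long exact sequence of a short exact sequence of chain complexes, specialized to the inclusion $\i : PC_*(X,Y) \hookrightarrow PC_*(X \times Y)$ and its cokernel $\overline{PC}_*(X,Y)$. So the first step is to assemble the short exact sequence of chain complexes
\begin{equation*}
0 \rightarrow PC_*(X,Y) \xrightarrow{\ \i\ } PC_*(X \times Y) \xrightarrow{\ q\ } \overline{PC}_*(X,Y) \rightarrow 0,
\end{equation*}
verifying that $\i$ is a chain map (the degree-shift factor $T^{l_X(\s)+l_Y(\t)-l_{X\times Y}(\s,\t)}$ in \eqref{eq:comparisonmap} is exactly what is needed to commute with the differentials, once one checks the exponents add up correctly on each face; this is a routine bookkeeping check using \eqref{eq:filtrationinequality} applied to $\s$, $\t$, and their faces $d_i\s$, $d_i\t$), that $q$ is the quotient chain map by construction \eqref{eq:relativechaincomplex}, and that the sequence is exact in each level $n$ — which is immediate since $\i_n$ is injective by the definition \eqref{eq:comparisonmap} and $\overline{PC}_n(X,Y)$ is defined to be the cokernel.

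The second step is to invoke the standard zig-zag (snake) lemma for a short exact sequence of chain complexes, which produces the connecting homomorphisms $\delta_n$ and the long exact sequence in homology. One should note that this is entirely an argument in the abelian category of $\mathbb{R}_+$-graded $\k[\mathbb{R}_+]$-modules: $\i$ and $q$ are $\mathbb{R}_+$-graded module maps, so all the maps in the resulting long exact sequence are graded module homomorphisms, and the sequence is a sequence of graded $\k[\mathbb{R}_+]$-modules. It is worth confirming that the connecting map $\delta_n$ produced by the snake lemma agrees with the explicit formula $\delta_n([\alpha]) = [\i_{n-1}^{-1}(d\alpha)]$ stated before the proposition — but this is just the usual construction of the connecting map, since $\i_{n-1}$ is injective so $\i_{n-1}^{-1}(d\alpha)$ is well-defined whenever $q_{n-1}(d\alpha)=0$, i.e. whenever $\alpha$ represents a cycle in $\overline{PC}_n(X,Y)$.

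The third step is naturality. Given distance non-increasing maps $(X,d_X) \rightarrow (X',d_{X'})$ and $(Y,d_Y) \rightarrow (Y',d_{Y'})$, one gets induced maps of $\mathbb{R}_+$-filtered simplicial sets, hence induced chain maps on $PC_*(X,Y) \rightarrow PC_*(X',Y')$ and $PC_*(X\times Y) \rightarrow PC_*(X'\times Y')$, and one checks these are compatible with $\i$ and $q$ — compatibility with $\i$ requires that the induced maps intertwine the degree-shift factors, which follows because max-length and the sum metric are both monotone under distance non-increasing maps, so all the relevant exponent inequalities go the right way. This gives a morphism of short exact sequences of chain complexes, and functoriality of the snake lemma then yields the naturality of the long exact sequence.

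The only mildly delicate point — and the place I would spend the most care — is verifying that $\i$ really is a chain map, i.e. that $d \circ \i_n = \i_{n-1} \circ d$ on generators. Writing both sides out on a generator $\langle \s, \t\rangle$, the left side sends it to $\sum_i (-1)^i T^{l_X(\s)+l_Y(\t)-l_{X\times Y}(d_i\s,d_i\t)}\langle d_i\s, d_i\t\rangle$ and the right side to $\sum_i (-1)^i T^{l_X(\s)+l_Y(\t) - l_X(d_i\s) - l_Y(d_i\t) + l_X(d_i\s)+l_Y(d_i\t) - l_{X\times Y}(d_i\s,d_i\t)}\langle d_i\s,d_i\t\rangle$; these agree term by term, so the check is really just confirming the exponents in \eqref{eq:comparisonmap} and in the face maps on $F(-)$ combine as claimed. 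Everything else is the standard homological algebra of the snake lemma, which applies verbatim in the abelian category $\k[\mathbb{R}_+]\text{-Mod}$.
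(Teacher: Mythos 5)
Your proposal is correct and matches the paper's approach exactly: the paper constructs the inclusion $\i$, the quotient complex $\overline{PC}_*(X,Y)$, and the connecting map $\delta_n$, then invokes the standard long exact sequence of a short exact sequence of chain complexes (with naturality coming from functoriality of the construction). Your explicit verification that $\i$ is a chain map via the exponent bookkeeping is a detail the paper leaves implicit, but it is the same routine check the paper is alluding to.
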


\subsection{The K\"unneth Theorem for $d_X + d_Y$ in low dimensions} \label{sec:kunnethinlowdims}

The purpose of this section is to prove our main Theorem~\ref{thm:kunnethmain}, which is a refined version of Theorem~\ref{thm:intro-maintheorem} from the introduction.

\begin{thm} \label{thm:kunnethmain} Consider metric spaces $(X,d_X)$ and $(Y,d_Y)$ and the product space equipped with the sum metric $(X \times Y, d_X + d_Y)$.

Then, the map ${\i_n}_* : PH_n(X,Y) \rightarrow PH_n(X \times Y)$ (see \eqref{eq:inducedcomparisonmap}) is an isomorphism for $n = 0,1,$ and is a surjection for $n = 2$.

In particular, by Proposition~\ref{prp:kunnethdiagonal}, if $X$ and $Y$ are finite sets, then for $n = 0,1$ there is a short exact sequence
\begin{align} \label{eq:persistentkunnethdim01ses}
0 &\rightarrow \bigoplus_{i+j = n} PH_i(X) \otimes_{\k[\mathbb{R}_+]} PH_j(Y) \rightarrow PH_n(X \times Y)\\
&\rightarrow \bigoplus_{i+j =n-1} Tor_1(PH_i(X),PH_j(Y)) \rightarrow 0 \nonumber
\end{align}
which is natural with respect to distance non-increasing maps $(X,d_X) \rightarrow (X',d_{X'})$ and $(Y,d_Y) \rightarrow (Y',d_{Y'})$. Moreover, this sequence splits, but not naturally.
\hfill$\square$
\end{thm}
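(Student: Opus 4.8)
The plan is to deduce the theorem from the long exact sequence of Proposition~\ref{prp:les} by showing that the relative term $\overline{PH}_n(X,Y)$ vanishes for $n=0,1,2$. Indeed, once we know $\overline{PH}_0(X,Y) = \overline{PH}_1(X,Y) = \overline{PH}_2(X,Y) = 0$, the exact sequence
\begin{equation*}
\overline{PH}_{n+1}(X,Y) \xrightarrow{\delta_{n+1}} PH_n(X,Y) \xrightarrow{{\i_n}_*} PH_n(X \times Y) \xrightarrow{{q_n}_*} \overline{PH}_n(X,Y)
\end{equation*}
immediately gives that ${\i_n}_*$ is an isomorphism for $n=0,1$ (both neighboring relative terms vanish) and a surjection for $n=2$ (the relative term to the right vanishes). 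The second half of the statement then follows by combining this with Proposition~\ref{prp:kunnethdiagonal}: for finite $X,Y$ the module $PH_n(X,Y)$ sits in the split short exact sequence \eqref{eq:classicalkunnethses}, and transporting that sequence along the isomorphism ${\i_n}_*$ for $n=0,1$ yields \eqref{eq:persistentkunnethdim01ses}, with naturality and the (non-natural) splitting inherited from Proposition~\ref{prp:kunnethdiagonal} and the naturality of ${\i_n}_*$.

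So the real content is the vanishing of $\overline{PH}_n(X,Y)$ for $n \le 2$, and I would carry this out directly from the explicit description \eqref{eq:relativechaincomplex} of the relative chain complex,
\begin{equation*}
\overline{PC}_n(X,Y) = \bigoplus_{(\s,\t) \in {\bf X}_n \times {\bf Y}_n} \G^{l_{X \times Y}(\s,\t)} \k[\mathbb{R}_+] / (T^{l_X(\s) + l_Y(\t) - l_{X \times Y}(\s,\t)}) \cdot \langle \s,\t \rangle,
\end{equation*}
together with the face-map formula $d_i\langle \s,\t\rangle = T^{l_{X\times Y}(\s,\t) - l_{X \times Y}(d_i\s,d_i\t)}\langle d_i\s, d_i\t\rangle$ inherited from $\k[\mathbb{R}_+]$-sMod. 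The first step is to analyze when the summand attached to $(\s,\t)$ is nonzero: the exponent $e(\s,\t) := l_X(\s)+l_Y(\t) - l_{X\times Y}(\s,\t)$ is $0$ precisely when the pair of coordinate indices realizing the max in $X\times Y$ simultaneously realizes the max in $X$ and in $Y$. In low simplicial degree this is highly constrained. For $n=0$, every $0$-simplex $(x,y)$ has $l_X = l_Y = l_{X\times Y} = 0$, so $\overline{PC}_0 = 0$ and $\overline{PH}_0 = 0$ trivially. For $n=1$, a $1$-simplex is a pair of points in each factor, so there is only one pair of indices $\{0,1\}$ and it realizes all three maxima; hence $e(\s,\t)=0$ for every $1$-simplex, $\overline{PC}_1 = 0$, and $\overline{PH}_1 = 0$. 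The genuinely nontrivial case is $n=2$: here $\overline{PC}_2$ can be nonzero, but I would show that the boundary map $\overline{PC}_3 \to \overline{PC}_2$ is surjective onto $\overline{Z}_2$ (equivalently, $H_2$ of the complex vanishes), by exhibiting for each generator of $\overline{PC}_2$ an explicit preimage built from degenerate $3$-simplices. The key observation is that degeneracies $s_i\s$ in the simplicial set do not increase $l$ (they repeat a coordinate), so $l_{X\times Y}(s_i(\s,\t)) = l_{X\times Y}(\s,\t)$, which means the degeneracy operators act on the relevant summands without introducing positive powers of $T$; pushing a $2$-simplex into degenerate $3$-simplices and taking the alternating boundary recovers it up to terms that are themselves boundaries or that vanish in the quotient. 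Concretely I expect to use the standard simplicial identity that $\sum(-1)^i d_i s_j$ telescopes to the identity, adapted to track the $T$-exponents, to write any relative $2$-cycle as a boundary.

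The main obstacle is exactly this $n=2$ vanishing: one has to control the quotients by $(T^{e(\s,\t)})$ carefully, because a chain-level identity that holds in $PC_*$ need only descend to $\overline{PC}_*$ if all the intermediate exponent-shifts are compatible with the relations being quotiented out. I would organize this by first establishing a normal form for generators of $\overline{PC}_2(X,Y)$ — reducing to $2$-simplices $(\s,\t)$ with $e(\s,\t) > 0$ and with $\s,\t$ non-degenerate — and then handling the possible patterns of which index pair realizes each maximum (there are only a few combinatorial types for a $2$-simplex, i.e.\ triples of points in each factor). For each type I would write down an explicit degenerate $3$-chain whose boundary in $\overline{PC}_2$ is the given generator, checking the $T$-exponent bookkeeping case by case; the triangle inequality is \emph{not} needed (consistent with the remark in the introduction), only symmetry of $d_X,d_Y$. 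I would double-check that this argument does not extend to $n \ge 3$ — and indeed it should not, since the Hamming cube computation in Example~\ref{ex:intro-hamming} shows $\overline{PH}_n(X,Y) \ne 0$ in general for $n \ge 2$ once the product has more than two genuinely interacting factors — so the case analysis must genuinely use that a $2$-simplex involves only three points per factor. Finally, naturality of the whole package is automatic: $\i$ is natural at the chain level by construction, Proposition~\ref{prp:les} is natural, and Proposition~\ref{prp:kunnethdiagonal} is natural, so \eqref{eq:persistentkunnethdim01ses} inherits naturality with respect to distance non-increasing maps in each variable.
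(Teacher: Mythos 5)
Your high-level plan coincides with the paper's: you invoke Proposition~\ref{prp:les} and reduce to showing $\overline{PH}_n(X,Y)=0$ for $n=0,1,2$, and your arguments for $n=0,1$ (the exponent $e(\s,\t)=l_X(\s)+l_Y(\t)-l_{X\times Y}(\s,\t)$ is identically zero on $0$- and $1$-simplices, so $\overline{PC}_0=\overline{PC}_1=0$) are exactly right, as is the passage from the vanishing to the short exact sequence via Proposition~\ref{prp:kunnethdiagonal}.

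The gap is in your mechanism for the $n=2$ step. You propose to hit each generator $\langle\s,\t\rangle$ of $\overline{PC}_2$ with the boundary of a degenerate $3$-simplex of the product, citing the ``standard simplicial identity that $\sum(-1)^i d_i s_j$ telescopes to the identity.'' That identity is false: the simplicial identities give $d_j s_j=\mathrm{id}$ and $d_{j+1}s_j=\mathrm{id}$, and these two terms appear with opposite signs, so they cancel rather than add. The alternating boundary of $s_j(\s,\t)$ therefore has no $\pm(\s,\t)$ term at all — degenerate simplices of the product form a subcomplex whose boundaries never return the original nondegenerate simplex. Consequently no chain built purely from product degeneracies $s_j(\s,\t)$ can have boundary $\langle\s,\t\rangle + r$ with $r\in\i_2(PC_2(X,Y))$.

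What actually works, and what the paper's Lemma~\ref{lem:surjective} does, is to use a $3$-simplex $(\s',\t')$ that is degenerate in \emph{one} factor only — e.g.\ $\s'=(x_0,x_0,x_1,x_2)=s_0\s$ — while $\t'$ is \emph{not} $s_0\t$ but instead prepends a carefully chosen $y_j$ (e.g.\ $\t'=(y_2,y_0,y_1,y_2)$, or $(y_1,y_0,y_1,y_2)$, depending on which of two inequalities among the edge lengths holds). Such a $(\s',\t')$ is nondegenerate as a product simplex, satisfies $l_{X\times Y}(\s',\t')=l_{X\times Y}(\s,\t)$, has $d_0(\s',\t')=(\s,\t)$ exactly, and has the remaining faces $f_1,f_2,f_3$ falling into earlier cases of the analysis (those with a repeated $x$-coordinate), which one shows lie in $\i_2(PC_2(X,Y))$ or are handled by induction on the case type. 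Your instinct to stratify by coincidence patterns among $x_0,x_1,x_2$ and $y_0,y_1,y_2$ and to track the $T$-exponents is correct, and your observation that the argument cannot extend to $n\ge 3$ is also on point; but the cycle-killing construction must go through these one-sided degeneracies with a case-dependent choice of the prepended vertex, not through product degeneracies and a telescoping identity.
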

\begin{proof}
In Lemma~\ref{lem:relative} below we prove that $\overline{PH}_n(X,Y) = 0$ for $n = 0,1,2$. Hence it follows from the long exact sequence in Proposition~\ref{prp:les} that ${\iota_n}_*$ is an isomorphism for $n = 0,1,$ and a surjection for $n = 2$, as claimed. The short exact sequence for $n = 0,1$ follows by replacing $PH_n(X,Y)$ with $PH_n(X \times Y)$ in the short exact sequence \eqref{eq:classicalkunnethses} from Proposition~\ref{prp:kunnethdiagonal}.
\end{proof}

The following lemma is used in the proof of Theorem~\ref{thm:kunnethmain} above.

\begin{lem} \label{lem:relative}
For $n = 0,1,2,$ the homology of the chain complex \eqref{eq:relativechaincomplex} vanishes $\overline{PH}_n(X, Y) = 0.$

In particular, the map ${\iota_n}_* : PH_n(X, Y) \rightarrow PH_n(X \times Y)$ (see \eqref{eq:inducedcomparisonmap}) is an isomorphism for $n = 0,1,$ and is a surjection for $n = 2$.
\end{lem}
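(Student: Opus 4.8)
The plan is to show that the relative complex $\overline{PC}_*(X,Y)$ from \eqref{eq:relativechaincomplex} is acyclic in degrees $0,1,2$ by identifying it with an explicit complex of bars and then chasing cycles. Write $e(\s,\t) := l_X(\s) + l_Y(\t) - l_{X\times Y}(\s,\t) \geq 0$ for the ``defect'' exponent, so that the generator $\langle \s,\t\rangle$ of $\overline{PC}_n(X,Y)$ spans a copy of the bar $(l_{X\times Y}(\s,\t),\, l_X(\s)+l_Y(\t))$, which is zero precisely when $e(\s,\t)=0$, i.e.\ when the max in $l_{X\times Y}$ is achieved ``diagonally'' (same pair of indices) in both factors. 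In particular every $0$-simplex $(x,y)$ has $e=0$, so $\overline{PC}_0(X,Y)=0$ and $\overline{PH}_0=0$ for free; the content is in degrees $1$ and $2$.

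First I would set up the bookkeeping: for an $n$-simplex $(\s,\t)$ with $\s=(x_0,\dots,x_n)$, $\t=(y_0,\dots,y_n)$, record the index pairs $\{i,j\}$ realizing $l_X(\s)$, those realizing $l_Y(\t)$, and those realizing $l_{X\times Y}(\s,\t)$; the defect $e(\s,\t)$ vanishes iff some single pair $\{i,j\}$ simultaneously realizes $l_X$ and $l_Y$ (hence also $l_{X\times Y}$). Next I would analyze the differential on $\overline{PC}_*$: it is the alternating sum of the face maps $d_k\langle\s,\t\rangle = T^{\,l_X(\s)+l_Y(\t)-l_X(d_k\s)-l_Y(d_k\t)}\langle d_k\s, d_k\t\rangle$, reduced modulo $T^{e(\s,\t)}$ in the source and $T^{e(d_k\s,d_k\t)}$ in the target; note dropping a vertex can only decrease $l_X$ and $l_Y$, so the exponents here are $\geq 0$ and the face maps between bars are the standard ``shift-and-truncate'' maps. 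The degeneracies act by zero on the associated chain complex in the relevant range after passing to the normalized complex (every degenerate $(\s,\t)$ repeats a vertex, and one checks $e$ and the differential are compatible with normalization), so I can work with the normalized complex, which in degree $n$ is supported on tuples with distinct consecutive structure.

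Then I would do the degree-$1$ and degree-$2$ cycle chases by hand. In degree $1$: a nondegenerate $1$-simplex is a pair $(x_0,x_1;y_0,y_1)$, whose only nonzero-defect case is $d_X(x_0,x_1)\neq 0 \neq d_Y(y_0,y_1)$ but with $l_X$ and $l_Y$ realized — trivially, since there is only one pair of indices — so actually $e=0$ always in degree $1$, giving $\overline{PC}_1(X,Y)=0$ and $\overline{PH}_1=0$ immediately. This is a clean win. The real work is degree $2$: here $\overline{PC}_2(X,Y)$ is genuinely nonzero (a triangle $(\s,\t)$ on vertices $\{0,1,2\}$ can have $l_X$ realized on $\{0,1\}$ and $l_Y$ realized on $\{1,2\}$, say, giving $e>0$), while $\overline{PC}_1=0$. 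So acyclicity in degree $2$ reduces to showing the differential $\overline{PC}_2(X,Y)\to \overline{PC}_1(X,Y)=0$ is injective, i.e.\ $\overline{PC}_2(X,Y)=0$ as well?\ — no: the differential lands in the zero group, so $Z_2 = \overline{PC}_2$, and we instead need every $2$-chain to be a boundary from $\overline{PC}_3(X,Y)$. Thus the crux is: given a $2$-cycle (automatically all of $\overline{PC}_2$, since $d=0$ into the zero group $\overline{PC}_1$), exhibit an explicit $3$-chain mapping onto it. I would build the filling one generator at a time: for a triangle $(\s,\t)$ with $e(\s,\t)>0$, cone it off by adding a suitable fourth vertex (e.g.\ repeat one of the existing vertices in the factor that does \emph{not} realize the max, so as to control $l_X, l_Y$ of the new faces) and check that the alternating sum of the four faces of the resulting tetrahedron equals $(\s,\t)$ modulo lower-defect terms, then induct on a well-chosen complexity measure (say, $\sum e(\s,\t)$ over the support, or the number of generators with $e>0$).

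The main obstacle I anticipate is precisely this explicit filling in degree $2$: one must choose the added vertex so that \emph{all three new faces} have the right (small or zero) defect and so that the truncation exponents $T^{e(\cdot)}$ match up across the differential without introducing new high-defect generators, and then show the induction terminates. A cleaner route to the same end — which I would try in parallel — is to filter $\overline{PC}_*(X,Y)$ by the defect exponent and run the associated spectral sequence, or better, to recognize $\overline{PC}_*(X,Y)$ as the (shifted, truncated) mapping-cone-type complex measuring the failure of $l_{X\times Y}=l_X+l_Y$ and bound its homology via a chain homotopy built from a simplicial ``diagonal-to-max'' deformation; either way, the field coefficients $\k$ and the fact (Lemma~\ref{lem:finitelygeneratedfree}) that everything in sight is graded free and finitely generated are what make the bar-level computations tractable. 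I expect that, once the degree-$2$ filling is nailed down, the vanishing in degrees $0,1$ falls out as noted above with essentially no work.
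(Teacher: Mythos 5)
Your handling of degrees $0$ and $1$ matches the paper's proof exactly: every $0$- or $1$-simplex $(\s,\t)$ has $l_X(\s)+l_Y(\t)=l_{X\times Y}(\s,\t)$ (in degree $1$ because there is only one pair of vertices), so $\overline{PC}_n(X,Y)=0$ for $n=0,1$ outright. You also correctly reduce the degree-$2$ claim to surjectivity of $\overline{d}\colon \overline{PC}_3(X,Y)\to\overline{PC}_2(X,Y)$, since $\overline{PC}_1(X,Y)=0$ forces every $2$-chain to be a cycle.

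The gap is in the degree-$2$ filling, which is the actual content of the lemma and which you explicitly flag as ``the main obstacle.'' Your sketch — ``cone it off by adding a suitable fourth vertex (e.g.\ repeat one of the existing vertices in the factor that does not realize the max)'' — is not quite what works and is left entirely unverified. The paper's Lemma~\ref{lem:surjective} does something more specific: it performs a case analysis (up to permutation and swapping $X,Y$) on how many of the $x_i$ coincide, shows the degenerate cases (i)--(iii) already lie in, or reduce after one cone-off to, $\iota_2(PC_2(X,Y))$, and in the generic case (iv) inserts a genuinely new \emph{cross-diagonal} vertex $(x_0,y_1)$ (or $(x_1,y_0)$) — not a repeat of an existing vertex of $(\s,\t)$ — at the front, giving $\s'=(x_0,x_0,x_1,x_2)$, $\t'=(y_1,y_0,y_1,y_2)$. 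The choice between $(x_0,y_1)$ and $(x_1,y_0)$ is forced by an explicit edge-length inequality ensuring $l_{X\times Y}(\s',\t')=l_{X\times Y}(\s,\t)$, and the three new faces are then checked to be of types (i)--(iii). None of this verification — that the new $3$-simplex does not inflate $l_{X\times Y}$, that the new faces have the correct reduced types, and that the recursion terminates — appears in your proposal, and a literal reading of ``repeat one of the existing vertices'' would produce a degenerate $3$-simplex whose alternating boundary cancels. You identified the right reduction and the right strategy in outline, but the construction that makes it work is absent.
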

\begin{proof}
In dimensions $n = 0$ and $n = 1$, we claim that every simplex $(\s,\t) \in {\bf X}_n \times {\bf Y}_n$ has the property $l_X(\s) + l_Y(\t) = l_{X \times Y}(\s,\t)$. In dimension $0$ this is because the max-length of any $0$-simplex is zero. In dimension $1$ this is because the max-length is the length of the unique edge in the simplex. Hence, by the definition \eqref{eq:relativechaincomplex} of the relative chain complex, we have $\overline{PC}_n(X, Y) = 0$ and hence $\overline{PH}_n(X, Y) = 0$ for $n = 0,1$.

We now consider dimension $n = 2$. Since $\overline{PC}_1(X,Y) = 0,$ we have
$$\overline{PH}_2(X, Y) = \overline{PC}_2(X, Y) \, / \, im(\overline{d} : \overline{PC}_3(X, Y) \rightarrow \overline{PC}_2(X, Y)).$$
So, we must show that the differential
$$\overline{d} : \overline{PC}_3(X, Y) \rightarrow \overline{PC}_2(X, Y)$$
is surjective. Let $(\s,\t) \in {\bf X}_2 \times {\bf Y}_2$ and consider the corresponding generator $[\langle \s,\t \rangle] \in \overline{PC}_2(X, Y)$. To show that $[\langle \s,\t \rangle]$ is in the image of $\overline{d}$ we must find some $\alpha \in PC_3(X \times Y)$ such that $d(\alpha) = \langle \s,\t \rangle + r$ for some $r \in \i_2(PC_2(X, Y)).$ This is possible by Lemma~\ref{lem:surjective} below. Hence $\overline{d}$ is surjective and so $\overline{PH}_2(X,Y) = 0$.

The final statement of the lemma about ${\i_n}_*$ for $n = 0,1,2$ is an immediate consequence of exactness of the long exact sequence in Proposition~\ref{prp:les} and the vanishing $\overline{PH}_n(X, Y) = 0 \text{ for } n = 0,1,2.$
\end{proof}

The following lemma is used in the proof of Lemma~\ref{lem:relative} above.

\begin{lem} \label{lem:surjective}
Given a $2$-simplex $(\s,\t) \in {\bf X}_2 \times {\bf Y}_2$, there exists an element $\alpha \in PC_3(X \times Y)$ such that $d(\alpha) = \langle \s,\t \rangle + r$ for some $r \in \i_2(PC_2(X, Y)).$
\end{lem}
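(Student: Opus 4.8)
The plan is to write down an explicit element $\alpha \in PC_3(X \times Y)$, namely a degenerate-simplex-like combination built from $(\s,\t)$, whose boundary recovers $\langle \s,\t\rangle$ up to a correction term living in $\i_2(PC_2(X,Y))$. The key observation is that $\i_2(PC_2(X,Y))$ consists precisely of the $\k[\mathbb{R}_+]$-span of elements $T^{l_X(\a)+l_Y(\b)-l_{X\times Y}(\a,\b)}\langle\a,\b\rangle$; so a $2$-chain lies in the image of $\i_2$ exactly when, generator by generator, its coefficient is divisible by that power of $T$ — i.e. when the ``extra slack'' $l_X+l_Y - l_{X\times Y}$ has already been paid for. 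Thus I want $\alpha$ so that $d(\alpha)$ equals $\langle\s,\t\rangle$ plus terms that are either genuinely $0$ in $\overline{PC}_2$ or that carry enough powers of $T$.

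**Construction of $\alpha$.** Write $\s = (x_0,x_1,x_2)$ and $\t = (y_0,y_1,y_2)$. The natural candidate is a cone-type construction: insert a repeated vertex. For instance, take
\[
\alpha \;=\; T^{c}\,\big\langle (x_0,x_0,x_1,x_2),\,(y_0,y_0,y_1,y_2)\big\rangle
\]
for a suitable exponent $c \ge 0$, or more generally an alternating combination of such ``partially degenerate'' $3$-simplices (one for each way of doubling a vertex) chosen so that most boundary faces cancel in pairs and the surviving one is $\langle\s,\t\rangle$. Since $(x_0,x_0,x_1,x_2)$ has the same set of underlying points as $(x_0,x_1,x_2)$, its max-length $l_X$ is unchanged, and likewise for $Y$; so the filtration value of this $3$-simplex in $X\times Y$ is still $\le l_X(\s)+l_Y(\t)$. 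I would compute $d\alpha$ face by face: the faces $d_0$ and $d_1$ of $(x_0,x_0,x_1,x_2)$ both equal $(x_0,x_1,x_2)=\s$ up to the $Y$-factor behaving in parallel, so with the right sign they either give $\pm\langle\s,\t\rangle$ or cancel against a twin face; the remaining faces $d_2,d_3$ are generators $\langle\a,\b\rangle$ whose point-sets are proper subsets, and I must check these land in $\i_2(PC_2(X,Y))$.

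**The main obstacle.** The crux is bookkeeping the powers of $T$. Each face map in $PC_*$ carries a factor $T^{l_{X\times Y}(\cdot)-l_{X\times Y}(d_i(\cdot))}$, and I must verify that for every ``leftover'' face $\langle\a,\b\rangle$ appearing in $d\alpha$, the accumulated exponent is at least $l_X(\a)+l_Y(\b)-l_{X\times Y}(\a,\b)$ — i.e. that whatever slack the product face has is already covered. Equivalently: the slack can only \emph{decrease} when passing to a face (taking fewer points can only bring $l_X+l_Y$ and $l_{X\times Y}$ closer or keep the relation), and $\alpha$'s exponent $c$ should be chosen to be exactly the slack of the ambient $3$-simplex, which dominates the slack of each of its faces. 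I expect this monotonicity-of-slack-under-faces to be the real content, and it should follow from the definitions \eqref{eq:maxlength}, \eqref{eq:filtrationinequality} together with the fact that restricting to a sub-tuple of points can only remove pairwise distances from the max. Once that inequality is in hand, the identity $d(\alpha)=\langle\s,\t\rangle+r$ with $r\in\i_2(PC_2(X,Y))$ follows by the explicit face computation and the remark that the self-cancelling faces of the degenerate $3$-simplices contribute nothing in the quotient.
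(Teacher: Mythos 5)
There is a genuine gap, and it is fatal to your specific construction. Your candidate
\[
\alpha = T^c\,\big\langle (x_0,x_0,x_1,x_2),\,(y_0,y_0,y_1,y_2)\big\rangle
\]
is the image under the degeneracy $s_0$ of $(\s,\t)$ in the \emph{product} simplicial set ${\bf X} \times {\bf Y}$, and for any $s_0$-image the faces $d_0$ and $d_1$ coincide: $d_0 s_0 = d_1 s_0 = \mathrm{id}$. In the alternating face maps complex these two faces enter with opposite signs and with the \emph{same} power of $T$ (since $l_{X\times Y}(d_0(\s',\t')) = l_{X\times Y}(d_1(\s',\t'))$), so they cancel exactly. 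Consequently $\langle\s,\t\rangle$ does \emph{not} appear in $d(\alpha)$ at all. The same problem afflicts your proposed generalization: any $3$-simplex of the form $(s_j\s, s_j\t)$, i.e.\ with the same vertex doubled on both factors, has $d_j = d_{j+1}$, and so $\langle\s,\t\rangle$ always drops out. No $\k[\R_+]$-linear combination of such diagonally degenerate $3$-simplices can ever produce $\langle\s,\t\rangle$ in its boundary.

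The missing idea, which is the heart of the paper's argument, is to break the diagonal symmetry: on one factor you take $s_0\s = (x_0,x_0,x_1,x_2)$, but on the other factor you prepend a \emph{different} vertex, e.g.\ $\t' = (y_2,y_0,y_1,y_2)$ or $(y_1,y_0,y_1,y_2)$. Then $d_0(\s',\t') = (\s,\t)$ while $d_1(\s',\t') \ne (\s,\t)$, so the main term survives. Which vertex to prepend matters: it must be chosen (via a case analysis on which $x_i$ or $y_i$ coincide, and in the generic case on which of two triangle-type inequalities holds) so that $l_{X\times Y}(\s',\t') = l_{X\times Y}(\s,\t)$, because otherwise $d_0$ contributes $T^{\text{positive}}\langle\s,\t\rangle$ rather than $\langle\s,\t\rangle$ itself. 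This also forces $c = 0$ in your expression; choosing $c$ to be the slack of $(\s',\t')$ overshoots and only shows $T^c\langle\s,\t\rangle + r$ is in the image. Finally, your claim that ``the slack can only decrease when passing to a face'' is false in general (it can increase); the inequality your bookkeeping actually requires is only $l_X(\s')+l_Y(\t') \geq l_X(d_i\s') + l_Y(d_i\t')$, which is trivially true. But even with that correction, the generic faces $f_1,f_2,f_3$ are not automatically in $\i_2(PC_2(X,Y))$ and require a further recursive correction, which your outline does not provide.
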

\begin{proof}
Say $\s = (x_0, x_1, x_2)$ and $\t = (y_0, y_1, y_2)$. Up to permutations of the indices and switching the roles of $x$ and $y$, there are four possible cases:

\begin{enumerate}
\item $x_0 = x_1 = x_2$,\\
\item $x_0 = x_1 \neq x_2$ and $y_1 = y_2$,\\
\item $x_0 = x_1 \neq x_2$ and $y_1 \neq y_2$,\\
\item the $x_i$ are all distinct.\\
\end{enumerate}

Notice that if $\langle \s, \t \rangle \in \i_2(PC_2(X, Y))$, then $\alpha := 0$ and $r := -\langle \s, \t \rangle$ satisfies the required conditions. We claim that Case~$(i)$ and Case~$(ii)$ are of this form.

Indeed, in Case~$(i)$ we have $l_X(\s) = 0$, and moreover $l_X(\s) + l_Y(\t) = l_Y(\t) = l_{X \times Y}(\s,\t)$. Hence by definition \eqref{eq:comparisonmap} of $\i_2$, we have $\langle \s, \t \rangle = \iota_2(\langle \s, \t \rangle) \in \i_2(PC_2(X, Y))$, as claimed.

Similarly, in Case~$(ii)$, observe that $l_{X \times Y}(\s,\t) = d_{X \times Y}((x_0,y_0),(x_2,y_2)) = d_X(x_0,x_2) + d_Y(y_0,y_2) = l_X(\s) + l_Y(\t)$, and so $\langle \s, \t \rangle = \iota_2(\langle \s, \t \rangle) \in \i_2(PC_2(X, Y))$.

We now consider Case~$(iii)$. Consider the $3$-simplices
\begin{align*}
\s' := (x_0, x_0, x_1, x_2) \in {\bf X}_3,\\
\t' := (y_2, y_0, y_1, y_2) \in {\bf Y}_3.
\end{align*}
We claim that $\alpha := \langle \s', \t' \rangle \in PC_3(X \times Y)$ satisfies the required conditions. The faces of the $3$-simplex $(\s',\t') \in {\bf X}_3 \times {\bf Y}_3$ are
\begin{align*}
f_0 := d_0(\s',\t') &= (\s,\t),\\
f_1 := d_1(\s',\t') &= \{(x_0,y_2),(x_1,y_1),(x_2,y_2)\},\\
f_2 := d_2(\s',\t') &= \{(x_0,y_2),(x_0,y_0),(x_2,y_2)\},\\
f_3 := d_3(\s',\t') &= \{(x_0,y_2),(x_0,y_0),(x_1,y_1)\}.
\end{align*}
From the assumption $x_0 = x_1$, it follows that $l_{X \times Y}(\s,\t) = l_{X \times Y}(\s',\t')$. Hence by definition of the differential $d$ on $PC_*(X \times Y)$ we have
\begin{equation} \label{eq:caseiii}
d(\langle \s', \t' \rangle) = \langle \s, \t \rangle + \sum_{i=1}^3 (-1)^i \cdot T^{l_{X \times Y}(\s,\t) - l_{X \times Y}(f_i)} \cdot \langle f_i \rangle.
\end{equation}
Observe that, since $x_0 = x_1$, up to a permutation of the vertices Case~$(ii)$ applies to the simplices $f_1$ and $f_2$ and Case~$(i)$ applies to the simplex $f_3$. Hence for $i = 1,2,3,$ we have shown above that $\langle f_i \rangle = \i_2(\langle f_i \rangle).$ Hence the summation term in \eqref{eq:caseiii} is an element of $i_2(PC_2(X, Y))$, i.e. $d(\langle \s', \t' \rangle) = \langle \s, \t \rangle + r$ where $r \in i_2(PC_2(X, Y))$. This completes the proof of Case~$(iii)$.

We now consider Case~$(iv)$. By definition of the max-length $l_{X \times Y}(\s,\t)$ and the sum metric $d_{X \times Y} = d_X + d_Y$, we have in particular
\begin{align*}
d_X(x_0,x_2) + d_Y(y_0,y_2) &= d_{X \times Y}((x_0,y_0),(x_2,y_2)) \leq l_{X \times Y}(\s,\t),\\
d_X(x_1,x_2) + d_Y(y_1,y_2) &= d_{X \times Y}((x_1,y_1),(x_2,y_2)) \leq l_{X \times Y}(\s,\t).
\end{align*}
It follows that either
\begin{equation} \label{eqn:newside}
d(x_0,x_2) + d(y_1,y_2) \leq l_{X \times Y}(\s,\t)
\end{equation}
or
$$d(x_1,x_2) + d(y_0,y_2) \leq l_{X \times Y}(\s,\t)$$
holds. Without loss of generality, assume \eqref{eqn:newside} holds.

Consider the $3$-simplices
\begin{align*}
\s' := (x_0, x_0, x_1, x_2) \in {\bf X}_3,\\
\t' := (y_1, y_0, y_1, y_2) \in {\bf Y}_3.
\end{align*}
The faces of the $3$-simplex $(\s',\t') \in {\bf X}_3 \times {\bf Y}_3$ are
\begin{align*}
f_0 := d_0(\s',\t') &= (\s,\t),\\
f_1 := d_1(\s',\t') &= \{(x_0,y_1),(x_1,y_1),(x_2,y_2)\},\\
f_2 := d_2(\s',\t') &= \{(x_0,y_1),(x_0,y_0),(x_2,y_2)\},\\
f_3 := d_3(\s',\t') &= \{(x_0,y_1),(x_0,y_0),(x_1,y_1)\}.
\end{align*}
By \eqref{eqn:newside}, an examination of all edge lengths shows that
$$l_{X \times Y}(\s',\t') = l_{X \times Y}(\s,\t)$$
holds. Hence by definition of the differential $d$ on $PC_*(X \times Y)$ we have
$$d(\langle \s', \t' \rangle) = \langle \s, \t \rangle + \sum_{i=1}^3 (-1)^i \cdot T^{l_{X \times Y}(\s,\t) - l_{X \times Y}(f_i)} \cdot \langle f_i \rangle.$$
Observe that, up to a permutation of the vertices and interchanging the roles of $x$ and $y$, the simplices $f_1,f_2$ and $f_3$ are all of type $(i),(ii),$ or $(iii)$. So, the lemma says that for $i = 1,2,3,$ there exists $\alpha_i \in PC_3(X \times Y)$ such that $d(\alpha_i) = \langle f_i \rangle + r_i$ for some $r_i \in \i_2(PC_2(X, Y)).$ We claim that
$$\alpha := \langle \s', \t' \rangle + \sum_{i=1}^3 (-1)^{i+1} \cdot T^{l_{X \times Y}(\s,\t) - l_{X \times Y}(f_i)} \cdot \alpha_i$$
satisfies the required conditions; indeed,
\begin{align*}
d(\alpha) &= \langle \s, \t \rangle + \sum_{i=1}^3 \bigg [ (-1)^i \cdot T^{l_{X \times Y}(\s,\t) - l_{X \times Y}(f_i)} \cdot \langle f_i \rangle\\
&\,\,\,\,\,\,\,\,\,\,\,\,+ (-1)^{i+1} \cdot T^{l_{X \times Y}(\s,\t) - l_{X \times Y}(f_i)} \cdot d(\alpha_i) \bigg ]\\
&= \langle \s, \t \rangle + \sum_{i=1}^3 (-1)^i \cdot T^{l_{X \times Y}(\s,\t) - l_{X \times Y}(f_i)} \cdot ( \langle f_i \rangle - d(\alpha_i))\\
&= \langle \s, \t \rangle + \sum_{i=1}^3 (-1)^i \cdot T^{l_{X \times Y}(\s,\t) - l_{X \times Y}(f_i)} \cdot ( - \, r_i).
\end{align*}
This completes the proof of Case~$(iv)$, and hence the lemma.
\end{proof}

\subsection{Interleaving distance} \label{sec:interleavingdistance}
We bound the interleaving distance between the module $PH_*(X,Y)$ defined in \eqref{eq:predictedmodule}, which we can compute using the split short exact sequence in Proposition~\ref{prp:kunnethdiagonal}, and the homology $PH_*(X \times Y)$ of the metric space $(X \times Y, d_X + d_Y)$ that we would like to compute; see Theorem~\ref{thm:interleavingdistance}. Leonid Polterovich suggested we look for a bound of this type.

Here we view $\k[\mathbb{R}_+]$-modules as persistence vector spaces $(t \mapsto PH^{(t)}_*(X,Y))$ and $(t \mapsto PH^{(t)}_*(X \times Y))$; see Remark~\ref{rmk:artinrees}. The interleaving distance is a quantitative measure of how far away the computable module $PH_*(X,Y)$ is from the true persistent homology $PH_*(X \times Y)$ of the product $(X \times Y, d_X + d_Y)$. The long exact sequence in Proposition~\ref{prp:les} can be viewed as an algebraic measure of the difference between these modules.

The general definition of interleaving distance between persistence vector spaces is as follows. The interleaving distance was introduced in \cite{chazal2009proximity}, and has since become a standard measure of distance between persistence vector spaces.
\begin{dfn} \label{dfn:interleavingdistance}
Consider persistence vector spaces $V = \{V_t\}_{t \in \mathbb{R}_+}$ and $W = \{W_t\}_{t \in \mathbb{R}_+}$ with structure maps $l_{t,t'} : V_t \rightarrow V_{t'}$ and $s_{t,t'} : W_t \rightarrow W_{t'}$, respectively, for $t \leq t'$.

For $\delta \geq 0$, we say that $V$ and $W$ are {\bf $\delta$-interleaved} if there exist linear maps
\begin{align*}
F_t &: V_t \rightarrow W_{t + \delta},\\
G_t &: W_t \rightarrow V_{t + \delta}
\end{align*}
such that, for all $t \leq t'$, we have
\begin{align*}
l_{t+\delta,t'+\delta} \circ G_t = G_{t'} \circ s_{t,t'},\,\,\,\,
s_{t+\delta,t'+\delta} \circ F_t = F_{t'} \circ l_{t,t'},
\end{align*}
and
\begin{align*}
G_{t+\delta} \circ F_t = l_{t,t+2\delta},\,\,\,\, F_{t+\delta} \circ G_t = s_{t,t+2\delta}.
\end{align*}

The {\bf interleaving distance} between $V$ and $W$ is the infimum over all $\delta \geq 0$ such that $V$ and $W$ are $\delta$-interleaved.
\end{dfn}

\begin{thm} \label{thm:interleavingdistance}
The interleaving distance between the persistence vector spaces $(t \mapsto PH_*^{(t)}(X,Y))$ and $(t \mapsto PH_*^{(t)}(X \times Y))$ is less than or equal to the minimum of the diameters
$$\min(\text{\rm diameter}(X),\text{\rm diameter}(Y)),$$
where the diameter is the maximum distance
$$\text{diameter}(X) = \max \{d_X(x_0,x_1) \,\, | \,\, x_0,x_1 \in X\}.$$
\end{thm}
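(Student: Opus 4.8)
The plan is to exhibit an explicit $\delta$-interleaving with $\delta = \min(\mathrm{diameter}(X), \mathrm{diameter}(Y))$, and without loss of generality assume $\delta = \mathrm{diameter}(Y)$. The key observation is the chain of filtration inequalities at the level of $n$-simplices $(\s,\t) \in {\bf X}_n \times {\bf Y}_n$: we already know $l_{X\times Y}(\s,\t) \le l_X(\s) + l_Y(\t)$ from \eqref{eq:filtrationinequality}, and in the other direction $l_X(\s) + l_Y(\t) \le l_{X\times Y}(\s,\t) + \mathrm{diameter}(Y)$, since $l_Y(\t) \le \mathrm{diameter}(Y)$ while $l_X(\s) \le l_{X\times Y}(\s,\t)$ (the max-length of a simplex in the product dominates the max-length of its $X$-projection, as $d_{X\times Y} = d_X + d_Y \ge d_X$). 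Thus on generators the degree shifts differ by at most $\delta$ in a controlled way, which is exactly what is needed to move back and forth between the two persistent chain complexes.

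Concretely, I would proceed as follows. First, recall from \eqref{eq:comparisonmap} that there is already a chain map $\i : PC_*(X,Y) \hookrightarrow PC_*(X\times Y)$ sending $\langle \s,\t\rangle \mapsto T^{l_X(\s)+l_Y(\t) - l_{X\times Y}(\s,\t)}\langle\s,\t\rangle$; passing to homology and then to the degree-$t$ parts, this gives the family of maps $F_t : PH^{(t)}_*(X,Y) \to PH^{(t)}_*(X\times Y)$ (here we may even take $\delta$-shift for free, composing with multiplication by $T^\delta$, since $\i_*$ already lands in the right place with no shift needed — so set $F_t$ to be $\i_*$ followed by $l_{t,t+\delta}$). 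Second, I would construct the reverse chain map $\j : PC_*(X\times Y) \to PC_*(X,Y)$ by $\langle\s,\t\rangle \mapsto T^{l_{X\times Y}(\s,\t) + \delta - l_X(\s) - l_Y(\t)}\langle\s,\t\rangle$; the exponent is $\ge 0$ precisely by the inequality $l_X(\s)+l_Y(\t) \le l_{X\times Y}(\s,\t)+\delta$ established above, and one checks it commutes with the alternating-face differentials because the face maps on both complexes are multiplication-by-$T$-to-the-filtration-drop on the same underlying simplicial set, and the filtration drops subtract out consistently. This $\j$ raises degree by $\delta$, so it induces $G_t : PH^{(t)}_*(X\times Y) \to PH^{(t+\delta)}_*(X,Y)$. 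Third, I would verify the four interleaving identities of Definition~\ref{dfn:interleavingdistance}: the two compatibility squares are immediate since $F$ and $G$ come from genuine chain maps of $\mathbb{R}_+$-graded complexes (hence of persistence vector spaces), and the two composition identities $G_{t+\delta}\circ F_t = l_{t,t+2\delta}$ and $F_{t+\delta}\circ G_t = s_{t,t+2\delta}$ follow because, on generators, $\j \circ \i$ and $\i \circ \j$ are each multiplication by $T^\delta$ — the exponents telescope: $(l_X+l_Y - l_{X\times Y}) + (l_{X\times Y} + \delta - l_X - l_Y) = \delta$.

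The main obstacle, and the only place requiring genuine care, is confirming that $\j$ is a chain map — i.e.\ that the exponent bookkeeping in $d\circ\j = \j\circ d$ works out, given that the face map $d_i$ on $PC_*(X\times Y)$ multiplies $\langle\s,\t\rangle$ by $T^{l_{X\times Y}(\s,\t) - l_{X\times Y}(d_i(\s,\t))}$ whereas on $PC_*(X,Y)$ it multiplies by $T^{(l_X+l_Y)(\s,\t) - (l_X+l_Y)(d_i(\s,\t))}$; one must check the total power of $T$ matches on both sides of the square, which it does since the $\delta$'s cancel and the remaining terms rearrange to the same value. A secondary technical point is that the interleaving must hold in each homological degree $n \ge 0$ simultaneously and uniformly, but this is automatic since $\i$ and $\j$ are chain maps defined in every degree, and the constant $\delta$ does not depend on $n$. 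Finally, by symmetry in $X$ and $Y$ one obtains the bound with $\delta = \mathrm{diameter}(X)$ as well, hence the interleaving distance is at most $\min(\mathrm{diameter}(X),\mathrm{diameter}(Y))$, as claimed.
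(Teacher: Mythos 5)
Your proof is correct and follows essentially the same route as the paper: both exploit the two filtration inequalities $l_{X\times Y}\le l_X+l_Y$ and $l_X+l_Y\le l_{X\times Y}+\delta$ to produce an explicit $\delta$-interleaving at the level of persistent chain complexes, from which the interleaving on homology follows. The only difference is bookkeeping: the paper works in the persistence-vector-space picture where $F^{(t)}$ and $G^{(t)}$ are literal inclusions $\langle\s,\t\rangle\mapsto\langle\s,\t\rangle$ (so the chain-map and compatibility checks are automatic), whereas you work in the graded $\k[\mathbb{R}_+]$-module picture with explicit $T$-power maps $\i$ and $\j$, which forces the chain-map verification for $\j$ that you correctly flag and which does indeed go through by the telescoping of exponents you describe. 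The paper also avoids your without-loss-of-generality step by observing directly that $l_X(\s)+l_Y(\t)-l_{X\times Y}(\s,\t)\le\min(l_X(\s),l_Y(\t))\le\min(\mathrm{diameter}(X),\mathrm{diameter}(Y))$.
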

\begin{proof}
Let $\delta := \min(\text{\rm diameter}(X),\text{\rm diameter}(Y))$ denote the desired bound on the interleaving distance. We must show that the persistence vector spaces $(t \mapsto PH_*^{(t)}(X,Y))$ and $(t \mapsto PH_*^{(t)}(X \times Y))$ are $\delta$-interleaved. Denote the structure maps on these persistence vector spaces by $\overline{l}_{t,t'} : PH_*^{(t)}(X,Y) \rightarrow PH_*^{(t')}(X,Y)$ and $\overline{s}_{t,t'} : PH_*^{(t)}(X \times Y) \rightarrow PH_*^{(t')}(X \times Y)$ for $t \leq t'$. Note that the structure maps $\overline{l}_{t,t'}$ and $\overline{s}_{t,t'}$ on homology are induced by the inclusions of chain complexes $l_{t,t'} : PC_*^{(t)}(X,Y) \rightarrow PC_*^{(t')}(X,Y)$ and $s_{t,t'} : PC_*^{(t)}(X \times Y) \rightarrow PC_*^{(t')}(X \times Y)$.

We claim it suffices to show that the persistence vector spaces $(t \mapsto PC_*^{(t)}(X,Y))$ and $(t \mapsto PC_*^{(t)}(X \times Y))$ are $\delta$-interleaved with respect to chain maps
\begin{align*}
F^{(t)} &: PC_*^{(t)}(X,Y) \rightarrow PC_*^{(t+\delta)}(X \times Y)\\
G^{(t)} &: PC_*^{(t)}(X \times Y) \rightarrow PC_*^{(t+\delta)}(X,Y).
\end{align*}
(The structure maps on these persistence vector spaces are the inclusions $l_{t,t'}$ and $s_{t,t'}$.) Indeed, since $F^{(t)}$ and $G^{(t)}$ are chain maps, they induce maps on homology $\overline{F}^{(t)} : PH_*^{(t)}(X,Y) \rightarrow PH_*^{(t+\delta)}(X \times Y)$ and $\overline{G}^{(t)} : PH_*^{(t)}(X \times Y) \rightarrow PH_*^{(t+\delta)}(X,Y)$, and so the required properties of a $\delta$-interleaving
$$\overline{G}^{(t+\delta)} \circ \overline{F}^{(t)} = \overline{l}_{t,t+2\delta}, \,\,\,\, \overline{F}^{(t+\delta)} \circ \overline{G}^{(t)} = \overline{s}_{t,t+2\delta},$$
$$\overline{l}_{t+\delta,t'+\delta} \circ \overline{G}^{(t)} = \overline{G}^{(t')} \circ \overline{s}_{t,t'},\,\,\,\,
\overline{s}_{t+\delta,t'+\delta} \circ \overline{F}^{(t)} = \overline{F}^{(t')} \circ \overline{l}_{t,t'},$$
follow from the corresponding properties on the chain level.

We now define the maps $F^{(t)}$ and $G^{(t)}$ and show that they form a $\delta$-interleaving on the chain level. By definition we have
\begin{align*}
PC_*^{(t)}(X,Y) = \bigoplus_{l_X(\s) + l_Y(\t) \leq t} \k \cdot \langle \s,\t \rangle,\\
PC_*^{(t)}(X \times Y) = \bigoplus_{l_{X \times Y}(\s,\t) \leq t} \k \cdot \langle \s,\t \rangle.
\end{align*}
Since we have $l_{X \times Y} \leq l_X + l_Y$ (see \eqref{eq:filtrationinequality}), it is immediate that $l_{X \times Y} \leq l_X + l_Y + \delta$ holds, and so we can define $F^{(t)}$ to be the natural inclusion that takes generators to generators $F^{(t)}(\langle \s,\t \rangle) = \langle \s,\t \rangle$.

Towards defining $G^{(t)}$, we first establish the bound
\begin{equation*} \label{eq:theotherbound}
l_{X \times Y}(\s,\t) \geq \max(l_X(\s), l_Y(\t)).
\end{equation*}
Given simplices $\s = (x_0,\ldots,x_n) \in {\bf X}_n$ and $\t = (y_0,\ldots,y_n) \in {\bf Y}_n$, there are integers $i,j \in \{0,\ldots,n\}$ such that $l_X(\s) = d_X(x_i,x_j)$, and similarly there are $i',j'$ such that $l_Y(\t) = d_X(y_{i'},y_{j'})$, by definition \eqref{eq:maxlength} of the filtration functions $l_X$ and $l_Y$. Hence by definition \eqref{eq:maxlength} of $l_{X \times Y}$ and the sum metric $d_{X \times Y} = d_X + d_Y$, we have
\begin{align*}
l_{X \times Y}(\s,\t) &= \max\{ d_{X \times Y}((x_a,y_a),(x_b,y_b)) \,\, | \,\, 0 \leq a,b \leq n \}\\
&= \max\{ d_{X}(x_a,x_b) + d_{Y}(y_a,y_b)  \,\, | \,\, 0 \leq a,b \leq n \}\\
&\geq \max\{d_X(x_i,x_j), d_Y(y_{i'},y_{j'})\}\\
&= \max(l_X(\s), l_Y(\t)),
\end{align*}
as claimed. From this, we derive the bound
\begin{align*}
l_X(\s) + l_Y(\t) - l_{X \times Y}(\s,\t) &\leq l_X(\s) + l_Y(\t) - \max(l_X(\s), l_Y(\t))\\
& = \min(l_X(\s), l_Y(\t))\\
&\leq \delta.
\end{align*}
Hence we have $l_X + l_Y \leq l_{X \times Y} + \delta$, and so we can define $G^{(t)}$ to be the natural inclusion $G^{(t)}(\langle \s,\t \rangle) = \langle \s,\t \rangle$.

It is immediate that the required properties of a $\delta$-interleaving hold, since the structure maps $l_{t,t'}$ and $s_{t,t'}$ as well as the interleaving maps $G^{(t)}$ and $F^{(t)}$ are all the natural inclusions of chain complexes defined on generators by $\langle \s,\t \rangle \mapsto \langle \s, \t \rangle.$ This completes the proof.
\end{proof}

\subsection{Example: The Hamming Cube $I^k$} \label{sec:hammingcube}

Let $I = \{0, 1\}$ denote the metric space consisting of $2$ points at distance $1$. Then for any integer $k \geq 1$, the Cartesian product $I^k$ with the sum metric is the \emph{Hamming $k$-cube}, consisting of all $k$-tuples of zeros and ones with distance given by the number of coordinates in which tuples differ.

The purpose of this section is to make the following computation using Theorem~\ref{thm:kunnethmain}. In particular, we carry out a technical computation of the nontrivial kernel of the surjection ${\i_2}_* : PH_2(X,Y) \rightarrow PH_2(X \times Y)$ in the case $X = I$ and $Y = I^{k-1},\, k \geq 3$.

\begin{prp} \label{prp:cube} For $k \geq 1$, the Hamming cube $I^k = \{0,1\}^k$ satisfies the following:
\begin{align*}
PH_0(I^k) &\cong (0,1)^{2^k - 1} \oplus (0,\infty),\\
PH_1(I^k) &\cong (1,2)^{k \cdot 2^{k-1} - (2^k - 1)},\\
PH_2(I^k) &= 0. \label{eq:ph2Ik}
\end{align*}
\end{prp}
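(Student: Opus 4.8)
The strategy is to induct on $k$ using the splitting $I^k = I \times I^{k-1}$ of the Hamming metric as a sum metric, feeding in Theorem~\ref{thm:kunnethmain}. First I would record the base cases $k=1,2$ by direct inspection: $I^1$ is two points at distance $1$, so its filtered simplicial set has $H_0$ jumping from rank $2$ to rank $1$ at $t=1$ (giving the stated $PH_0$) and no higher homology; for $I^2$ one checks directly that the Vietoris--Rips complex $V(I^2,t)$ is four points for $t<1$, a $4$-cycle for $1 \le t < 2$, and contractible for $t \ge 2$, yielding $PH_0(I^2) \cong (0,1)^3 \oplus (0,\infty)$, $PH_1(I^2) \cong (1,2)$, $PH_2(I^2)=0$. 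These agree with the claimed formulas at $k=1,2$ (note $k\cdot 2^{k-1}-(2^k-1)$ equals $0$ and $1$ there).

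\textbf{Inductive step.} Assume the formulas hold for $I^{k-1}$ with $k \ge 2$. Apply Theorem~\ref{thm:kunnethmain} with $X = I$ and $Y = I^{k-1}$. For $PH_0$: $PH_0(I) \cong (0,1) \oplus (0,\infty)$ and $PH_0(I^{k-1}) \cong (0,1)^{2^{k-1}-1} \oplus (0,\infty)$, and since $PH_1(I) = 0$ there are no Tor terms in dimension $0$, so $PH_0(I^k) \cong PH_0(I) \otimes_{\k[\R_+]} PH_0(I^{k-1})$; using the tensor formula $(a,b) \otimes (c,d) \cong (a+c,\min\{a+d,b+c\})$ from Proposition~\ref{prp:tensortorbars} with $a=c=0$ gives the expected count $(0,1)^{2^k-1} \oplus (0,\infty)$ after tallying the four types of products. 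For $PH_1$: the tensor part is $PH_0(I)\otimes PH_1(I^{k-1}) \oplus PH_1(I)\otimes PH_0(I^{k-1})$, and since $PH_1(I)=0$ the second summand vanishes; the Tor part in dimension $1$ is $\bigoplus_{i+j=0} Tor_1(PH_i(I),PH_j(I^{k-1})) = Tor_1(PH_0(I),PH_0(I^{k-1}))$. One computes these using \eqref{eq:tensorbars} and \eqref{eq:torbars}: each $(0,1)$ factor of $PH_0(I)$ against each $(0,1)$ factor of $PH_0(I^{k-1})$ contributes a $Tor_1$ bar $(1,2)$ (from $Tor_1((0,1),(0,1)) \cong (1,2)$), while $(0,1)\otimes(1,2)$-type products account for the tensor contribution from $PH_0(I)\otimes PH_1(I^{k-1})$ being $(1,2)$ bars. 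Adding up: $PH_1(I^{k-1})$ contributes $2$ copies of its $(k-1)2^{k-2}-(2^{k-1}-1)$ bars (tensored against the two summands of $PH_0(I)$, noting $(0,\infty)\otimes(1,2)\cong(1,2)$ and $(0,1)\otimes(1,2)\cong(1,2)$), and $Tor_1$ contributes $2^{k-1}-1$ bars of type $(1,2)$. A short arithmetic check confirms $2\bigl((k-1)2^{k-2}-(2^{k-1}-1)\bigr) + (2^{k-1}-1) = k\cdot 2^{k-1} - (2^k-1)$, so all $PH_1(I^k)$ bars are $(1,2)$ with the claimed multiplicity.

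\textbf{The dimension-2 statement and the main obstacle.} For $PH_2(I^k)$, Theorem~\ref{thm:kunnethmain} only gives a \emph{surjection} ${\i_2}_* : PH_2(I,I^{k-1}) \to PH_2(I^k)$, not an isomorphism, so I must (i) compute the source $PH_2(I,I^{k-1})$ via Proposition~\ref{prp:kunnethdiagonal}, and (ii) show the kernel of ${\i_2}_*$ is everything. For (i): the tensor part in dimension $2$ involves $PH_1(I)\otimes PH_1(I^{k-1})$, $PH_0(I)\otimes PH_2(I^{k-1})$, $PH_2(I)\otimes PH_0(I^{k-1})$ — all zero by the inductive hypothesis ($PH_2(I^{k-1})=0$, $PH_1(I)=PH_2(I)=0$) — while the $Tor_1$ part in dimension $2$ is $\bigoplus_{i+j=1}Tor_1(PH_i(I),PH_j(I^{k-1})) = Tor_1(PH_0(I),PH_1(I^{k-1})) \oplus Tor_1(PH_1(I),PH_0(I^{k-1}))$, and the second vanishes; the first is a sum of $Tor_1((0,1),(1,2))$ and $Tor_1((0,\infty),(1,2))$ terms. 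Using \eqref{eq:torbars}: $Tor_1((0,\infty),(c,d))=0$ since $(0,\infty)$ is free, and $Tor_1((0,1),(1,2)) \cong (\max\{0+2,1+1\},1+2) = (2,3)$. So $PH_2(I,I^{k-1}) \cong (2,3)^{N}$ where $N = (k-1)2^{k-2}-(2^{k-1}-1)$ is the number of $(1,2)$ bars in $PH_1(I^{k-1})$.

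The hard part — and where I expect to spend most of the work — is step (ii): proving $\ker({\i_2}_*) = PH_2(I,I^{k-1})$, equivalently that ${\i_2}_*$ is the zero map, equivalently (via the long exact sequence of Proposition~\ref{prp:les}) that the connecting map $\delta_3 : \overline{PH}_3(I,I^{k-1}) \to PH_2(I,I^{k-1})$ is surjective. This requires an explicit geometric/chain-level argument: for each generating $(2,3)$-bar of $PH_2(I,I^{k-1})$ — which comes from a specific $1$-cycle in $I^{k-1}$ together with the two points of $I$ — I would exhibit a $3$-chain in $PC_3(I^k)$ whose boundary, after projecting to the relative complex, hits the corresponding $2$-cycle, using the kinds of explicit degenerate-tetrahedron constructions appearing in Lemma~\ref{lem:surjective}. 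Concretely: a representative $2$-cycle in $PC_2(I, I^{k-1})$ built from a square $1$-cycle $(p_0,p_1,p_2,p_3)$ in $I^{k-1}$ and the edge of $I$ can be filled in $I^2 \times (\text{that square})$ inside $I^k$ because in the ambient sum metric $l_{X\times Y}$ the relevant triangle lengths drop, so the $2$-cycle, which is nonzero in $PH_2(I,I^{k-1})$ at scale just below $3$, already bounds in $PH_2(I^k)$ at that scale. Making this filling uniform over all $N$ generators (and checking it respects the grading so that the whole $(2,3)$-torsion is killed, not merely a sub-bar) is the technical crux; the cited Propositions~\ref{prp:ph1lemma} and~\ref{prp:ph2lemma} are presumably exactly the lemmas that carry out this bookkeeping, and I would invoke / prove them here. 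Once ${\i_2}_* = 0$ is established, surjectivity of ${\i_2}_*$ from Theorem~\ref{thm:kunnethmain} forces $PH_2(I^k) = 0$, completing the induction.
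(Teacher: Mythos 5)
Your handling of $PH_0$, $PH_1$, and the computation $PH_2(I,I^{k-1})\cong(2,3)^N$ is correct and follows the same Künneth-inductive route the paper uses; the bar arithmetic checks out. The real content of this proposition, however, is the statement $PH_2(I^k)=0$, and here your proposal has a genuine gap.

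You correctly reduce the problem to showing ${\i_2}_*=0$, but then sketch a direct chain-level filling argument: for each $(2,3)$-bar in $PH_2(I,I^{k-1})$ you would produce an explicit $3$-chain in $PC_3(I^k)$ whose boundary kills the corresponding $2$-cycle. This implicitly assumes you can choose bar decompositions of $PH_1(I^{k-1})$ and hence of $PH_2(I,I^{k-1})\cong Tor_1(PH_0(I),PH_1(I^{k-1}))$ whose generators are geometrically represented by ``square $1$-cycles'' arising from coordinate inclusions $I^2\hookrightarrow I^{k-1}$. That is \emph{not} automatic from the abstract classification into bars; it is a substantive structural theorem. The paper states and proves it as the second half of Proposition~\ref{prp:ph1lemma}: $PH_1(I^k)=\langle\bigcup_{\varphi\in C(2,k)}\varphi_*(PH_1(I^2))\rangle$, proved by an inductive topological argument (attaching disks along all coordinate $I^2$ inclusions and showing every loop in $V(I^k,1)$ becomes nullhomotopic). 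You acknowledge deferring to Propositions~\ref{prp:ph1lemma} and~\ref{prp:ph2lemma} as ``bookkeeping,'' but that generating statement is the crux, not bookkeeping; without it your plan cannot be carried out.

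There is also a strategic divergence worth noting. Once the generating statement for $PH_1$ is in hand, the paper's Proposition~\ref{prp:ph2lemma} does \emph{not} produce explicit fillings or analyze $\delta_3$. Instead it exploits naturality: under $\alpha$ the module $PH_2(I,I^{k-1})$ is identified with $(1,\infty)\otimes PH_1(I^{k-1})$, whose generators come from coordinate inclusions $\varphi:I^2\to I^{k-1}$; a commutative diagram in $I^3\to I^k$ then shows each generator factors through $PH_2(I^3)=0$, so ${\i_2}_*=0$. Your proposed route (show $\delta_3$ surjective by exhibiting chains as in Lemma~\ref{lem:surjective}) could in principle also work, but it is more delicate -- you would have to check the filling is compatible with the grading so that the entire $(2,3)$-torsion is killed, over all $N$ generators -- and it re-derives by hand what naturality gives for free. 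Also note the paper's base case for the $PH_2$ induction is $k\le 3$ (it needs $PH_2(I^3)=0$ explicitly); you list base cases $k=1,2$, which is insufficient for the naturality argument, though it might be enough for a working chain-level argument were one fully carried out.
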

\begin{proof}
See Proposition~\ref{prp:ph1lemma} and Proposition~\ref{prp:ph2lemma}.
\end{proof}

We introduce the following useful terminology for a standard coordinate embedding $I^r \hookrightarrow I^k$.

\begin{dfn} \label{dfn:coordinateinclusions}
Given $0 \leq r \leq k$, a point $\xi \in I^{k-r}$, and any choice of $r$ coordinates $1 \leq i_1 < i_2 < \cdots < i_r \leq k$ of $I^k$, there is an isometric embedding 
$$\varphi : I^r \rightarrow I^k,$$
where for $x \in I^r$ the $r$ chosen coordinates of $\varphi(x)$ are equal to $x$, i.e.
$$x = (x_1,\ldots,x_r) = (\varphi(x)_{i_1},\ldots, \varphi(x)_{i_r}),$$
and the other $k-r$ coordinates of $\varphi(x)$ are given by $\xi$. We call these maps $\varphi$ the {\bf coordinate inclusions of $I^r$ into $I^k$.}
\end{dfn}

For $r \leq k$, let $C(r,k)$ denote the set of coordinate inclusions $I^r \rightarrow I^k$.

\begin{prp} \label{prp:ph1lemma}
For $k \geq 1$, we have
\begin{align*}
PH_0(I^k) &\cong (0,1)^{2^k - 1} \oplus (0,\infty),\\
PH_1(I^k) &\cong (1,2)^{k \cdot 2^{k-1} - (2^k - 1)}.
\end{align*}
Moreover, $PH_1(I^k)$ is generated by the collection of images of the induced maps $PH_1(I^2) \rightarrow PH_1(I^k)$ of the coordinate inclusions $I^2 \rightarrow I^k$. Precisely, we have
$$PH_1(I^k) = \langle \bigcup_{\varphi \in C(2,k)} \varphi_*(PH_1(I^2)) \rangle.$$
\end{prp}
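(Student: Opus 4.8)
The plan is to prove Proposition~\ref{prp:ph1lemma} by induction on $k$, using the splitting $I^k = I^{k-1} \times I$ together with the low-dimensional K\"unneth Theorem~\ref{thm:kunnethmain} and the bar arithmetic of Proposition~\ref{prp:tensortorbars}. The base cases $k=1$ (where $I^1 = I$ has $PH_0 \cong (0,1) \oplus (0,\infty)$ and $PH_1 = 0$) and $k=2$ are direct: one computes the Vietoris--Rips persistent homology of $I$ and of the square $I^2$ by hand. For $k=2$, $V(I^2,t)$ is four points for $0 \le t < 1$, then a $4$-cycle (the boundary of a square) for $1 \le t < 2$, then a filled-in simplex (everything at distance $\le 2$) for $t \ge 2$, giving $PH_0(I^2) \cong (0,1)^3 \oplus (0,\infty)$ and $PH_1(I^2) \cong (1,2)$ — consistent with the formula since $k\cdot 2^{k-1} - (2^k-1) = 2\cdot 2 - 3 = 1$.

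For the inductive step, I would apply Theorem~\ref{thm:kunnethmain} with $X = I^{k-1}$ and $Y = I$. For $n=0$ the split short exact sequence gives $PH_0(I^k) \cong PH_0(I^{k-1}) \otimes PH_0(I)$ (the $Tor$ part vanishes as there is no $n=-1$ contribution), and using \eqref{eq:tensorbars} with the inductive formula for $PH_0(I^{k-1})$ and $PH_0(I) \cong (0,1) \oplus (0,\infty)$, one checks $(0,1)\otimes(0,1) \cong (0,1)$, $(0,1)\otimes(0,\infty)\cong(0,1)$, $(0,\infty)\otimes(0,\infty)\cong(0,\infty)$, so the number of finite bars doubles and adds one and there is a single infinite bar, yielding $(0,1)^{2^k-1}\oplus(0,\infty)$. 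For $n=1$, the sequence reads
\begin{align*}
0 &\rightarrow \big(PH_1(I^{k-1})\otimes PH_0(I)\big) \oplus \big(PH_0(I^{k-1})\otimes PH_1(I)\big) \rightarrow PH_1(I^k)\\
&\rightarrow Tor_1(PH_0(I^{k-1}), PH_0(I)) \rightarrow 0.
\end{align*}
Since $PH_1(I) = 0$, the second tensor summand vanishes; using \eqref{eq:tensorbars} the first summand is $(1,2)^{(k-1)2^{k-2}-(2^{k-1}-1)}$ copies of $(1,2)$ tensored with $(0,1)\oplus(0,\infty)$, which multiplies each $(1,2)$ bar into $(1,2)$ again but with multiplicity doubled (one copy from the $(0,1)$ factor, one from $(0,\infty)$). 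The $Tor$ term, by \eqref{eq:torbars}, computes $Tor_1((0,1),(0,1)) \cong (1,2)$ and $Tor_1$ of anything with an infinite bar vanishes, so it contributes $(1,2)^{(2^{k-1}-1)(2^{k-1}-1)}$... here I must be careful: $Tor_1(PH_0(I^{k-1}),PH_0(I))$ pairs the $2^{k-1}-1$ finite bars of $PH_0(I^{k-1})$ with the single finite bar of $PH_0(I)$, giving $(2^{k-1}-1)\cdot 1 = 2^{k-1}-1$ copies of $(1,2)$. Since $Ext$-type obstructions vanish (all bars are $(1,2)$, hence the extension splits over $\k[\mathbb{R}_+]$ by Proposition~\ref{prp:classification}), $PH_1(I^k) \cong (1,2)^{m}$ with $m = 2\big[(k-1)2^{k-2}-(2^{k-1}-1)\big] + (2^{k-1}-1) = k2^{k-1} - (2^k-1)$, which is the claimed count.

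For the final generation statement $PH_1(I^k) = \langle \bigcup_{\varphi \in C(2,k)} \varphi_*(PH_1(I^2))\rangle$, the key tool is the naturality of the short exact sequence in Theorem~\ref{thm:kunnethmain} with respect to distance non-increasing maps — in particular with respect to the coordinate inclusions $I^{k-1} \hookrightarrow I^k$ and $I \hookrightarrow I^k$. Inductively, $PH_1(I^{k-1})$ is generated by images of $PH_1(I^2)$ under coordinate inclusions $I^2 \hookrightarrow I^{k-1}$; composing with $I^{k-1}\hookrightarrow I^k$ shows the image of $PH_1(I^{k-1})\otimes PH_0(I)$ lies in the span of the $\varphi_*(PH_1(I^2))$. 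It remains to handle the $Tor_1$ summand that surjects off $PH_1(I^k)$: I would show each $Tor$ class is hit, modulo the tensor image, by a class coming from a coordinate-embedded $I^2$. Concretely, a finite bar of $PH_0(I^{k-1})$ corresponds (via the $PH_0$ computation, tracking the Artin--Rees dictionary of Remark~\ref{rmk:artinrees}) to a difference of two connected-component generators, i.e. to an edge in some coordinate direction of $I^{k-1}$; pairing it with the single edge of $I$ produces exactly the $PH_1$-generator of a coordinate-embedded square $I^2$ spanned by those two directions, and naturality identifies the connecting-map image with $\varphi_*$ of the generator of $PH_1(I^2)$. The main obstacle I anticipate is this last step: making the identification of $Tor$ classes with coordinate-square generators precise requires an explicit description of how the (non-natural) splitting of the K\"unneth sequence interacts with the coordinate inclusions, and I would circumvent it by working directly on chain complexes — exhibiting an explicit $1$-cycle in $PC_*(I^k)$ representing each required class, supported on a coordinate $2$-cube, rather than chasing the algebraic splitting. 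A secondary point to verify is that the counts in the surjectivity/spanning argument are tight (the listed generators are not merely a spanning set but account for the full rank $k2^{k-1}-(2^k-1)$), which follows by comparing dimensions once the spanning claim is established.
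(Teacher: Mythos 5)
Your computation of $PH_0(I^k)$ and of the bar count for $PH_1(I^k)$ matches the paper's, and the arithmetic checks out. The interesting difference is in the generation statement, where the paper and your proposal take genuinely different routes.

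The paper reduces to a single time slice: since all bars of $PH_1(I^k)$ are $(1,2)$, the module is generated in homogeneous degree $1$, so it suffices to show that $H_1(V(I^k,1))$ is spanned by images of the coordinate squares. The paper then argues \emph{topologically}: $V(I^k,1)$ is the $1$-skeleton of the cube (a graph with no $2$-simplices), and gluing a $2$-disk onto each coordinate square produces a space in which every loop of $V(I^k,1)$ is nullhomotopic, by an explicit homotopy using the inclusion $V(I^k,1)\subset[0,1]\times V(I^{k-1},1)$ and induction. Your route stays algebraic, pushing the coordinate inclusions through the natural K\"unneth sequence. That route can indeed be made to work, and it has the virtue of avoiding any geometric realization argument.

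However, your stated obstacle --- that ``making the identification of $Tor$ classes with coordinate-square generators precise requires an explicit description of how the (non-natural) splitting interacts with the coordinate inclusions'' --- is a red herring, and the fallback you propose (producing explicit chain-level representatives) is unnecessary. You do not need the splitting at all. Theorem~\ref{thm:kunnethmain} gives a \emph{natural short exact sequence}; naturality of the quotient map $q\colon PH_1(I^k)\to Tor_1(PH_0(I^{k-1}),PH_0(I))$ alone suffices. To show a family of subspaces spans $PH_1(I^k)$ it is enough to check two things against this exact sequence: (a) their $q$-images span the $Tor$ term, and (b) they contain the kernel, i.e.\ the image of the tensor term. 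For (a): a coordinate inclusion $\psi\colon I\hookrightarrow I^{k-1}$ yields $\varphi=\psi\times\mathrm{id}\colon I^2\hookrightarrow I^k$, and naturality of $q$ sends $q(\varphi_*PH_1(I^2))$ to the image of $Tor_1(PH_0(I),PH_0(I))\xrightarrow{\psi_*\otimes\mathrm{id}}Tor_1(PH_0(I^{k-1}),PH_0(I))$; since the classes $[\psi(1)]-[\psi(0)]=[x+e_i]-[x]$ (over all $\psi$) span the finite part of $PH_0(I^{k-1})$ (the cube graph is connected), these images span all of $Tor_1$. For (b): decompose $PH_0(I)\cong(0,1)\oplus(0,\infty)$; the $(0,\infty)$ factor gives the $0$-slice inclusion $I^{k-1}\hookrightarrow I^k$, whose image lands in the coordinate-square span by the inductive hypothesis, and the $(0,1)$ factor is $[1]-[0]$, so its cross with $\alpha\in PH_1(I^{k-1})$ is the difference of the $1$-slice and $0$-slice images of $\alpha$, again in the coordinate-square span by induction. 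This closes the argument with no appeal to a choice of splitting and no explicit $1$-cycles. Your final remark about a ``dimension count'' being a secondary point to verify is also superfluous: the generation statement asserts spanning only, not a basis, so no rank comparison is needed once (a) and (b) are in hand.
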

\begin{proof}
Recall from Remark~\ref{rmk:vietorisrips} that for $t \in \mathbb{R}_+$ there is an isomorphism $PH^{(t)}_*(I^k) \cong H_*(V(I^k,t))$, where $PH^{(t)}_*(I^k)$ is the homogeneous degree $t$ persistent homology and $V(I^k,t)$ is the Vietoris-Rips complex of $I^k$ at time $t$.

A simple calculation of $PH_0(I^k)$ follows from the observation that $V(I^k,0)$ is a simplicial complex consisting of $2^k$ points and no higher dimensional simplices, and $V(I^k,1)$ is connected. Alternatively, $PH_0(I^k)$ can be computed inductively using the K\"unneth Theorem~\ref{thm:kunnethmain} for metric spaces applied to $I^k = I \times I^{k-1}$, together with the observation $PH_0(I) \cong (0,1) \oplus (0,\infty)$ and the formulas for tensor product and $Tor$ of bars in Proposition~\ref{prp:tensortorbars}.

We now compute $PH_1(I^k)$. The claimed formula holds for $k = 1$ since $PH_1(I) = 0$. We now prove the formula for $k > 1$, assuming inductively that it holds for $k -1$. Indeed, by Theorem~\ref{thm:kunnethmain} applied to $X = I$ and $Y = I^{k-1}$ together with the formulas for tensor product and $Tor$ of bars in Proposition~\ref{prp:tensortorbars}, we compute
\begin{align*}
PH_1(I^k) &\cong (PH_0(I) \otimes PH_1(I^{k-1})) \oplus Tor_1(PH_0(I),PH_0(I^{k-1}))\\
&\cong (1,2)^{(k-1)\cdot 2^{k-1} - (2^k -2)} \oplus (1,2)^{2^{k-1}-1}\\
&= (1,2)^{k \cdot 2^{k-1} - (2^k - 1)},
\end{align*}
as claimed.

It remains to prove that $PH_1(I^k)$ is generated by the images of the induced maps of the coordinate inclusions $I^2 \rightarrow I^k$. From the computed formula for $PH_1(I^k)$, we see that it is generated by homogeneous degree $1$ elements. Hence it suffices to show that $PH_1^{(1)}(I^k)$ is generated by the images of the induced maps $PH_1^{(1)}(I^2) \rightarrow PH_1^{(1)}(I^k)$. This is the same as showing that $H_*(V(I^k,1))$ is generated by the images of the induced maps $H_*(V(I^2,1)) \rightarrow H_*(V(I^k,1))$.

The simplicial complex $V(I^k,1)$ is easily describable. The $0$-simplices are all the points in $I^k$. The $1$-simplices are all pairs $\{x,y\}$ such that $d_{I^k}(x,y) = 1$. There is a canonical identification $I^k \cong \mathbb{F}_2^k$ where $\mathbb{F}_2 = \{0,1\}$ is the finite field with $2$ elements. Under this identification, the set of $1$-simplices in $V(I^k,1)$ is given by all pairs $\{x,x+e_i\}$ for $x \in I^k$ and $1 \leq i \leq k$, where $e_i \in \mathbb{F}_2^k$ is the $i$-th standard basis vector. We claim that there are no higher dimensional simplices in $V(I^k,1)$. Indeed, suppose for contradiction that $\{x,x+e_i,y_1,\ldots,y_n\} \subset I^k$ is a simplex in $V(I^k,1)$ for some $n \geq 1$. Then $d_{I^k}(y_1,x) = 1$ implies $y_1 = x+e_j$ for some standard basis vector $e_j$. If $j = i$ then $y_1 = x+e_i$, which is not allowed, and if $j \neq i$ then $d_{I^k}(y_1,x+e_i) = 2$, which is not allowed in $V(I^k,1)$.

We identify the Vietoris-Rips complexes with their geometric realizations. The complex $V(I^2,1)$ is homeomorphic to $S^1$. So, to show that $H_1(V(I^k,1))$ is generated by the induced maps $H_1(V(I^2,1)) \rightarrow H_1(V(I^k,1))$ of the coordinate inclusions $I^2 \rightarrow I^k$, it suffices to show that if we glue a disk $D^2$ along the induced map on Vietoris-Rips $\varphi_V : \partial D^2 \cong V(I^2,1) \rightarrow V(I^k,1)$ for every $\varphi \in C(2,k)$, then every loop in $V(I^k,1)$ is nullhomotopic in this new space. Precisely, we form the space
$$X(k) := V(I^k,1) \cup \bigcup_{\varphi \in C(2,k)} D^2,$$
where for each $\varphi \in C(2,k)$ we identify $\partial D^2 \cong V(I^2,1)$ with its image $\varphi_V(V(I^2,1)) \subset V(I^k,1).$

It suffices to show that every loop in $V(I^k,1)$ is nullhomotopic in $X(k)$. This clearly holds for $k = 1$ since $X(k) = V(I^k,1) = [0,1]$ is contractible.

We proceed by induction. Let $k > 1$ and assume that every loop in $V(I^{k-1},1)$ is nullhomotopic in $X(k-1)$.

Observe from the above description of the $1$-simplices in $V(I^k,1)$ that we have an inclusion
\begin{align*}
V(I^k,1) &= (\{0\} \times V(I^{k-1},1)) \cup ([0,1] \times I^{k-1}) \cup (\{1\} \times V(I^{k-1},1))\\
&\subset  [0,1] \times V(I^{k-1},1).
\end{align*}
Moreover, there is an inclusion
$$[0,1] \times V(I^{k-1},1) \hookrightarrow X(k)$$
defined in the following way. Every $1$-simplex $\s$ in $V(I^{k-1},1)$ has boundary points $\{x,x+e_i\}$ for some $x \in I^{k-1}$ and standard basis vector $e_i \in I^{k-1}$, and so together with the first coordinate of $I^k = I \times I^{k-1}$ the simplex $\s$ determines a coordinate inclusion $\varphi_{\s} : I^2 \rightarrow I^k$, where the $2$ chosen coordinates of $I^k$ are the first one and the coordinate corresponding to $e_i \in I^{k-1}$, and the $\xi \in I^{k-2}$ is given by $x$ in the other coordinates. Then the inclusion $[0,1] \times V(I^{k-1},1) \hookrightarrow X(k)$ is defined by identifying $[0,1] \times \s$ with the copy of $D^2$ corresponding to $\varphi_{\s}$.

Consider a loop $S^1 \rightarrow V(I^k,1)$. This loop is homotopic in ${[0,1] \times V(I^{k-1},1)}$ to a loop in $\{0\} \times V(I^{k-1},1)$. After gluing on the disks corresponding to the coordinate inclusions $I^2 \rightarrow I^k$ that land in $\{0\} \times I^{k-1}$, our loop is in the space $\{0\} \times X(k-1)$, where it is nullhomotopic by the induction hypothesis. This completes the proof.
\end{proof}

\begin{prp} \label{prp:ph2lemma}
For $k \geq 1$, we have $PH_2(I^k) = 0$.
\end{prp}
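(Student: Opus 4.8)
The plan is to prove $PH_2(I^k) = 0$ by induction on $k$, the crux being to show that the surjection $(\iota_2)_* \colon PH_2(I,I^{k-1}) \to PH_2(I^k)$ supplied by Theorem~\ref{thm:kunnethmain} for the splitting $I^k = I \times I^{k-1}$ (sum metric) is in fact the zero map. The base cases $k = 1,2$ are immediate: for those values $V(I^k,t)$ is at most $1$-dimensional when $t < 2$ and a contractible full simplex when $t \geq 2$, so $H_2(V(I^k,t)) = 0$ at every scale and $PH_2(I^k) = 0$ by Remark~\ref{rmk:vietorisrips}. For $k \geq 3$, assuming inductively that $PH_2(I^{k-1}) = 0$, all tensor summands $\bigoplus_{i+j=2} PH_i(I) \otimes PH_j(I^{k-1})$ vanish (using also $PH_j(I) = 0$ for $j \geq 1$), so Proposition~\ref{prp:kunnethdiagonal}, the identification $PH_0(I) \cong (0,1)\oplus(0,\infty)$, and Proposition~\ref{prp:tensortorbars} give the ``prediction''
$$PH_2(I,I^{k-1}) \cong Tor_1\big((0,1),\,PH_1(I^{k-1})\big) \cong (2,3)^{m}, \qquad m = (k-1)\,2^{k-2} - (2^{k-1}-1),$$
with the bar count taken from Proposition~\ref{prp:ph1lemma}. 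So for $k \geq 3$ the task is to kill this nonzero module under $(\iota_2)_*$.

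For the inductive step proper I take $k \geq 4$, so that $PH_2(I^3) = 0$ is available by induction (the case $k = 3$ is treated below). The engine is the generation statement of Proposition~\ref{prp:ph1lemma}: the coordinate inclusions $\varphi \colon I^2 \hookrightarrow I^{k-1}$ give a surjection $\bigoplus_{\varphi \in C(2,k-1)} PH_1(I^2) \twoheadrightarrow PH_1(I^{k-1})$, i.e.\ a surjection $(1,2)^{N} \twoheadrightarrow (1,2)^{m}$ with $N = |C(2,k-1)|$; since this is a graded map and each homogeneous degree $t \in [1,2)$ carries the same surjection of $\k$-vector spaces, its kernel is again a power $(1,2)^{N-m}$. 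Feeding the corresponding short exact sequence into the $Tor$ long exact sequence (with $Tor_2 = 0$ by Proposition~\ref{prp:tensortorbars}), the map $\bigoplus_{\varphi} Tor_1((0,1),PH_1(I^2)) \to Tor_1((0,1),PH_1(I^{k-1}))$ remains surjective: its cokernel embeds in $(0,1)\otimes(1,2)^{N-m} \cong (1,2)^{N-m}$, concentrated in degrees $< 2$, which admits no nonzero map from the target $(2,3)^{m}$, concentrated in degrees $\geq 2$. Hence, by naturality of the Künneth sequence, $PH_2(I,I^{k-1})$ is generated by the images of the maps $(\mathrm{id}_I \times \varphi)_* \colon PH_2(I,I^2) \to PH_2(I,I^{k-1})$. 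Finally, naturality of the comparison map (the $\iota$-ladder of Proposition~\ref{prp:les}) gives $(\iota_2)_* \circ (\mathrm{id}_I \times \varphi)_* = (\mathrm{id}_I \times \varphi)_* \circ (\iota_2)_*$, where the right-hand $(\iota_2)_*$ lands in $PH_2(I \times I^2) = PH_2(I^3) = 0$; so $(\iota_2)_*$ annihilates a generating set of $PH_2(I,I^{k-1})$, hence is zero, and since it is surjective we get $PH_2(I^k) = 0$.

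It remains to settle $k = 3$, which anchors the induction. The displayed prediction, now with $PH_2(I^2) = 0$, shows $PH_2(I^3)$ is a graded quotient of $PH_2(I,I^2) \cong (2,3)$, hence is concentrated in degrees $[2,3)$; such a quotient has non-increasing dimension in the grading degree, so it vanishes if and only if its degree-$2$ part does, and by Remark~\ref{rmk:vietorisrips} that part is $H_2(V(I^3,2))$. Now $V(I^3,2)$ is precisely the complex of subsets of $\{0,1\}^3$ that contain no antipodal (distance-$3$) pair, i.e.\ the independence complex of the perfect matching formed by the four antipodal pairs; this is a four-fold join of copies of $S^0$, so $V(I^3,2) \simeq S^3$ and in particular $H_2(V(I^3,2)) = 0$. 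I expect the main obstacle to be the generation step of the second paragraph --- tracking, through the $Tor$ long exact sequence and the naturality of the Künneth sequence, that every class of $PH_2(I,I^{k-1})$ is supported on a coordinate sub-cube $I^3 = I \times I^2$. This is essentially the ``computation of the nontrivial kernel of ${\iota_2}_* \colon PH_2(X,Y) \to PH_2(X \times Y)$'' promised in the introduction; one could instead make it concrete by exhibiting explicit bounding $3$-chains in $PC_3(I^k)$, in the spirit of Lemma~\ref{lem:surjective}, but the route above is shorter.
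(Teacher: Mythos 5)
Your proof is correct and follows essentially the same strategy as the paper's: induction on $k$ anchored at $k\le 3$, reduction to coordinate sub-cubes $I^3 = I\times I^2 \hookrightarrow I^k$ via the generation statement of Proposition~\ref{prp:ph1lemma} together with naturality of the K\"unneth sequence and of the long exact sequence in Proposition~\ref{prp:les}, and the vanishing $PH_2(I^3)=0$. The two places where your write-up differs in detail both check out: for the joint surjectivity of $\bigoplus_{\varphi}Tor_1\big((0,1),PH_1(I^2)\big)\to Tor_1\big((0,1),PH_1(I^{k-1})\big)$ you invoke the $Tor$ long exact sequence plus a degree-support argument (the cokernel is simultaneously a quotient of $(2,3)^m$, supported in $[2,3)$, and a submodule of $(1,2)^{N-m}$, supported in $[1,2)$, hence zero), whereas the paper instead identifies $Tor_1\big((0,1),PH_1(I^{k-1})\big)\cong(1,\infty)\otimes PH_1(I^{k-1})$ from the free resolution of $(0,1)$ and then uses right-exactness of the tensor product; and for the $k=3$ anchor you supply an explicit argument (where the paper only cites a direct check) by recognizing $V(I^3,2)$ as the independence complex of the antipodal matching on $\{0,1\}^3$, a four-fold join of copies of $S^0$ and hence homeomorphic to $S^3$, so that $H_2(V(I^3,2))=0$, which combined with the fact that $PH_2(I^3)$ is a quotient of $(2,3)$ yields $PH_2(I^3)=0$.
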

\begin{proof}
By direct computation, one can check that the result holds for $k \leq 3$. We now proceed by induction. Let $k > 3$ and assume $PH_2(I^{k-1}) = 0.$ By Theorem~\ref{thm:kunnethmain}, the map ${\i_2}_* : PH_2(I,I^{k-1}) \rightarrow PH_2(I^k)$ is surjective, so it suffices to show that ${\i_2}_* = 0$.

Let $\varphi : I^2 \rightarrow I^{k-1}$ be any coordinate inclusion. Note that $PH_n(I) = 0$ for $n > 0$. Then by naturality of the short exact sequence \eqref{eq:classicalkunnethses} and the long exact sequence from Proposition~\ref{prp:les}, we have a commutative diagram
\[
  \begin{tikzcd}
    PH_2(I^3) \arrow{d} & \arrow{l}{{\i_2}_*} PH_2(I,I^2) \arrow{r}{\alpha}\arrow{d} & Tor_1(PH_0(I),PH_1(I^2)) \arrow{d}{{id_I}_* \otimes \varphi_*} \\
    PH_2(I^k) & \arrow{l}{{\i_2}_*} PH_2(I,I^{k-1}) \arrow{r}{\alpha} & Tor_1(PH_0(I),PH_1(I^{k-1})).  \end{tikzcd}
\]
Moreover, since $PH_n(I) = 0$ for $n > 0$ and since by induction we have $PH_2(I^{k-1}) = 0$, it follows from \eqref{eq:classicalkunnethses} that both maps labeled $\alpha$ are isomorphisms.

Since $PH_0(I) \cong (0,1) \oplus (0,\infty)$, by Proposition~\ref{prp:tensortorbars} we have 
$$Tor_1(PH_0(I),PH_1(I^{k-1})) \cong Tor_1((0,1),PH_1(I^{k-1})).$$
A free resolution of the bar $(0,1)$ is given by
$$0 \rightarrow (1,\infty) \rightarrow (0,\infty) \rightarrow (0,1) \rightarrow 0.$$
Now consider the bar $(1,2)$. Dropping the $(0,1)$ term from the free resolution and tensoring with $(1,2)$ yields (by Proposition~\ref{prp:tensortorbars}) the sequence
$$0 \rightarrow (2,3) \rightarrow (1,2) \rightarrow 0.$$
Observe in particular that the map $(2,3) \rightarrow (1,2)$ is necessarily zero. Hence, since $PH_1(I^{k-1})$ is a direct sum of bars $(1,2)$ by Proposition~\ref{prp:ph1lemma}, we have
$$Tor_1((0,1),PH_1(I^{k-1})) = (1,\infty) \otimes PH_1(I^{k-1}).$$
Hence we have shown that we have an isomorphism
$$
Tor_1(PH_0(I),PH_1(I^{k-1}))  \cong (1,\infty) \otimes PH_1(I^{k-1}).
$$
Under this isomorphism, the map ${id_I}_* \otimes \varphi_*$ in the commutative diagram above corresponds to the map $id_{(1,\infty)} \otimes \varphi_*$ in the following commutative diagram, where we have also used $PH_2(I^3) = 0$.
\[
  \begin{tikzcd}
    0 \arrow{d} & \arrow{l} PH_2(I,I^2) \arrow{r}{\cong}\arrow{d} & (1,\infty) \otimes PH_1(I^2) \arrow{d}{id_{(1,\infty)} \otimes \varphi_*} \\
    PH_2(I^k) & \arrow{l}{{\i_2}_*} PH_2(I,I^{k-1}) \arrow{r}{\cong} & (1,\infty) \otimes PH_1(I^{k-1}).  \end{tikzcd}
\]

Now, there is a generating set of $PH_2(I,I^{k-1})$ corresponding under the isomorphism with $(1,\infty) \otimes PH_1(I^{k-1})$ to the generating set of $PH_1(I^{k-1})$ described in Proposition~\ref{prp:ph1lemma} (tensored with the generator of $(1,\infty)$), which consists of the images of the induced maps $\varphi_* : PH_1(I^2) \rightarrow PH_1(I^{k-1})$ of the coordinate inclusions $\varphi : I^2 \rightarrow I^{k-1}$. For any such coordinate inclusion $\varphi$, commutativity of the above diagram shows that the image of $(1,\infty) \otimes PH_1(I^2)$ in $PH_2(I^k)$ is $0$. Hence ${\i_2}_* = 0$.
\end{proof}

\section{Appendix: simplicial sets} \label{sec:simplicialsets}

We review basic notions about simplicial sets.

Let $\ul{\Delta}$ denote the \emph{simplex category}. It has objects $[n] = \{0,\ldots,n\}$ for $n \geq 0$ and morphisms the weakly order preserving maps, i.e. $\varphi : [m] \rightarrow [n]$ is a morphism in $\ul{\Delta}$ if it satisfies $\varphi(0) \leq \varphi(1) \leq \cdots \leq \varphi(m).$

A \emph{simplicial set} is a contravariant functor ${\bf X} : \ul{\Delta} \rightarrow Set$. Concretely, ${\bf X}$ consists of a set ${\bf X}_n$ for each $n \geq 0$ together with a set map ${\bf X}_{\varphi} : {\bf X}_n \rightarrow {\bf X}_m$ for each morphism $\varphi : [m] \rightarrow [n]$ in $\ul{\Delta}$ such that the collection of these maps satisfy the usual functorial properties.

The category of simplicial sets $sSet$ has simplicial sets as objects and natural transformations as morphisms. The morphisms are called \emph{simplicial maps}.

There are special morphisms in $\ul{\Delta}$ called the \emph{coface maps}
\begin{align*}
d^{\Delta}_i : [n] &\rightarrow [n+1]\\
j &\mapsto j \text{ if } j < i,\\
j &\mapsto j+1 \text{ if } j \geq i
\end{align*}
for $0 \leq i \leq n+1$ and the \emph{codegeneracy maps}
\begin{align*}
s^{\Delta}_i : [n] &\rightarrow [n-1]\\
j &\mapsto j \text{ if } j \leq i,\\
j &\mapsto j-1 \text{ if } j > i.
\end{align*}
for $0 \leq i \leq n-1$. These generate the morphisms in $\ul{\Delta}$ in the sense that any morphism can be written as the composition of coface and codegeneracy maps.

Given ${\bf X} \in sSet$, the maps ${\bf X}_{d^{\Delta}_i}$ are called \emph{face maps} and the maps ${\bf X}_{s^{\Delta}_i}$ are called \emph{degeneracy maps}. Often we denote ${\bf X}_{d^{\Delta}_i}$ by $d_i$ and ${\bf X}_{s^{\Delta}_i}$ by $s_i$ when the simplicial set ${\bf X}$ is implicit.

\bibliographystyle{amsplain}
\bibliography{../../../references}
\end{document}